\newtheorem{theorem}{Theorem}[section]
\newtheorem{proposition}{Proposition}[section]
\newtheorem{lemma}{Lemma}[section]
\newtheorem{definition}{Definition}[section]
\newtheorem{corollary}{Corollary}[section]
\theoremstyle{remark}
\newtheorem{remark}{Remark}[section]
\numberwithin{equation}{section}
\newfont{\sBlackboard}{msbm10 scaled 900}
\newcommand{\mylabel}[1]{\label{#1}
            \ifx\undefined\stillediting
            \else \fbox{$#1$}\fi }
\newcommand{\BE}{\begin{equation}}
\newcommand{\EEQ}{\end{equation}}
\newcommand{\rfb}[1]{\mbox{\rm
   (\ref{#1})}\ifx\undefined\stillediting\else:\fbox{$#1$}\fi}
\newfont{\Blackboard}{msbm10 scaled 1200}
\newfont{\roma}{cmr10 scaled 1200}
\def\CC{\rm \hbox{C\kern-.56em\raise.4ex
         \hbox{$\scriptscriptstyle |$}\kern+0.5 em }}
\newcommand{\be}{\begin{equation}}
\newcommand{\ee}{\end{equation}}
\newcommand{\beq}{\begin{eqnarray}}
\newcommand{\eeq}{\end{eqnarray}}
\newcommand{\beqs}{\begin{eqnarray*}}
\newcommand{\eeqs}{\end{eqnarray*}}
\newcommand{\bt}{\begin{Theorem}}
\newcommand{\et}{\end{Theorem}}
\newcommand{\br}{\begin{remark}}
\newcommand{\er}{\end{remark}}
\newcommand{\bc}{\begin{Corollary}}
\newcommand{\ec}{\end{Corollary}}
\newcommand{\el}{\end{Lemma}}
\newcommand{\bd}{\begin{definition}}
\newcommand{\ed}{\end{definition}}
\newcommand{\mm}    {{\hbox{\hskip 0.5pt}}}
\newcommand{\bluff} {{\hbox{\raise 15pt \hbox{\mm}}}}
\def\section{\@startsection {section}{1}{\z@}{-3.5ex plus -1ex minus
    -.2ex}{2.3ex plus .2ex}{\large\bf}}
\def\be{\begin{equation}}
\def\ee{\end{equation}}
\def\ds{\displaystyle}
\newcommand{\mathcurl}{\mathscr}
\newcommand{\Con}{\ensuremath{\mathcurl C}}
\newcommand{\Cinf}{\ensuremath{\Con^\infty}}
\newcommand{\imp}{\Rightarrow}
\begin{document}

\thispagestyle{empty}
\title[Magnetic Schr\"odinger equations]{Comments on observability and stabilization of magnetic Schr\"odinger equations}


\date\today

\author[Ka\"{\i}s Ammari]{Ka\"{\i}s AMMARI}
\address{UR Analysis and Control of Pde, UR 13ES64, Department of Mathematics, Faculty of Sciences of Monastir, University of Monastir, 5019 Monastir, Tunisia and LMV-UVSQ/Universit\'e de Paris-Saclay, France} \email{kais.ammari@fsm.rnu.tn} 
 
\author[Mourad Choulli]{Mourad Choulli}
\address{Universit\'e de Lorraine, 34 cours L\'eopold, 54052 Nancy cedex, France}
\email{mourad.choulli@univ-lorraine.fr}
 
\author[Luc Robbiano]{Luc Robbiano}
\address{Laboratoire de Math\'ematiques, Universit\'e de Versailles Saint-Quentin en Yvelines, 78035 Versailles, France}
\email{luc.robbiano@uvsq.fr}

\begin{abstract}
We are mainly interested in extending the known results on observability inequalities and stabilization for the Schr\"odinger equation to the magnetic Schr\"odinger equation. That is in presence of a magnetic potential. We establish observability inequalities and exponential stabilization by extending the usual multiplier method, under the same geometric condition  that needed for the Schr\"odinger equation. We also prove, with the help of elliptic Carleman inequalities, logarithmic stabilization results through a resolvent estimate. Although the approach is classical, these results on logarithmic stabilization seem to be new even for the Schr\"odinger equation.
\end{abstract}

\maketitle

\tableofcontents

\section{Introduction}
\setcounter{equation}{0}

Prior to give the precise statement of our main results we need to consider IBVP's for magnetic Sch\"odinger equation that we are interested in.
 For this, we firstly give the main notations and the preliminary results that we will use frequently in this text
 
\subsection{Notations and preliminaries}

Denote by $dx$ the Lebesgue measure on $\mathbb{R}^d$, $d\ge 1$, and $d\sigma$ the Lebesgue measure on a submanifold $S$ of $\mathbb{R}^k$ of dimension $k-1$. 
Let $X$ be an open subset of $\mathbb{R}^d$ and $Y=(X,d\mu)$, $Y=(S,d\mu)$ or $Y=(X\times S, d\mu)$, where $d\mu = dx$ if $Y=X$, $d\mu =d\sigma $ if $Y=S$ and $d\mu =dx\otimes d\sigma$ if $Y=X\times S$.

\smallskip
For $f,g\in L^2(Y)=L^2(Y,\mathbb{C})$ and $E\subset Y$ is measurable, we set
\begin{align*}
&(f|g)_{0,E}=\int_Ef\overline{g}d\mu ,
\\
&\|f\|_{0,E}=\left(\int_E|f|^2d\mu\right)^{1/2}
\end{align*}
and, if in addition $Y=(X,d\mu)$ and $f\in H^1(Y)=H^1(Y,\mathbb{C})$, let
\[
\|f\|_{1,E}=\left(\|f\|_{0,E}^2+\sum_{j=1}^d\|\partial_jf\|_{0,E}^2\right)^{1/2}.
\]

Similarly, for $F,G\in L^2(Y,\mathbb{C}^\ell )$, $\ell \ge 1$, we define
\begin{align*}
&(F|G)_{0,E}=\int_EF\cdot \overline{G}d\mu ,
\\
&\|F\|_{0,E}=\left(\int_E|F|^2d\mu\right)^{1/2}.
\end{align*}


Finally, for $f\in L^\infty (X,\mathbb{R}^\ell)$, $\ell \ge 1$, we set
\[
\|f\|_\infty =\||f|\|_{L^\infty (X,\mathbb{R})}.
\]

Throughout this text, $\Omega$ is a $C^\infty$ bounded domain of $\mathbb{R}^n$, $n \ge 1$, with  boundary $\Gamma$. Let $\nu$ denotes the outward unit normal vector field on $\Gamma$. 

Henceforth $\mathbf{a} = (a_1,\ldots ,a_n) \in W^{1,\infty}(\Omega,\mathbb{R}^n)$ is a fixed vector field. We define the magnetic Laplacian and  the magnetic gradient respectively by
\[
\Delta_\mathbf{a}= \ds \sum_{j=1}^n (\partial_j + i \, a_j)^2=\Delta +2i\mathbf{a}\cdot \nabla +i\mbox{div}(\mathbf{a})-|\mathbf{a}|^2
\]
and
\[
\nabla_\mathbf{a} = \nabla + i \mathbf{a}. 
\]

We shall also need  the notation
\[
\partial _{\nu_\mathbf{a}} = \nabla_\mathbf{a} \cdot \nu=\partial_\nu +i\mathbf{a}\cdot \nu.
\]

The following identities will be useful in the sequel. There are obtained by making integrations by parts
\begin{align}
&(\Delta_\mathbf{a} f|g)_{0,\Omega}=-(\nabla_\mathbf{a}f|\nabla_\mathbf{a} g)_{0,\Omega}+(\partial_{\nu_\mathbf{a}}f|g)_{0,\Gamma},\quad f\in H^2(\Omega ),\; g\in H^1(\Omega) ,\label{i1}
\\
&(\Delta_\mathbf{a} f|g)_{0,\Omega}=(f|\Delta_\mathbf{a}g)_{0,\Omega},\quad f,g\in H^2(\Omega )\cap H_0^1(\Omega ).\label{i2}
\end{align}

Note that we can take $f\in H_\Delta (\Omega )$ in \eqref{i1} instead of $f\in H^2(\Omega )$. In that case $(\partial_{\nu_\mathbf{a}}f|g)_{0,\Gamma}$ has to be interpreted as a duality pairing between $\partial_{\nu_\mathbf{a}}f\in H^{-1/2}(\Gamma )$ and $g\in H^{1/2}(\Gamma )$.

Let $\Lambda$ be a nonempty open subset of $\Gamma$ and 
\begin{equation}\label{hs}
\mathcal{H}=\{u\in H^1(\Omega);\; u=0\; \mbox{on}\; \Lambda\}.
\end{equation}
The Poincar\'e constant of $\mathcal{H}$ will be denoted by $\varkappa (\mathcal{H})$. That is $\varkappa (\mathcal{H})$ is the best constant so that
\[
\|u\|_{0,\Omega}\le C\|\nabla u\|_{0,\Omega},\;\; u\in \mathcal{H}.
\]
We have in particular
\[
\|u\|_{0,\Omega}\le \varkappa (\mathcal{H})\|\nabla u\|_{0,\Omega},\;\; u\in \mathcal{H}.
\]

\textbf {Magnetic gradient semi-norm}. Consider on $H_0^1(\Omega )=H_0^1(\Omega ,\mathbb{C})$ the semi-norm
\[
f\in H_0^1(\Omega )\mapsto \|\nabla_\mathbf{a} f\|_{0,\Omega}.
\] 
As it is shown by Esteban and Lions \cite[page 406]{EL}, we have 
\[
|\nabla |f||\le |\nabla_\mathbf{a}f|\quad \mbox{a.e. in}\; \Omega .
\]
Indeed, bearing in mind that $\mathbf{a}$ takes its values in $\mathbb{R}^n$, we have
\[
|\nabla |f||=\left|\Re \left(\nabla f \frac{\overline{f}}{|f|}\right)\right|= \left|\Re \left((\nabla f +i\mathbf{a}f)\frac{\overline{f}}{|f|}\right)\right|\quad \mbox{a.e. in}\; \Omega .
\]

As a consequence of this relation, we deduce that $\|\nabla_\mathbf{a} \cdot \|_{0,\Omega}$ defines a norm on $H_0^1(\Omega )$. This norm is not in general equivalent to the natural norm $\|\nabla \cdot \|_{0,\Omega}$ on $H_0^1(\Omega)$. For simplicity sake's, even it is not always necessary, we assume that $\mathbf{a}$ is chosen is such a way that $\|\nabla_\mathbf{a} \cdot \|_{0,\Omega}$ is equivalent to $\|\nabla \cdot \|_{0,\Omega}$. This is achieved for instance if $0$ is not an eigenvalue of the $\Delta_\mathbf{a}$, under Dirichlet boundary condition. We refer to \cite[Proposition 3.1]{BC} for a proof and other equivalent conditions. 

Note that if $\|\mathbf{a}\|_\infty $ is sufficiently small then $\|\nabla_\mathbf{a} \cdot \|_{0,\Omega}$ and  $\|\nabla \cdot \|_{0,\Omega}$ are equivalent on $H_0^1(\Omega)$. This follows in a straightforward manner by observing that if $\varkappa =\varkappa \left(H_0^1(\Omega)\right)$, then
\begin{equation}\label{pi}
(1-\|\mathbf{a}\|_\infty\varkappa )\|\nabla u\|_{0,\Omega}\le \|\nabla _\mathbf{a}u\|_{0,\Omega}\le (1+\|\mathbf{a}\|_\infty\varkappa )\|\nabla u\|_{0,\Omega}.
\end{equation}
Whence, under the smallness condition
\begin{equation}\label{sc}
\|\mathbf{a}\|_\infty <\frac{1}{\varkappa},
\end{equation}
$\|\nabla_\mathbf{a} \cdot \|_{0,\Omega}$ and  $\|\nabla \cdot \|_{0,\Omega}$ are equivalent on $H_0^1(\Omega )$.

More generally, if $\mathcal{H}$ is of the form \eqref{hs} and $\|\mathbf{a}\|_\infty <\frac{1}{\varkappa (\mathcal{H})}$, then $\|\nabla_\mathbf{a} \cdot \|_{0,\Omega}$ and  $\|\nabla \cdot \|_{0,\Omega}$ are equivalent on $\mathcal{H}$.

\subsection{IBVP's for the magnetic Schr\"odinger operator}

Consider $\Gamma_0$ and $\Gamma_1$ two disjoint nonempty open subsets of $\Gamma$ so that $\Gamma=\overline{\Gamma_0}\cup\overline{\Gamma_1}$. 

We consider henceforward the following assumptions on the damping coefficients.

$(A_c)$ $0\le c\in L^\infty (\Omega )$ and there exist $\omega$, an open subset of $\Omega$, and $c_0>0$ so that $c\ge c_0$  a.e. in $\omega$.

$(A_d)$ $0\le d\in L^\infty (\Gamma _0)$ and there exist $\gamma_0$, an open subset of  $\Gamma_0$, and $d_0>0$ so that $d\ge d_0$  a.e. on $\gamma_0$.

We deal with systems governed  by IBVP's for the magnetic Schr\"odinger operator with different types of dampings. The first system we consider is given by the IBVP
\be \label{schint1}
\left\{
\begin{array}{ll}
i u_t + \Delta_{\mathbf{a}} u + i c(x) u= 0 \;\; &\mbox{in}\; \Omega \times (0,+\infty), 
\\
u = 0  &\mbox{on}\; \Gamma \times (0,+\infty),
\\
 u(\cdot ,0) = u_0.
 \end{array}
 \right.
\ee

As a  consequence of \eqref{i2}  we obtain that the unbounded operator $A:L^2(\Omega )\rightarrow L^2(\Omega )$ given by $Au=\Delta_\mathbf{a}u$ and $D(A)= H^2(\Omega )\cap H_0^1(\Omega )$ is self-adjoint. 

From inequality \eqref{i1}
\[
\Re ( Au|u) _0=-\|\nabla_\mathbf{a} u\|_0^2\le 0,\quad u\in D(A).
\]
As a non negative self-adjoint densely defined operator, $A$ is m-dissipative. Then so is $A_0= iA, D(A_0) = D(A),$ and, consequently, $A_0$ generates a strongly continuous group $e^{tA_0}$. 

Assume that $c$ obeys to assumption $(A_c)$ and let $A_1=i\Delta_\mathbf{a}-c$ with domain $D(A_1)=D(A_0)$. As a bounded perturbation of $A_0$, $A_1$ generates a strongly continuous semigroup $e^{tA_1}$.

For $u_0\in L^2(\Omega )$, define the energy for the system \eqref{schint1} by
\[
\mathcal{E}^1_{u_0}(t)=\frac{1}{2}\left\|e^{tA_1}u_0\right\|_{0,\Omega}^2.
\]
If $u(t)= e^{tA_1}u_0$, we get by using identity \eqref{i1}
\[
\frac{d}{dt}\|u(t)\|_{0,\Omega}^2=2\Re (u'(t),u(t))_{0,\Omega}=2\Re \left[i\| \nabla_\mathbf{a}u(t)\|_{0,\Omega}^2-\|\sqrt{c}u(t)\|^2_{0,\Omega}\right],\;\; t>0.
\]
Hence
\[
\frac{d}{dt}\mathcal{E}^1_{u_0}(t)=-\|\sqrt{c}u(t)\|^2_{0,\Omega},\;\; t>0.
\]

Therefore $t \mapsto \mathcal{E}^1_{u_0}(t)$ is decreasing when $u_0\ne 0$. We can then address the question to know how fast this energy decay. This issue will be one of our objectives in the coming sections.

The second system is associated with an IBVP with boundary damping.
\be \label{schfront1}
\left\{
\begin{array}{ll}
i u_t + \Delta_{\mathbf{a}} u = 0 \;\; &\mbox{in}\; \Omega \times (0,+\infty), 
\\
\partial_{\nu_\mathbf{a}}u +du_t= 0  &\mbox{on}\; \Gamma_0 \times (0,+\infty),
\\
u = 0  &\mbox{on}\; \Gamma_1 \times (0,+\infty),
\\
 u(\cdot ,0) = u_0.
 \end{array}
 \right.
\ee

Introduce
\[
V=\{ u\in H^1(\Omega );\; u_{|\Gamma_1}=0\}.
\]

Then, as we have seen before,  under the smallness condition
\begin{equation}\label{sc1}
\|\mathbf{a}\|_\infty <\frac{1}{\varkappa (V)},
\end{equation}
$\|\nabla_\mathbf{a} \cdot \|_{0,\Omega}$ and  $\|\nabla \cdot \|_{0,\Omega}$ are equivalent on $V$. In particular, $V$ endowed with the norm $\|\nabla_\mathbf{a}\cdot \|_{0,\Omega}$ is a Hilbert space.

Let $d$ satisfies assumption $(A_d)$ and consider  the unbounded operator $A_2:V\rightarrow V$ given by
\[
A_2=i\Delta _\mathbf{a}\;\; \mbox{and}\;\; D(A_2)=\{ u\in V;\;  \Delta _\mathbf{a} u\in V\; \mbox{and}\; \partial_{\nu_\mathbf{a}}u+id\Delta_\mathbf{a}u=0\; \mbox{on}\; \Gamma _0\}.
\]
Let $W=\{ u\in V;\; \Delta_a u \in V \; \mbox{and}\; \partial_{\nu_\mathbf{a}}u\in L^2(\Gamma _0)\}$. 
Apply then twice \eqref{i1} in order to derive, for $u,v\in W$,
\begin{align}
&(\nabla_\mathbf{a}(i\Delta_\mathbf{a}u)|\nabla_\mathbf{a}v)_{0,\Omega}=-i( \Delta_\mathbf{a}u|\Delta_\mathbf{a}v)_{0,\Omega}+i(\Delta_\mathbf{a}u|\partial_{\nu_\mathbf{a}}v)_{0,\Gamma_0}\label{disp1},
\\
& (\nabla_\mathbf{a}u|\nabla_\mathbf{a}(-i\Delta_\mathbf{a}v))_{0,\Omega}=-i( \Delta_\mathbf{a}u|\Delta_\mathbf{a}v)_{0,\Omega}+i(\partial_{\nu_\mathbf{a}}u|\Delta_\mathbf{a}v)_{0,\Gamma_0}\label{disp2}.
\end{align}
Take in \eqref{disp1} and \eqref{disp2} $u\in D(A_2)$ and $v\in W$, we find
\begin{equation}\label{disp3}
(\nabla_\mathbf{a}(i\Delta_\mathbf{a}u)|\nabla_\mathbf{a}v)_{0,\Omega}=(\nabla_\mathbf{a}u|\nabla_\mathbf{a}(-i\Delta_\mathbf{a}v))_{0,\Omega}+i(\Delta_\mathbf{a}u|\partial_{\nu_\mathbf{a}}v-id\Delta_av)_{0,\Gamma_0}.
\end{equation}
Pick $\varphi \in C^\infty(\overline{\Omega}\setminus\Gamma_1)$ and let $u\in V$ be the variational solution of the BVP
\[
\left\{
\begin{array}{ll}
-\Delta_a u=\varphi \quad &\mbox{in}\; \Omega ,
\\
\partial_{\nu_\mathbf{a}}u=id\varphi &\mbox{in}\; \Gamma_0.
\end{array}
\right.
\]
It is then not hard to check that $u\in D(A_2)$. Hence $(\Delta_\mathbf{a}u|\partial_{\nu_\mathbf{a}}v-id\Delta_av)_{0,\Gamma_0}=0$ for any $u\in D(A_2)$ implies in particular $(\varphi |\partial_{\nu_\mathbf{a}}v-id\Delta_av)_{0,\Gamma_0}=0$ for any $\varphi \in C^\infty(\overline{\Omega}\setminus\Gamma_1)$. Hence $\partial_{\nu_\mathbf{a}}v-id\Delta_av=0$ on $\Gamma_0$. From this and \eqref{disp3} we obtain that 
\[
A_2^\ast=-i\Delta _\mathbf{a}\;\; \mbox{and}\;\; D(A_2)=\{ u\in V;\;  \Delta _\mathbf{a} u \in V\; \mbox{and}\; \partial_{\nu_\mathbf{a}}u-id\Delta_\mathbf{a}u=0\; \mbox{on}\; \Gamma _0\}.
\]
Here we identified the Hilbert space $V$ with its dual space.

Now $u=v$, with $u\in D(A_2)$, in \eqref{disp1} yields
\begin{equation}\label{disp4}
\Re (\nabla_\mathbf{a}(A_2u)|\nabla_\mathbf{a}u)_{0,\Omega}=-\|\sqrt{d}\Delta_{\mathbf{a}}u\|_{0,\Gamma_0}^2  \le 0.
\end{equation}
We get similarly from \eqref{disp2}, where $u\in D(A_2^\ast)$,
\[
\Re (\nabla_\mathbf{a}(A_2^\ast u)|\nabla_\mathbf{a}u)_{0,\Omega}=-\|\sqrt{d}\Delta_{\mathbf{a}}u\|_{0,\Gamma_0}^2  \le 0.
\]
In other words, $A_2$ and $A_2^\ast$ are dissipative. On the other hand, as for the Laplace operator, one can prove that $A_2$ is closed graph. Therefore, according to \cite[Proposition 3.1.11, page 73]{tucsnakbook}, $A_2$ is m-dissipative. Whence $A_2$ is the generator of strongly continuous semigroup $e^{tA_2}$.

The energy associated to the system \eqref{schfront1} is given by
\[
\mathcal{E}_{u_0}^2(t)=\frac{1}{2}\left\| \nabla_\mathbf{a} e^{tA_2}u_0\right\|_{0,\Omega}^2,\;\; u_0\in V.
\]
In light of \eqref{disp4}, we have
\[
\frac{d}{dt}\mathcal{E}_{u_0}^2(t)=\Re ( \nabla_\mathbf{a}u(t)|\nabla_\mathbf{a}u'(t)) _{0,\Omega}=\Re(\nabla_\mathbf{a}(A_2u(t))|\nabla_\mathbf{a}u(t))_{0,\Omega}=-\|\sqrt{d}\Delta_{\mathbf{a}}u(t)\|_{0,\Gamma_0},\quad t>0.
\]
Here again, we see that $t \mapsto \mathcal{E}_{u_0}^2(t)$ is decreasing whenever $u_0\ne 0$.

The third system is again an IBVP with a boundary damping
\be \label{schfront1b}
\left\{
\begin{array}{ll}
i u_t + \Delta_{\mathbf{a}} u = 0 \;\; &\mbox{in}\; \Omega \times (0,+\infty), 
\\
\partial_{\nu_\mathbf{a}}u -idu= 0  &\mbox{on}\; \Gamma_0 \times (0,+\infty),
\\
u = 0  &\mbox{on}\; \Gamma_1 \times (0,+\infty),
\\
 u(\cdot ,0) = u_0.
 \end{array}
 \right.
\ee
Define the unbounded operator $A_3:L^2(\Omega )\rightarrow L^2(\Omega )$ given by
\[
A_3=i\Delta _\mathbf{a}\;\; \mbox{and}\;\; D(A_3)=\{ u\in V;\;  \Delta _\mathbf{a} u \in L^2(\Omega)\; \mbox{and}\; \partial_{\nu_\mathbf{a}}u-idu=0\; \mbox{on}\; \Gamma _0\},
\]
where $d$ obeys to assumption $(A_d)$.

We repeat the same argument that we used to prove that $A_2$ is dissipative in order to derive that $A_3$ is also $m$-dissipative. Therefore $A_3$ generates a strongly continuous semigroup $e^{tA_3}$.

The energy corresponding to the system \eqref{schfront1b} is
\[
\mathcal{E}^3_{u_0}(t)=\frac{1}{2}\left\|e^{tA_3}u_0\right\|_{0,\Omega}^2,\;\; u_0\in L^2(\Omega ).
\]
In light of identity \eqref{i1}, for $u,v\in D(A_3)$, we have
\[
(i\Delta_\mathbf{a}u|v)_{0,\Omega}=-i(\nabla _\mathbf{a}u|\nabla_\mathbf{a}v)_{0,\Omega}-(du|v)_{0,\Gamma_0}.
\]
Whence
\[
\frac{d}{dt}\mathcal{E}^3_{u_0}(t)=-\|\sqrt{d}u(t)\|_{0,\Gamma_0}^2 ,\quad t>0,
\]
where $u(t)=e^{tA_3}u_0$, $u_0\in L^2(\Omega)$.

One more time, we observe that, if $u_0\neq 0$ then $t \mapsto \mathcal{E}^3_{u_0}(t)$ is decreasing.

If $\overline{\Gamma_0}\cap \overline{\Gamma_1}\not=\emptyset$, we do not have necessarily $D(A_j) \subset H^2(\Omega )$. In order to avoid this case, we assume in the rest of this text, even if it is not always necessary, that  $\overline{\Gamma_0}\cap \overline{\Gamma_1}=\emptyset$. In other words, $\Gamma$ has at least two connected components.

Prior to give sufficient condition guaranteeing that $D(A_j) \subset H^2(\Omega )$, $j=2,3$, we introduce, for $s\in \mathbb{R}$ and $1\leq r\leq \infty$, 
\[
B_{s,r}(\mathbb{R}^{n-1} ):=\{ w\in \mathscr{S}'(\mathbb{R}^{n-1});\; (1+|\xi |^2)^{s/2}\widehat{w}\in 
L^r(\mathbb{R}^{n-1} )\},
\]
where $\mathscr{S}'(\mathbb{R}^{n-1})$ is the space of temperated distributions on $\mathbb{R}^{n-1}$ and
$\widehat{w}$ is the Fourier transform of $w$. Endowed with its natural norm 
\[
\|w\|_{B_{s,r}(\mathbb{R}^{n-1} )}:=\|(1+|\xi |^2)^{s/2}\widehat{w}\|_{ L^r(\mathbb{R}^{n-1} )},
\]
$B_{s,r}(\mathbb{R}^{n-1} )$ is a Banach space (it is noted that $B_{s,2}(\mathbb{R}^{n-1} )$ is merely the usual Sobolev
space $H^s(\mathbb{R}^{n-1} )$). By using local charts and a partition of unity, we construct 
$B_{s,r}(\Gamma )$ from $B_{s,r}(\mathbb{R}^{n-1})$ similarly as $H^s(\Gamma )$ is built 
from $H^s(\mathbb{R}^{n-1})$.

The main interest in these spaces is that the multiplication by a function from $B_{s,1}(\Gamma _0)$, $s\ge 0$, defines a bounded operator on $H^s(\Gamma _0)$ (see \cite[Theorem 2.1]{Ch2002}).

Additionally to the previous conditions on $\mathbf{a}$ and $d$, we assume in the rest of this text that $\mathbf{a}\cdot \nu \in B_{1/2,1}(\Gamma _0)$ and  $d\in B_{1/2,1}(\Gamma _0)$.

Under these supplementary assumptions, for $u\in D(A_j)$, $j=2,3$, $\partial_\nu u\in H^{1/2}(\Gamma _1)$ and, since \[\left[2i\mathbf{a}\cdot \nabla +i\mbox{div}(\mathbf{a})-|\mathbf{a}|^2\right]u\in L^2(\Omega ),\] the usual $H^2$-regularity for the Laplacian with mixed boundary conditions entail $u\in H^2(\Omega )$. Whence, $D(A_j)\subset H^2(\Omega )$, $j=1,2,3$. 

\begin{remark}
{\rm
1) Let $\psi \in W^{2,\infty}(\Omega ,\mathbb{R})$ and denote by $A_j^\psi$, $j=1,2,3$, the operator $A_j$ in which we substituted $\mathbf{a}$ by $\mathbf{a}+\nabla \psi$. Straightforward computations give 
\[
e^{-i\psi}\nabla_\mathbf{a}e^{i\psi}=\nabla_{\mathbf{a}+\nabla \psi},\quad e^{-i\psi}\Delta_\mathbf{a}e^{i\psi}=\Delta_{\mathbf{a}+\nabla \psi}
\]
 and then
\begin{equation}\label{r1}
e^{tA_j^\psi}=e^{-i\psi}e^{tA_j}e^{i\psi},\;\; j=1,2,3.
\end{equation}
In particular,
\[
\|e^{tA_j^\psi}\|_{\mathscr{B}(H)}=\|e^{tA_j}\|_{\mathscr{B}(H)},\;\; H=L^2(\Omega )\; \mbox{for}\; j=1,3\; \mbox{and}\; H=V\; \mbox{if}\; j=2.
\]
Let $\mathcal{E}^{j,\psi}_{u_0}$ the energy corresponding to $A_j^\psi$, with $u_0\in L^2(\Omega )$, $j=1,3$ and $u_0\in V$ for $j=2$. In light of \eqref{r1}, we have
\[
\mathcal{E}^{j,\psi}_{u_0}=\mathcal{E}^j_{e^{i\psi}u_0},\;\; j=1,2,3.
\]

2) Assume $n=1$ and let $\Omega =(0,1)$. Denote by $A_j^0$ the operator $A_j$ when $\mathbf{a}=0$, $j=1,2,3$. Using that $\psi (x) =\int_0^x\mathbf{a}(t)dt$ satisfies $\partial_x\psi =\mathbf{a}$, we get from 1)
\[
e^{tA_j}=e^{-i\psi}e^{tA_j^0}e^{i\psi}\;\; \mbox{and}\;\; \mathcal{E}^{j}_{u_0}=\mathcal{E}^{0,j}_{e^{i\psi}u_0},\;\; j=1,2,3.
\]
Here $\mathcal{E}^{0,j}_{u_0}$ is the energy corresponding to $A_j^0$, $j=1,2,3$. Therefore, all the results existing in the literature without the presence of magnetic potential can be transferred to the magnetic case.

}
\end{remark}

\subsection{Main results}

Let $\mathcal{H}_\ell =L^2(\Omega )$ if $\ell=1,3$ and $\mathcal{H}_2=V$. The first results we are going to prove concern logarithmic stabilization in both cases of interior or boundary damping. We will prove in each case of $\ell =1$, $\ell=2$ or $\ell =3$,

\begin{theorem} 
	\label{maintheorem1}
Assume that assumptions $(A_c)$ and $(A_d)$ hold. For every $\mu\in\mathbb{R}$, $A_\ell -i\mu$ is invertible and

\noindent
{\rm (i)} $\|(A_\ell -i\mu)^{-1}\|_{\mathscr{B}(\mathcal{H}_\ell)}\le C e^{K\sqrt{|\mu|}}$, $\mu\in\mathbb{R}$, for some  constants $C>0$ and $K>0$,

\noindent
{\rm (ii)} there exists a constant $C_1>0$ such that 
\[
\|e^{tA_\ell }u_0\|_{\mathcal{H}_\ell}\le \frac{C_1}{\ln^{2k}(2+t)}\| u_0\|_{D(A_\ell ^k)},\quad  u_0\in D(A_\ell ^k) .
\]
\end{theorem}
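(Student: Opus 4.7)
The proof splits naturally into two stages. First I would establish the resolvent estimate (i) by combining the dissipation identities already derived in the introduction with an elliptic Carleman interpolation inequality for the magnetic Helmholtz operator; second, I would deduce (ii) from (i) by invoking the Batty--Duyckaerts theorem on non-uniform stability (equivalently, Burq's theorem relating an $e^{K\sqrt{|\mu|}}$ resolvent bound on $i\mathbb{R}$ to a $\ln^{-2k}(2+t)$ semigroup decay on $D(A^k)$).

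Fix $\mu\in\mathbb{R}$, $f\in\mathcal{H}_\ell$ and assume $v\in D(A_\ell)$ satisfies $(A_\ell-i\mu)v=f$. Taking the $\mathcal{H}_\ell$-inner product with $v$ and isolating the real part, exactly as in the computations of $\tfrac{d}{dt}\mathcal{E}^\ell_{u_0}$ carried out above, produces the partial observation bounds
\[
\|v\|_{0,\omega}^2\le C\|f\|_{\mathcal{H}_1}\|v\|_{\mathcal{H}_1}\;(\ell=1),\quad \|v\|_{0,\gamma_0}^2\le C\|f\|\|v\|\;(\ell=3),\quad \|\Delta_\mathbf{a}v\|_{0,\gamma_0}^2\le C\|f\|_V\|v\|_V\;(\ell=2),
\]
thanks to assumptions $(A_c)$ and $(A_d)$. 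The heart of the argument is then an elliptic Carleman interpolation inequality of the form
\[
\|v\|_{1,\Omega}\le Ce^{K\sqrt{|\mu|}}\bigl(\|(\Delta_\mathbf{a}+\mu)v\|_{0,\Omega}+N(v)\bigr),
\]
where $N(v)=\|v\|_{0,\omega}$ for $\ell=1$ and $N(v)$ is the appropriate trace norm on $\gamma_0$ for $\ell=2,3$. Such estimates are classical for $-\Delta$ under the geometric configuration provided by $(A_c)$ and $(A_d)$; the magnetic lower-order terms $2i\mathbf{a}\cdot\nabla+i\mbox{div}(\mathbf{a})-|\mathbf{a}|^2$ are absorbed in the Carleman weight since $\mathbf{a}\in W^{1,\infty}$. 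Replacing $(\Delta_\mathbf{a}+\mu)v$ by the appropriate combination of $f$ and $v$ via the resolvent equation and the boundary condition, combining with the observation bound, and absorbing the resulting $\|v\|$-term on the right for large $|\mu|$ (the case of small $|\mu|$ being handled by the same Carleman estimate at $\mu=0$), gives
\[
\|v\|_{\mathcal{H}_\ell}\le Ce^{K\sqrt{|\mu|}}\|f\|_{\mathcal{H}_\ell}.
\]
Invertibility of $A_\ell-i\mu$ is then obtained from this a priori bound together with the Fredholm alternative, using the compactness of $D(A_\ell)\hookrightarrow \mathcal{H}_\ell$ that follows from the $H^2$-regularity $D(A_\ell)\subset H^2(\Omega)$.

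For (ii), each $A_\ell$ generates a contraction semigroup on the Hilbert space $\mathcal{H}_\ell$, and stage~1 places $i\mathbb{R}\subset \rho(A_\ell)$ with $\|R(i\mu,A_\ell)\|_{\mathscr{B}(\mathcal{H}_\ell)}\le M(|\mu|)$, $M(s)=Ce^{K\sqrt{s}}$. The Batty--Duyckaerts theorem then yields
\[
\|e^{tA_\ell}A_\ell^{-k}\|_{\mathscr{B}(\mathcal{H}_\ell)}=O\bigl(M_{\log}^{-1}(t/c)^{-k}\bigr),\qquad M_{\log}(s)=M(s)\bigl(\log(1+M(s))+\log(1+s)\bigr).
\]
A direct asymptotic calculation gives $M_{\log}^{-1}(t)\sim (\log t)^2/K^2$, which produces exactly the stated $\ln^{-2k}(2+t)$ decay rate after passing from $A_\ell^{-k}$ to the graph norm $\|\cdot\|_{D(A_\ell^k)}$.

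The principal obstacle will be establishing the elliptic Carleman interpolation inequality with the explicit $e^{K\sqrt{|\mu|}}$ dependence uniformly in $\mu$ and adapting it to the three boundary-condition regimes. The Dirichlet case $\ell=1$ is well documented, but the boundary-damping cases require local Carleman weights near $\Gamma_0$; the case $\ell=2$ is the most delicate, both because of the non-standard domain condition $\partial_{\nu_\mathbf{a}}u+id\Delta_\mathbf{a}u=0$ on $\Gamma_0$ and because the ambient Hilbert space is $V$ equipped with the magnetic-gradient norm, so that the observation from the dissipation identity sits on $\Delta_\mathbf{a}v$ rather than on $v$. The boundary regularity hypotheses $\mathbf{a}\cdot\nu,\,d\in B_{1/2,1}(\Gamma_0)$ introduced in the previous subsection were imposed precisely to legitimize the trace manipulations and $H^2$-regularity on which the Carleman argument relies.
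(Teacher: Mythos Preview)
Your overall architecture---dissipation identity $\Rightarrow$ observation bound on $\omega$ or $\gamma_0$, Carleman estimate $\Rightarrow$ resolvent bound $e^{K\sqrt{|\mu|}}$, then Batty--Duyckaerts $\Rightarrow$ $\ln^{-2k}$ decay---is exactly the paper's. The place where your sketch is thinner than the paper is the sentence ``such estimates are classical for $-\Delta$'': the inequality
\[
\|v\|_{1,\Omega}\le Ce^{K\sqrt{|\mu|}}\bigl(\|(\Delta_\mathbf{a}+\mu)v\|_{0,\Omega}+N(v)\bigr)
\]
is precisely the content of the argument, not an input to it. The paper obtains it by the Lebeau--Robbiano \emph{add-a-variable} device: for $\mu<0$ one sets $\alpha=\sqrt{-\mu}$, $f(s,x)=e^{\alpha s}v(x)$ on the cylinder $X=(-2,2)\times\Omega$, so that $(\partial_s^2+\Delta_\mathbf{a})f=e^{\alpha s}g$ is genuinely elliptic in $n{+}1$ variables. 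A \emph{parameter-free} global Carleman estimate on $X$ (with weight $\varphi(s,x)=e^{\lambda(-\beta s^2+\psi(x))}$ and $\psi$ built so that its critical points lie in the observation set, resp.\ so that $\partial_\nu\psi<0$ away from $\gamma_0$) is then applied to $\chi(s)f$; choosing $\tau=\gamma\alpha=\gamma\sqrt{|\mu|}$ balances the cut-off error (weight $\sim e^{C_1\tau}$) against the interior lower bound (weight $\sim e^{C_2\tau}$) and the observation term (weight $\sim e^{C_3\tau}$), with $\beta$ chosen so that $C_1<C_2<C_3$. This is where the $\sqrt{|\mu|}$ originates. The case $\mu\ge 0$ is handled directly from the real part of the energy identity, not via Carleman.

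Two smaller points. First, for $\ell=2$ the paper does not use the $V$-inner product you suggest; it multiplies by $\bar u$ in $L^2(\Omega)$, uses the boundary condition $\partial_{\nu_\mathbf{a}}u=-id\Delta_\mathbf{a}u=-id(\mu u+g)$, and extracts from the imaginary part the observation $\|u\|_{0,\gamma_0}^2\lesssim\|\nabla_\mathbf{a}g\|\,\|\nabla_\mathbf{a}u\|$ (for $|\mu|\ge1$). Your alternative observation $\|\sqrt{d}\,\Delta_\mathbf{a}v\|_{0,\Gamma_0}^2\le\|f\|_V\|v\|_V$ is correct and ultimately equivalent via $\Delta_\mathbf{a}v=\mu v+g$, but the Carleman machinery wants $v$ and $\partial_\nu v$ on $\gamma_0$, so you would still have to unwind it. Second, for invertibility the paper does not appeal only to compactness: for $\ell=1$ it computes $\mathrm{ind}(iA_1+\mu)=0$ via $(iA_1)^\ast=iA_1+2ic$, and for $\ell=2,3$ it uses the already-established invertibility at $\mu=0$ plus compactness of $A_\ell^{-1}$, with injectivity in all cases coming from unique continuation once the observation identity forces the Cauchy data to vanish on $\omega$ or $\gamma_0$.
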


Next, we establish observability inequalities for the magnetic Schr\"odinger operator. To this end, fix $x_0\in \mathbb{R}^n$, let $m=m(x)=x-x_0$, $x\in \mathbb{R}^n$ and assume that
\begin{equation}\label{ass1}
\Gamma_0=\{ x\in \Gamma ;\; m(x)\cdot \nu (x)>0\}.
\end{equation}

Observe that in the present case the condition $\overline{\Gamma_0}\cap\overline{\Gamma_1}=\emptyset$ is satisfied for instance if $\Omega =\Omega_0\setminus \Omega_1$, with $\Omega_1\Subset \Omega_0$, $\Omega_j$ star-shaped with respect to $x_0\in \Omega_1$ and $\Gamma_j=\partial \Omega_j$, $j=0,1$.

\begin{proposition}\label{mainproposition1}
Assume that $\Gamma_0$ is of the form \eqref{ass1}. Then there exists a constant $C>0$,  only depending on $\Omega$ and $T$, so that,  for any $u_0\in D(A_0)$ and $u(t)=e^{tA_0}u_0$, we have
\[
\|\nabla_\mathbf{a}u_0\|_{0,\Omega}  \le C\|\partial_{\nu_\mathbf{a}}u\|_{0,\Sigma_0}.
\]
\end{proposition}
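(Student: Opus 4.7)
The proof I have in mind is the classical Morawetz--Lions--Machtyngier multiplier method, gauge-adapted to the magnetic setting. In place of the usual real multiplier $m\cdot\nabla\bar u$, one works with the gauge-covariant Morawetz multiplier
\[
Mu := 2\, m\cdot\nabla_\mathbf{a} u + (n-1)\, u,\qquad m(x)=x-x_0,
\]
and the backbone is the identity
\[
0 \;=\; 2\Re \int_0^T \bigl( i u_t + \Delta_\mathbf{a} u \,\big|\, M u \bigr)_{0,\Omega}\, dt,
\]
obtained from $iu_t+\Delta_\mathbf{a} u=0$ and developed by integration by parts both in $x$ and in $t$.

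The time contribution is integrated by parts in $t$ and reduces to a boundary-in-time bracket of the form $\bigl[\Im(u \,|\, M u)_{0,\Omega}\bigr]_{t=0}^{t=T}$; by Cauchy--Schwarz it is bounded by $C(\Omega)\bigl(\|u_0\|_{0,\Omega}^2 + \|\nabla_\mathbf{a} u_0\|_{0,\Omega}^2\bigr)$, using the two conservation laws $\|u(t)\|_{0,\Omega}=\|u_0\|_{0,\Omega}$ and $\|\nabla_\mathbf{a} u(t)\|_{0,\Omega}=\|\nabla_\mathbf{a} u_0\|_{0,\Omega}$ attached to the unitary group $e^{tA_0}$. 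The spatial contribution $2\Re(\Delta_\mathbf{a} u | Mu)_{0,\Omega}$ is handled by applying \eqref{i1} twice together with the commutation $[\partial_j+ia_j,\, m_k]=\delta_{jk}$; after regrouping one obtains
\[
2\Re(\Delta_\mathbf{a} u | Mu)_{0,\Omega} \;=\; 2\|\nabla_\mathbf{a} u\|_{0,\Omega}^2 \;-\; \int_\Gamma (m\cdot\nu)\,|\partial_{\nu_\mathbf{a}} u|^2\, d\sigma \;+\; \mathcal R(u),
\]
the boundary simplification using that $u_{|\Gamma}=0$ forces $\nabla_\mathbf{a} u=(\partial_{\nu_\mathbf{a}} u)\,\nu$ on $\Gamma$, and $\mathcal R(u)$ collecting the commutator contributions generated by $\mathbf{a}$, all of the schematic form $\int_\Omega O\bigl(|u|\,|\nabla_\mathbf{a} u|+|u|^2\bigr)\, dx$ thanks to $\mathbf{a}\in W^{1,\infty}(\Omega,\mathbb{R}^n)$.

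Integrating over $(0,T)$ and discarding the favourable boundary contribution $\int_{\Gamma_1}(m\cdot\nu)|\partial_{\nu_\mathbf{a}} u|^2\, d\sigma\le 0$ (which has the right sign by the very definition of $\Gamma_0$), one arrives at
\[
2T\|\nabla_\mathbf{a} u_0\|_{0,\Omega}^2 \;\le\; \|m\cdot\nu\|_{L^\infty(\Gamma_0)}\, \|\partial_{\nu_\mathbf{a}} u\|_{0,\Sigma_0}^2 \;+\; C(1+T)\bigl(\|u_0\|_{0,\Omega}^2 + \|u_0\|_{0,\Omega}\|\nabla_\mathbf{a} u_0\|_{0,\Omega}\bigr).
\]
The lower-order terms on the right are then absorbed by Young's inequality combined with the Poincar\'e bound $\|u_0\|_{0,\Omega}\le \varkappa\|\nabla u_0\|_{0,\Omega}$ and the equivalence $\|\nabla_\mathbf{a}\cdot\|_{0,\Omega}\simeq \|\nabla\cdot\|_{0,\Omega}$ on $H_0^1(\Omega)$ that is assumed throughout the paper (so that $\|u_0\|_{0,\Omega}\lesssim \|\nabla_\mathbf{a} u_0\|_{0,\Omega}$); this yields the announced observability estimate with a constant depending only on $\Omega$ and $T$.

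The main technical obstacle is the careful accounting of the magnetic remainder $\mathcal R(u)$. In contrast with the pure Laplacian case, the multiplier $Mu$ does not commute with $\Delta_\mathbf{a}$ modulo an exact divergence: pieces involving $\partial_j a_k$, $\mbox{div}(\mathbf{a})$ and $|\mathbf{a}|^2$ are generated when $\nabla_\mathbf{a}$ is pushed past $m\cdot\nabla_\mathbf{a}$. One has to verify, through one further integration by parts when necessary, that every such term is effectively of the absorbable form $O(|u|\,|\nabla_\mathbf{a} u|+|u|^2)$ and that none of them hides a genuine top-order contribution in $|\nabla_\mathbf{a} u|^2$ that would spoil the coefficient $2T$ in front of $\|\nabla_\mathbf{a} u_0\|_{0,\Omega}^2$.
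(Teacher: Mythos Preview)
Your strategy is the same as the paper's --- the Morawetz multiplier method adapted to the magnetic Laplacian --- and your multiplier $Mu=2m\cdot\nabla_\mathbf{a}u+(n-1)u$ is a harmless variant of the paper's choice $\aleph=m$ in its general identity (Proposition~\ref{proposition3.1}). Two points, however, deserve comment.

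First, the ``magnetic remainder'' $\mathcal{R}(u)$ you worry about is in fact identically zero. The paper carries out the integration by parts for $\langle\Delta_\mathbf{a}u\mid\aleph\cdot\nabla_\mathbf{a}u\rangle_{0,Q}$ entirely in the covariant variables $d_j=\partial_j+ia_j$, and the key observation (Step~1 of the proof of Proposition~\ref{proposition3.1}) is that $\Re\bigl[d_ju\,\overline{d_jd_ku}\bigr]=\tfrac12\partial_k|d_ju|^2$: the $ia_k$-contribution is purely imaginary and drops out of the real part. For $\aleph=m$ (so $D\aleph=\mathrm{Id}$, $\nabla\operatorname{div}\aleph=0$, $\partial_t\aleph=0$) the identity therefore collapses to
\[
(m\cdot\nu\mid|\partial_{\nu_\mathbf{a}}u|^2)_{0,\Sigma}=\|\nabla_\mathbf{a}u\|_{0,Q}^2-\tfrac{i}{2}\bigl[(um\mid\nabla_\mathbf{a}u)_{0,\Omega}\bigr]_0^T,
\]
with no volume remainder whatsoever. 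Your ``main technical obstacle'' dissolves once the computation is done in the covariant frame throughout; no terms in $\partial_j a_k$, $\operatorname{div}\mathbf{a}$ or $|\mathbf{a}|^2$ survive.

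Second --- and this is the genuine gap --- your final step, absorbing the lower-order term via Poincar\'e and the norm equivalence $\|\nabla_\mathbf{a}\cdot\|\simeq\|\nabla\cdot\|$, only closes when $T$ is sufficiently large. Even with $\mathcal{R}(u)=0$ one arrives at
\[
T\|\nabla_\mathbf{a}u_0\|_{0,\Omega}^2\le \|m\|_\infty\|\partial_{\nu_\mathbf{a}}u\|_{0,\Sigma_0}^2+\|m\|_\infty\|u_0\|_{0,\Omega}\|\nabla_\mathbf{a}u_0\|_{0,\Omega},
\]
and Poincar\'e converts the last term into $c(\Omega,\mathbf{a})\|\nabla_\mathbf{a}u_0\|_{0,\Omega}^2$, which can be subtracted from the left only if $T>c(\Omega,\mathbf{a})$. (With your $C(1+T)$-sized remainder the situation is worse: absorption would impose a smallness constraint on the constants that is nowhere assumed.) The paper instead removes the $\|u_0\|_{0,\Omega}^2$ term by a compactness--uniqueness argument borrowed from Machtyngier: from
\[
(T-\epsilon)\|\nabla_\mathbf{a}u_0\|_{0,\Omega}^2\le \|m\|_\infty\|\partial_{\nu_\mathbf{a}}u\|_{0,\Sigma_0}^2+C_\epsilon\|u_0\|_{0,\Omega}^2
\]
one argues by contradiction, using Rellich compactness and unique continuation for $\Delta_\mathbf{a}$, that the lower-order term can be dropped for \emph{every} $T>0$. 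That step is what gives the result for arbitrary $T$, and it is missing from your outline.
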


Consider the following assumption: (A) $\omega$ is a neighborhood of $\Gamma_0$ in $\Omega$ so that $\overline{\omega}\cap \Gamma_1=\emptyset$.


\begin{proposition}\label{mainproposition2}
Under assumption (A), there exists a constant $C>0$,  only depending on $\Omega$, $T$, $\Omega$ and $\Gamma_0$, so that, for any $u_0\in D(A_0)$ and $u(t)=e^{tA_0}u_0$, we have
\[
\|u_0\|_{0,\Omega}  \le C\|u\|_{0,Q_\omega}.
\]
Here $Q_\omega =\omega \times (0,T)$.
\end{proposition}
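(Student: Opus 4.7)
The idea is to reduce Proposition \ref{mainproposition2} to Proposition \ref{mainproposition1} by bounding the boundary trace $\|\partial_{\nu_\mathbf{a}}u\|_{L^2(\Sigma_0)}$ by the interior $L^2$-norm of $u$ on $Q_\omega$, modulo lower-order terms that can be absorbed.

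Using assumption (A), fix a smaller neighborhood $\omega_1$ of $\Gamma_0$ with $\omega_1\Subset\omega$, and pick a vector field $h\in C^\infty(\overline\Omega,\mathbb{R}^n)$ with $h=\nu$ on $\Gamma_0$, $h=0$ in a neighborhood of $\Gamma_1$, and $\mathrm{supp}\,h\subset\overline{\omega_1}\cup\overline{\Gamma_0}$. Multiply $iu_t+\Delta_\mathbf{a}u=0$ by $h\cdot\overline{\nabla_\mathbf{a}u}$, integrate over $Q=\Omega\times(0,T)$, apply identity \eqref{i1}, and integrate by parts in time; taking real parts and using $u|_\Gamma=0$ yields a Rellich-type estimate
\[
\int_{\Sigma_0}|\partial_{\nu_\mathbf{a}}u|^2\,d\sigma dt \le C\int\!\!\!\int_{Q_{\omega_1}}\bigl(|\nabla_\mathbf{a}u|^2+|u|^2\bigr)\,dxdt + C\,\|\nabla_\mathbf{a}u_0\|_{0,\Omega}\|u_0\|_{0,\Omega},
\]
the last, time-boundary term being controlled via conservation of $\|u(t)\|_{0,\Omega}$ and $\|\nabla_\mathbf{a}u(t)\|_{0,\Omega}$ for $u_0\in D(A_0)$. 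Next, take a cutoff $\chi\in C^\infty(\overline\Omega)$ with $\chi=1$ on $\omega_1$ and $\mathrm{supp}\,\chi\subset\overline\omega\cup\overline{\Gamma_0}$; multiplying the equation by $\chi\bar u$, integrating over $Q$, and applying \eqref{i1} one derives
\[
\|\nabla_\mathbf{a}u\|^2_{L^2(Q_{\omega_1})}\le C\|u\|^2_{L^2(Q_\omega)}+C\|u_0\|^2_{0,\Omega}.
\]

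Combining these two inequalities with Proposition \ref{mainproposition1} and using Young's inequality to absorb the cross term $\|\nabla_\mathbf{a}u_0\|_{0,\Omega}\|u_0\|_{0,\Omega}$ into the left delivers
\[
\|\nabla_\mathbf{a}u_0\|^2_{0,\Omega}\le C\|u\|^2_{L^2(Q_\omega)}+C\|u_0\|^2_{0,\Omega}.
\]
By Poincar\'e the residual term $\|u_0\|^2_{0,\Omega}$ is not dominated by the left-hand side. Removing it is the main obstacle and is handled by a classical compactness-uniqueness argument: if the conclusion failed, a normalized sequence $(u_{0,n})\subset D(A_0)$ with $\|u_{0,n}\|_{0,\Omega}=1$ and $\|u_n\|_{L^2(Q_\omega)}\to 0$ would yield, by the above bound, $(\|\nabla_\mathbf{a}u_{0,n}\|_{0,\Omega})$ bounded, hence a subsequence converging weakly in $H^1(\Omega)$ and strongly in $L^2(\Omega)$ to some $u_0^\infty$ with $\|u_0^\infty\|_{0,\Omega}=1$, whose associated solution $u^\infty=e^{tA_0}u_0^\infty$ vanishes on $Q_\omega$. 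The unique continuation principle for the magnetic Schr\"odinger equation, available from elliptic Carleman estimates under the regularity of $\mathbf{a}$, then forces $u^\infty\equiv 0$, a contradiction.
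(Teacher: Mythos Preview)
Your overall strategy—reduce to the boundary observability of Proposition~\ref{mainproposition1} via a localized Rellich identity, then eliminate the residual lower-order term by compactness-uniqueness—is the same architecture as the paper's sketch. Step~1 (the Rellich estimate with a vector field $h$ supported near $\Gamma_0$) is correct and follows from Proposition~\ref{proposition3.1} exactly as you indicate.

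The gap is Step~2. Multiplying $iu_t+\Delta_\mathbf{a}u=0$ by $\chi\bar u$ and taking the real part yields a \emph{tautology}: after using \eqref{i1} and $\Re(\bar u\,\nabla\chi\cdot\nabla_\mathbf{a}u)=-\tfrac12|u|^2\Delta\chi$, the identity collapses to $0=0$ because $u_t=i\Delta_\mathbf{a}u$ makes the time term equal to the elliptic term. In fact the inequality you state,
\[
\|\nabla_\mathbf{a}u\|^2_{L^2(Q_{\omega_1})}\le C\|u\|^2_{L^2(Q_\omega)}+C\|u_0\|^2_{0,\Omega},
\]
is \emph{false}: on an interval with $\mathbf{a}=0$, take $u_0=\sin(kx)$ normalized in $L^2$; the left side is of order $k^2$ while the right side stays bounded. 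Without this estimate the subsequent compactness argument cannot start, since you have no a~priori bound on $\|\nabla_\mathbf{a}u_{0,n}\|_{0,\Omega}$.

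What the paper does instead at this point is the genuine Caccioppoli estimate for $\Delta_\mathbf{a}u=-i\partial_t u$,
\[
\|\nabla_\mathbf{a}u\|_{0,Q_{\tilde\omega}}\le C\bigl(\|u\|_{0,Q_\omega}+\|\partial_t u\|_{L^2((0,T),H^{-1}(\omega))}\bigr),
\]
and then removes the $H^{-1}$ term (which is lower order, since $\|\partial_t u(t)\|_{H^{-1}}\lesssim\|\nabla_\mathbf{a}u(t)\|_{0,\Omega}=\|\nabla_\mathbf{a}u_0\|_{0,\Omega}$) by the interpolation/duality argument of Machtyngier. Your compactness-uniqueness alternative is a legitimate substitute for that last step, but only once you replace your Step~2 by the correct Caccioppoli inequality; the $H^{-1}$ term then contributes a bounded factor $\|\nabla_\mathbf{a}u_{0,n}\|_{0,\Omega}$ that you can control by first absorbing it (with a time cutoff in Step~1, as the paper does, to avoid the cross term) and \emph{then} running the contradiction argument.
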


Finally, we use these observability inequalities to obtain the following exponential stabilization results.

\begin{theorem}\label{maintheorem2}
If the assumption (A) holds, then there exists a constant $\varrho>0$, depending only on $\Omega$, $T$, $\Omega$ and $\Gamma_0$, so that
\[
\mathcal{E}_{u_0}^1(t)\le e^{-\varrho t}\mathcal{E}_{u_0}^1(0),\quad  u_0\in L^2(\Omega ).
\]
\end{theorem}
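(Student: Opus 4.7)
My plan is to combine Proposition \ref{mainproposition2} with a Duhamel-type comparison between the damped semigroup $e^{tA_1}$ and the free group $e^{tA_0}$, in order to read the observation on $Q_\omega$ as a decrement of the energy $\mathcal{E}_{u_0}^1$. Recall from the discussion of \eqref{schint1} the dissipation identity
\[
\mathcal{E}_{u_0}^1(0)-\mathcal{E}_{u_0}^1(T)=\int_0^T\|\sqrt{c}\,u(s)\|_{0,\Omega}^2\,ds,\qquad u(t)=e^{tA_1}u_0.
\]
The target of the argument is an inequality $\mathcal{E}_{u_0}^1(T)\le\alpha\,\mathcal{E}_{u_0}^1(0)$ with $\alpha\in[0,1)$, from which the theorem will follow by iteration on the intervals $[nT,(n+1)T]$, the semigroup property, and the standard comparison between discrete and continuous exponential decay.

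\textbf{Duhamel comparison and transfer of observability.} For $u_0\in D(A_0)$ I would set $u(t)=e^{tA_1}u_0$, $v(t)=e^{tA_0}u_0$ and $w(t)=v(t)-u(t)$. Since $A_1=A_0-c$, variation of parameters gives $w(t)=\int_0^t e^{(t-s)A_0}(c\,u(s))\,ds$. The group $e^{tA_0}$ is unitary on $L^2(\Omega)$, and $\|c\,u(s)\|_{0,\Omega}\le\|c\|_\infty^{1/2}\|\sqrt{c}\,u(s)\|_{0,\Omega}$; Cauchy--Schwarz in $s$ together with the inclusion $Q_\omega\subset\Omega\times(0,T)$ would then yield
\[
\|w\|_{0,Q_\omega}^2\le\frac{T^2\|c\|_\infty}{2}\int_0^T\|\sqrt{c}\,u(s)\|_{0,\Omega}^2\,ds.
\]
Since $c\ge c_0$ on $\omega$, one also has $\|u\|_{0,Q_\omega}^2\le c_0^{-1}\int_0^T\|\sqrt{c}\,u(s)\|_{0,\Omega}^2\,ds$. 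Applying Proposition \ref{mainproposition2} to $v$, together with the triangle inequality $\|v\|_{0,Q_\omega}\le\|u\|_{0,Q_\omega}+\|w\|_{0,Q_\omega}$, should then produce a constant $C_\ast>0$, depending only on $\Omega$, $T$, $\Gamma_0$, $c_0$ and $\|c\|_\infty$, such that
\[
\|u_0\|_{0,\Omega}^2\le C_\ast\int_0^T\|\sqrt{c}\,u(s)\|_{0,\Omega}^2\,ds.
\]

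\textbf{Conclusion.} Reading the integral on the right as $\mathcal{E}_{u_0}^1(0)-\mathcal{E}_{u_0}^1(T)$ and using $\|u_0\|_{0,\Omega}^2=2\mathcal{E}_{u_0}^1(0)$ gives $\mathcal{E}_{u_0}^1(T)\le(1-2/C_\ast)\mathcal{E}_{u_0}^1(0)$, and non-negativity of the energy forces $\alpha:=1-2/C_\ast\in[0,1)$. Iteration on $[nT,(n+1)T]$ then yields the exponential bound with $\varrho=-T^{-1}\log\alpha>0$ (the borderline case $\alpha=0$ gives finite-time extinction, which is stronger), and density of $D(A_0)$ in $L^2(\Omega)$ extends the estimate to arbitrary initial data. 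The main obstacle is the Duhamel step: one must arrange the decomposition so that the unobservable remainder $w$ is controlled by the very quantity that equals the energy decrement, which works precisely because the defect $A_1-A_0=-c$ pairs naturally with $\sqrt{c}$ through $\|c\,u\|_{0,\Omega}\le\|c\|_\infty^{1/2}\|\sqrt{c}\,u\|_{0,\Omega}$.
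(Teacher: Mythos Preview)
Your proof is correct and follows essentially the same route as the paper's: both decompose $u=v+w$ via Duhamel with $v(t)=e^{tA_0}u_0$, apply Proposition~\ref{mainproposition2} to $v$, bound the remainder $w$ using unitarity of $e^{tA_0}$ together with $\|cu\|_{0,\Omega}\le\|c\|_\infty^{1/2}\|\sqrt{c}\,u\|_{0,\Omega}$, and then read $\int_0^T\|\sqrt{c}\,u\|_{0,\Omega}^2\,dt$ as the energy decrement. Your concluding step---deriving $\mathcal{E}^1_{u_0}(T)\le\alpha\,\mathcal{E}^1_{u_0}(0)$ with $\alpha\in[0,1)$ and iterating over intervals of length $T$---is in fact written more carefully than the paper's final line, which phrases the same estimate as a pointwise differential inequality.
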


\begin{theorem}\label{maintheorem3}
Let $\Gamma_0$ be of the form \eqref{ass1} for some $x_0$. Then there exists $0<\varsigma \le \frac{1}{2\varkappa (V)}$, depending on $x_0$ and $\Omega$, with the property that, if $\|\mathbf{a}\|_\infty \le \varsigma$ and $\mathbf{a}=0$ on $\Gamma_0$, then there exists two constants $C>0$ and $\varrho>0$, depending only on $x_0$ and $\Omega$, so that
\[
\mathcal{E}_{u_0}^2(t)\le Ce^{-\varrho t}\mathcal{E}_{u_0}^2(0),\quad u_0\in V.
\]
\end{theorem}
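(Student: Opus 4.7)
The plan is to establish the exponential decay through an observability-type inequality
\[
\|\nabla_\mathbf{a} u_0\|_{0,\Omega}^2 \le C \int_0^T \|\sqrt{d}\,\Delta_\mathbf{a} u(t)\|_{0,\Gamma_0}^2 \, dt,
\]
valid for every solution $u(t)=e^{tA_2}u_0$ of \eqref{schfront1} and some fixed $T>0$ chosen large enough. Combined with the dissipation identity $\mathcal{E}^2_{u_0}(0)-\mathcal{E}^2_{u_0}(T) = \int_0^T \|\sqrt{d}\,\Delta_\mathbf{a} u(t)\|_{0,\Gamma_0}^2 \, dt$ coming from \eqref{disp4}, this yields $\mathcal{E}^2_{u_0}(T)\le \theta\,\mathcal{E}^2_{u_0}(0)$ with some $\theta\in(0,1)$; the semigroup property then gives $\mathcal{E}^2_{u_0}(nT)\le \theta^n \mathcal{E}^2_{u_0}(0)$, whence the claimed exponential estimate.

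The observability I would derive by the Morawetz--Rellich multiplier method adapted to the magnetic framework, in the spirit of Machtyngier and Komornik for the non-magnetic Schr\"odinger equation. Multiply $iu_t+\Delta_\mathbf{a} u = 0$ by $\overline{Mu}$, with $Mu = 2\,m\cdot\nabla_\mathbf{a} u + (n-1)u$, integrate over $\Omega\times(0,T)$, and take real parts. Using \eqref{i1} repeatedly, the $\Delta_\mathbf{a}$-contribution produces a bulk term proportional to $\int_0^T\|\nabla_\mathbf{a} u(t)\|_{0,\Omega}^2\,dt$, boundary integrals on $\Gamma_0\cup\Gamma_1$, plus magnetic remainders involving $\mathbf{a}$, $\operatorname{div}\mathbf{a}$ and the commutator of multiplication by $m$ with $\nabla_\mathbf{a}$. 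The $iu_t$-piece integrates by parts in $t$ to yield a bracket $\bigl[\Im(u|Mu)_{0,\Omega}\bigr]_0^T$ dominated by $\mathcal{E}^2_{u_0}(0)+\mathcal{E}^2_{u_0}(T)\le 2\mathcal{E}^2_{u_0}(0)$. On $\Gamma_1$ the Dirichlet condition $u=0$ reduces $\nabla_\mathbf{a} u$ to $(\partial_\nu u)\nu$ and produces an integrand proportional to $(m\cdot\nu)|\partial_\nu u|^2$; since $m\cdot\nu\le 0$ on $\Gamma_1$ by \eqref{ass1}, this contribution carries the favourable sign and can be discarded.

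On $\Gamma_0$ the hypothesis $\mathbf{a}=0$ is decisive: it gives $\nabla_\mathbf{a} u=\nabla u$ and $\partial_{\nu_\mathbf{a}} u=\partial_\nu u$ there, and combined with the boundary condition of \eqref{schfront1} yields $\partial_\nu u = -i d\,\Delta_\mathbf{a} u$. The remaining $\Gamma_0$ boundary terms then become explicit combinations of $|\partial_\nu u|^2=d^2|\Delta_\mathbf{a} u|^2$ and tangential derivatives of $u$; the tangential pieces are absorbed by trace inequalities and Cauchy--Schwarz either into a small multiple of $\int_0^T\|\nabla_\mathbf{a} u\|_{0,\Omega}^2\,dt$ or, using $d\ge d_0$ on $\gamma_0$, into $C\int_0^T\|\sqrt{d}\,\Delta_\mathbf{a} u\|_{0,\Gamma_0}^2\,dt$. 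Taking $T$ large enough finally absorbs the bracket term into the left-hand side in the standard way.

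The main obstacle lies in the magnetic perturbation terms generated by commuting $\nabla$ with $i\mathbf{a}$ and with multiplication by $m$: these take the form $\int_0^T\int_\Omega \bigl(|\mathbf{a}|\,|\nabla u|\,|u|+(|\mathbf{a}|^2+|\operatorname{div}\mathbf{a}|)|u|^2\bigr)\,dx\,dt$ plus analogous boundary remainders on $\Gamma_1$. The smallness $\|\mathbf{a}\|_\infty\le\varsigma$, together with the equivalence of $\|\nabla_\mathbf{a}\cdot\|_{0,\Omega}$ and $\|\nabla\cdot\|_{0,\Omega}$ on $V$ granted by $\varsigma\le 1/(2\varkappa(V))$ via the argument leading to \eqref{pi}, is precisely what allows these remainders to be absorbed into the principal term $\int_0^T\|\nabla_\mathbf{a} u\|_{0,\Omega}^2\,dt$ after fixing $\varsigma$ sufficiently small as a function of $x_0$ and $\Omega$; this is the role of $\varsigma$ in the statement.
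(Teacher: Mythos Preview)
Your overall strategy---derive an observability inequality for the damped system by the multiplier $m\cdot\nabla_\mathbf{a} u$ and then convert it into exponential decay via the dissipation identity---is a legitimate alternative to what the paper does. The paper instead works with a Lyapunov functional: it sets $\mathscr{E}^2_{u_0}(t)=\Im(u(t)\mid m\cdot\nabla u(t))_{0,\Omega}$, computes $\frac{d}{dt}\mathscr{E}^2_{u_0}$ (Lemma~3.1), bounds $\Re(\Delta_\mathbf{a}u\mid m\cdot\nabla u)_{0,\Omega}$ (Lemma~3.2), and shows that $\mathcal{E}^{2,\epsilon}=\mathcal{E}^2+\epsilon\mathscr{E}^2$ satisfies $\frac{d}{dt}\mathcal{E}^{2,\epsilon}\le -c\epsilon\,\mathcal{E}^{2,\epsilon}$ for small $\epsilon$. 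Both routes rest on the same Rellich/Morawetz identity, and your integrated-in-time version and the paper's differential version are two standard ways to exploit it; the paper's choice avoids having to take $T$ large. Note also that the paper fixes $d(x)=m(x)\cdot\nu(x)$ at the start of the proof, which makes the $\Gamma_0$ boundary algebra cleanest.

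However, there is a real gap in your handling of the $\Gamma_0$ boundary terms. You write that ``the tangential pieces are absorbed by trace inequalities\ldots into a small multiple of $\int_0^T\|\nabla_\mathbf{a}u\|_{0,\Omega}^2\,dt$''. Trace inequalities control $\|u\|_{0,\Gamma_0}$ by $\|\nabla_\mathbf{a}u\|_{0,\Omega}$, but they do \emph{not} control $\|\nabla_T u\|_{0,\Gamma_0}$; that would require $H^{3/2}$ regularity and would carry no smallness. What actually makes the argument close is a structural cancellation in the Rellich identity itself: on $\Gamma_0$ (where $\mathbf{a}=0$) the multiplier produces
\[
2\Re\big(\partial_\nu u\mid m\cdot\nabla u\big)_{0,\Gamma_0}-\big(m\cdot\nu\mid |\nabla u|^2\big)_{0,\Gamma_0},
\]
and splitting the first term with Young's inequality weighted by $m\cdot\nu$ yields a contribution $+(m\cdot\nu)|\nabla_T u|^2$ that exactly cancels the $-(m\cdot\nu)|\nabla_T u|^2$ coming from the second term. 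What survives is bounded by $\|m\|_\infty^2\,(m\cdot\nu)|u_t|^2=\|m\|_\infty^2\,d\,|\Delta_\mathbf{a}u|^2$, i.e.\ by the dissipation. This is precisely how the paper proceeds (inequality~(3.14)); without identifying this cancellation, your boundary estimate on $\Gamma_0$ does not go through as stated. Once you replace the appeal to trace inequalities by this algebraic absorption, your observability route becomes valid.
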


\subsection{State of art}
Observability inequalities for the Schr\"odinger equation were established by Machtyngier \cite{mach} by the multiplier method. The corresponding exponential stabilization results are due to Machtyngier and Zuazua \cite{zuazua}.  Our observability inequalities together with exponential stabilisation extend those in \cite{mach,zuazua}.

Under the so-called geometric control condition, Lebeau \cite{lebeau} showed that the Schr\"odinger equation is exactly controlable (or equivalently exactly observable) for an arbitrary fixed time (see also Phung \cite{phung}, Laurent \cite{laurent} and Dehman, G\'erard and Lebeau \cite{DGL} for the nonlinear case). In the case of a square, Ramdani, Takahashi, Tenenbaum and Tucsnak \cite{RTTT} obtained an observability inequality by a spectral method which is build on the fact that observability is equivalent to an observality resolvent estimate, known also as Hautus test. This equivalence was first proved by Burq and Zworski \cite{burq2} (see also Miller \cite{miller}).

Early observability estimates for the Schr\"odinger equation on torus were established by Haraux \cite{Haraux} and Jaffard \cite{Jaffard} in two dimensions and without potentials. The case of Schr\"odinger equation on spheres and Zoll manifolds was studied in Maci\`a \cite{macia}, Marci\`a and Rivi\`ere \cite{MR1,MR2}. The observability inequalities for the Schr\"odinger equation  on the torus and the disk was also considered by Anantharaman, Fermanian-Kammerer and Maci\`a \cite{AFM},  Anantharaman and M. L\'eautaud \cite{AL},   Anantharaman, M. L\'eautaud and Maci\`a \cite{ALM}, Anantharaman and Maci\`a \cite{AM}. Its is worth mentioning  that the results in \cite{AFM, ALM, AM} allow time-dependent potentials and these results hold without any geometric condition on the observation set.

Exact observability inequalities for the (magnetic) wave equation can transferred to observability inequalities for the (magnetic) Schr\"odinger equation and vice versa via a transmutation method (see Miller \cite{miller} and references therein) or by an abstract framework consisting in transforming a second order evolution equation into a first order evolution equation (see \cite[Theorem 6.7.5 and Proposition 6.8.2]{tucsnakbook} for more details). 

There is wide literature on control, observability and stabilization for the wave equation. We only quote the following few reference \cite{blr, burq3, fursikov, gr, zk, Ro}.

\subsection{Outline}

The rest of this text is organized as follows. Section 2 is devoted to establish logarithmic decay of each of the energies $\mathcal{E}_{u_0}^j$, $j=1,2,3$. The main step consists in proving a resolvent estimate via elliptic Carleman inequalities. Logarithmic energy decay is obtained by using an abstract theorem guaranteeing such decay when the resolvent satisfies some estimates.  We note that the logarithmic stability results we establish in Section 2 hold without any geometric condition. We revisit in Section 3 the multiplier method with the objective to extend the existing results for the Sch\"ordinger equation to the magnetic Sch\"ordinger equation, provided that the magnetic potential satisfies certain conditions. In Section 3, we need the usual geometric conditions on the control subregion. Namely, the boundary control region must contain a  part of the boundary enlightened by a point in the space. For the internal control region, its boundary must contain again a part of the boundary enlightened by a point in the space. In the last section, we added supplementary comments. Precisely, we give an exponential stabilization estimate based on a direct application of a Carleman inequality and an observability inequality in a product space.

\section{Logarithmic stabilization} \label{carleman}

We firstly recall some interior Carleman estimates as well as  boundary Carleman estimates. 
For this last case we have several estimates depending 
on the a priori knowledge we have on traces. Next, we apply these inequalities in order to get resolvent estimates on imaginary axis, yielded to obtain
energy decay of logarithmic type.

\subsection{Carleman estimates}

Carleman estimates can be viewed as weighted energy estimates with a large parameter. The crucial assumption is the sub-ellipticity condition introduced in this context by
H\"ormander \cite{Hormander}. 

Henceforth 
\[
X=(-2,2)\times \Omega\;\; \mbox{and}\;\; L=(-2,2)\times \Gamma.
\]

Let $P$ be equal to the Laplace operator plus an  operator of order 1 with bounded coefficients. The principal symbol of $P$ is then $p(y,\eta)=|\eta|^2$. 

Set, for $\varphi \in \Cinf(\mathbb{R}^{n+1},\mathbb{R})$,  
\[
p_\varphi(y,\eta,\tau)=p(y,\eta +i\tau \nabla\varphi(y)).
\]

\begin{definition}  
 \label{def: sub-ellipticity}
 Let $\mathcal{O}$ be a bounded open set in $\mathbb{R}^{n+1}$ and $\varphi \in \Cinf(\mathbb{R}^{n+1},\mathbb{R})$. We say that $\varphi $ satisfies the  {\em
   sub-ellipticity} condition in $\overline{\mathcal{O}}$ if $|\nabla \varphi|>0$ in $\overline{\mathcal{O}}$
 and 
  \begin{equation}
    \label{eq: sub-ellipticity}
    p_\varphi(y,\eta,\tau)=0,\; (y,\eta) \in \overline{\mathcal{O}} \times\mathbb{R}^{n+1},\; \tau>0\;
   \imp        \;  \{\Im p_\varphi ,\Re p_\varphi \}(y,\eta ,\tau) >0,
 \end{equation}
 where $\{\cdot ,\cdot\}$ is the usual Poisson bracket.
\end{definition}

\begin{remark}
{\rm
	\label{rem: construction sub-ellipticity function}
Note that the sub-ellipticity condition is not really too restrictive. To see that, pick $\psi \in \Cinf(\mathbb{R}^{n+1},\mathbb{R})$ such that $\nabla \psi(y)\ne 0$ for every $y\in 
\overline{\mathcal{O}}$. Then $\varphi(y)=e^{\lambda\psi(y)}$ satisfies obviously the sub-ellipticity property  in $\overline{\mathcal{O}}$  if $\lambda$ is chosen 
sufficiently large. This gives a method to construct a weight  function having  the  
   sub-ellipticity property in $\overline{\mathcal{O}}$ but other choices could be possible.
   }
\end{remark}


\subsubsection{Interior Carleman estimate}

The following Carleman estimate is classical and we can find a proof in H\"ormander~\cite[Theorem 8.3.1]{Hormander}.
\begin{theorem}\label{t2.1}
Let $U$ be an open subset of $X$ and assume that $\varphi $ obeys to the sub-ellipticity condition in $\overline{U}$. 
Then there exist
$C>0$ and $\tau_0>0$, such that
\begin{equation}\label{ice}
\tau^3\| e^{\tau\varphi} f \| _{0,X} ^2+\tau\| e^{\tau\varphi} \nabla f \| _{0,X}^2\le C\| e^{\tau\varphi}  Pf \|_{0,X}^2,
\end{equation}
for all $\tau\ge \tau_0$, $f\in\Con_0^\infty(U)$.
\end{theorem}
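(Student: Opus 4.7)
The plan is to carry out the classical H\"ormander conjugation-plus-commutator argument. First, set $v = e^{\tau\varphi}f$ and introduce the conjugated operator $P_\varphi = e^{\tau\varphi} P e^{-\tau\varphi}$, so that $e^{\tau\varphi} Pf = P_\varphi v$ with $v$ compactly supported in $U \subset X$. A direct computation shows that $P_\varphi$ is a second-order differential operator in $(\partial_y, \tau)$ whose principal symbol is $p_\varphi(y,\eta,\tau) = p(y,\eta + i\tau\nabla\varphi) = q_2 + iq_1$, with $q_2 = |\eta|^2 - \tau^2|\nabla\varphi|^2$ and $q_1 = 2\tau\,\eta\cdot\nabla\varphi$, modulo a first-order remainder in $(\eta,\tau)$. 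Decompose $P_\varphi = A + iB + R$, where $A = (P_\varphi + P_\varphi^*)/2$ and $iB = (P_\varphi - P_\varphi^*)/2$ are the self-adjoint and anti-self-adjoint parts, with real principal symbols $q_2$ and $q_1$ respectively, and $R$ a remainder of order one.

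Since $v$ has compact support in $U$, all boundary terms in the integrations by parts vanish, and expanding the norm gives the key identity
\[
\|P_\varphi v\|_{0,X}^2 = \|Av\|_{0,X}^2 + \|Bv\|_{0,X}^2 + (i[A,B]v,v)_{0,X} + \mathcal{R}(v),
\]
where $\mathcal{R}(v)$ collects the contributions from $R$ and the sub-leading cross terms and is controlled by $C\tau\|v\|_{1,X}^2$. The commutator $i[A,B]$ is formally self-adjoint with real principal symbol $\{q_2,q_1\}$. The sub-ellipticity assumption \eqref{eq: sub-ellipticity} asserts that $\{q_2,q_1\} > 0$ on the characteristic variety $\{q_2 = q_1 = 0,\ \tau > 0\}$, and a compactness/homogeneity argument on $\overline{U}$ upgrades this to a pointwise symbol inequality
\[
c\,\tau(|\eta|^2 + \tau^2) \le q_2^2 + q_1^2 + C\tau\{q_2,q_1\}
\]
valid for all $(y,\eta,\tau) \in \overline{U}\times\mathbb{R}^{n+1}\times[1,\infty)$, modulo lower-order terms.

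The central step is to promote this symbol inequality to an operator inequality via the sharp G\aa rding inequality, yielding
\[
c'\bigl(\tau^3\|v\|_{0,X}^2 + \tau\|\nabla v\|_{0,X}^2\bigr) \le \|Av\|_{0,X}^2 + \|Bv\|_{0,X}^2 + \tau(i[A,B]v,v)_{0,X} + C\tau\|v\|_{0,X}^2.
\]
Combining with the identity above and choosing $\tau_0$ large enough to absorb both $\mathcal{R}(v)$ and the sub-leading $C\tau\|v\|_{0,X}^2$ into the left-hand side, we conclude $\tau^3\|v\|_{0,X}^2 + \tau\|\nabla v\|_{0,X}^2 \le C\|P_\varphi v\|_{0,X}^2$. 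Translating back via $\nabla v = e^{\tau\varphi}(\nabla f + \tau f\nabla\varphi)$, the cross term $\tau^2\|\nabla\varphi\|_\infty^2\|e^{\tau\varphi}f\|_{0,X}^2$ is absorbed into $\tau^3\|v\|_{0,X}^2$ for $\tau$ large, and \eqref{ice} follows. I expect the main obstacle to be the passage from the symbol inequality to the operator inequality with the correct powers of $\tau$: away from the characteristic set the pair $(A,B)$ is elliptic and bounds come directly, but near it one must exploit the strict positivity of the Poisson bracket via a quadratic-form argument to absorb $q_2^2 + q_1^2$ into the positive commutator, while carefully tracking $\tau$-dependence so that remainders stay of strictly lower order.
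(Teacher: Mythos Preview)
The paper does not prove this theorem at all: it simply states that the estimate is classical and refers the reader to H\"ormander~\cite[Theorem~8.3.1]{Hormander}. Your proposal is precisely the classical H\"ormander conjugation--commutator argument that this reference contains, so in substance you are reproducing exactly the proof the paper invokes. One small point to watch when you write it out in full: the sub-ellipticity hypothesis in Definition~\ref{def: sub-ellipticity} is $\{\Im p_\varphi,\Re p_\varphi\}>0$, i.e.\ $\{q_1,q_2\}>0$, whereas the commutator term arising from $\|(A+iB)v\|^2$ is $(i[A,B]v,v)$ with principal symbol $\{q_2,q_1\}=-\{q_1,q_2\}$; the sign works out because the relevant cross term in the expansion is $2\Re(iBv,Av)=([B,A]v,iv)$, but you should track this carefully so that the positivity hypothesis is used with the correct orientation.
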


\subsubsection{Boundary Carleman estimates}
For simplicity sake's, we use in the sequel the notation
\[
Y=(-2,2)\times \overline{\Omega}.
\]
Let $y_0\in L$ and $\mathcal{O}$ be a neighborhood of $y_0$ in $(-2,2)\times \mathbb{R}^{n} $.
We  say that  $f\in \overline{ \Con_0^\infty}(\mathcal{O}_{|X})    $ if  there exists $g\in \Con_0^\infty(\mathcal{O})$ such that $f=g_{|Y}$. In particular 
$f\in\Con^\infty(Y)$. 
This definition allows functions with non null traces on $\partial X$ but with null traces on 
$\partial ( \mathcal{O}\cap X) \setminus L$. The following theorem is proved in~\cite[Proposition 1]{Lebeau-Rob97}.

\begin{theorem}\label{t2.2}
Let $y_0\in\partial X$ and $\mathcal{O}$ a neighborhood of $y_0$ in $(-2,2)\times \mathbb{R}^n$ and assume that $\varphi $ satisfies
 the sub-ellipticity condition in $\overline{\mathcal{O}\cap X}$. We also assume that $\partial_\nu \varphi(y)\ne 0$  in $\overline{\partial \mathcal{O}\cap\partial X}$. Then there exist
$C>0$ and $\tau_0>0$, such that
\begin{multline*}
\hskip 1cm \tau^3\| e^{\tau\varphi} f \|_{0,X} ^2
+\tau\| e^{\tau\varphi} \nabla f \| _{0,X}^2
+\tau\| e^{\tau\varphi}\nabla f\|^2_{0,L}
\\
 \le C\left(\| e^{\tau\varphi}  Pf \|_{0,X} ^2
+\tau^3\| e^{\tau\varphi}f\|^2_{0,L}
+\tau\| e^{\tau\varphi}\partial_\nu f\|^2_{0,L}\right),\hskip 2cm
\end{multline*}
for all $\tau\ge \tau_0$, $f\in \overline{ \Con_0^\infty}(\mathcal{O}_{| X})  $.
\end{theorem}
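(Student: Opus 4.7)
The statement is the classical Lebeau--Robbiano boundary Carleman estimate for a second order operator with principal part $\Delta$, so the plan is to follow the standard route via conjugation and a tangential pseudodifferential calculus up to the boundary. First, I would flatten the boundary near $y_0$ by a local change of variables so that $\mathcal{O}\cap X$ is identified with a half-ball $\{x_n>0\}$ and $P$ becomes an operator with principal symbol $\eta_n^2+r(y,\eta')$ where $r$ is positive definite in $\eta'$. I would then conjugate: set $v=e^{\tau\varphi}f$ and $P_\varphi=e^{\tau\varphi}Pe^{-\tau\varphi}$, and split $P_\varphi=Q_1+iQ_2$ into its self-adjoint and skew-adjoint parts (for the $L^2$ inner product on the half-space). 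Expanding $\|P_\varphi v\|_{0,X}^2=\|Q_1v\|_{0,X}^2+\|Q_2v\|_{0,X}^2+i([Q_1,Q_2]v,v)_{0,X}+\text{boundary terms}$ is the standard starting identity; the goal is to show that the right hand side controls $\tau^3\|v\|_{0,X}^2+\tau\|\nabla v\|_{0,X}^2+\tau\|\nabla v\|_{0,L}^2$ up to the allowed right-hand side terms $\tau^3\|v\|_{0,L}^2+\tau\|\partial_\nu v\|_{0,L}^2$.

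The interior gain comes from the sub-ellipticity hypothesis. The principal symbol of $\tau^{-1}[Q_1,Q_2]$ is (up to sign and factor) the Poisson bracket $\{\Im p_\varphi,\Re p_\varphi\}$, and sub-ellipticity says it is strictly positive on the characteristic set of $p_\varphi$. A standard Gårding-type argument (microlocal partition separating a conic neighborhood of $\{p_\varphi=0\}$ from its complement, where ellipticity of $|p_\varphi|^2$ gives the estimate directly) then yields
\[
\tau^3\|v\|_{0,X}^2+\tau\|\nabla v\|_{0,X}^2\le C\bigl(\|P_\varphi v\|_{0,X}^2+\text{boundary terms}\bigr),
\]
valid for $v$ supported in the small cylinder coming from $\mathcal{O}$ and for $\tau\ge \tau_0$ large. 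Returning to $f=e^{-\tau\varphi}v$ then produces the desired interior terms after absorbing lower-order contributions by choosing $\tau_0$ large.

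The boundary analysis is the main technical point. Writing $P_\varphi$ in the form $D_{x_n}^2 + 2a(y,D')D_{x_n}+b(y,D')$ (with $D'$ the tangential derivatives), I would factor formally $P_\varphi=(D_{x_n}-\lambda_+)(D_{x_n}-\lambda_-)+\text{lower order}$ where $\lambda_\pm(y,\eta',\tau)$ are the two roots of the characteristic polynomial. The boundary region splits into an elliptic zone (roots with nonzero imaginary parts of definite sign) and a hyperbolic/glancing zone (real roots). In the elliptic zone one gets elliptic boundary estimates directly. In the hyperbolic/glancing zone one uses the hypothesis $\partial_\nu\varphi\ne 0$: it ensures that $\partial_{\eta_n}p_\varphi$ does not vanish on the characteristic variety at the boundary, which gives a strict sign to the boundary contribution $\pm\tau(\partial_\nu\varphi)(|\partial_\nu v|^2 - \tau^2 r(y,0,\eta')|v|^2)$ after integration by parts. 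Combining the zones via a tangential microlocal partition produces the $\tau\|\nabla v\|_{0,L}^2$ term on the left at the cost of $\tau^3\|v\|_{0,L}^2+\tau\|\partial_\nu v\|_{0,L}^2$ on the right.

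I expect the hard part to be exactly this boundary analysis and the glueing of the microlocal zones, since the sign of the boundary quadratic form must be obtained uniformly, and one must be careful that the remainders from the symbolic factorization and from commutators are absorbed by the dominant $\tau^3$ and $\tau$ terms when $\tau$ is large enough. Finally, using a partition of unity one passes from the local half-space estimate to the estimate on $X$ for functions in $\overline{\Con_0^\infty}(\mathcal{O}_{|X})$, since such functions vanish on $\partial(\mathcal{O}\cap X)\setminus L$, so only the portion $L$ of the boundary contributes. This is exactly the statement of Theorem 2.2, and I would simply invoke \cite{Lebeau-Rob97} at the end to avoid reproducing the full symbolic computation.
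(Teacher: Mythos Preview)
The paper does not prove this theorem at all: it simply states it as a known result and cites \cite[Proposition 1]{Lebeau-Rob97}. Your proposal is a reasonable outline of the standard Lebeau--Robbiano argument (conjugation, commutator and G{\aa}rding for the interior gain, tangential microlocal analysis and factorization at the boundary using $\partial_\nu\varphi\ne 0$), and since you also conclude by invoking \cite{Lebeau-Rob97}, you end up at exactly the same place as the paper, only with more detail along the way.
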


This Carleman estimate is useful when we know Dirichlet and Neumann traces of $f$ on a part of the boundary. It allows to estimate the function $f$  in an interior domain 
by its Dirichlet and Neumann traces on a part of the boundary and $Pf$. 

The two next theorems only assume that the knowledge of the Dirichlet trace or Neumann trace. They 
permit to estimate 
the function $f$ up to the boundary by $Pf$ and a priori knowledge of $f$ in a small domain contained in $X$.

Henceforth, $\nabla_T$ denotes the tangential gradient on $\Sigma$. The following theorem is proved in~\cite[Proposition 1]{Lebeau-Rob95}
\begin{theorem}\label{t2.3}
Let $y_0\in\partial X$ and $\mathcal{O}$ a neighborhood of $y_0$ in $(-2,2)\times \mathbb{R}^n$, assume that $\varphi $ satisfies
 the sub-ellipticity condition in $\overline{\mathcal{O}\cap X}$ and $\partial_\nu \varphi(y)< 0$  on
 $\overline{\partial \mathcal{O}\cap\partial X}$. Then there exist
$C>0$ and $\tau_0>0$, such that
\begin{multline*}
\hskip 1cm \tau^3\| e^{\tau\varphi} f \| _{0,X} ^2
+\tau\| e^{\tau\varphi} \nabla f \| _{0,X} ^2
+\tau\| e^{\tau\varphi}\partial_\nu f\|^2_{0,L} \\
  \le C\left(\| e^{\tau\varphi}  Pf \|_{0,X} ^2
+\tau^3\| e^{\tau\varphi}f\|^2_{0,L }
+\tau\| e^{\tau\varphi}\nabla _Tf\|^2_{0,L}\right),\hskip 2cm
\end{multline*}
for all $\tau\ge \tau_0$, $f\in \overline{ \Con_0^\infty}(\mathcal{O}_{| X})  $.
\end{theorem}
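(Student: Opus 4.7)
I would follow the classical strategy for boundary Carleman estimates in the style of H\"ormander and Lebeau--Robbiano. Since the statement is local near $y_0$, I would first introduce adapted coordinates $(y', y_n)$ in which $y_n > 0$ parametrizes the interior of $X$ near $y_0$, the portion of $L$ inside $\mathcal{O}$ corresponds to $\{y_n = 0\}$, and $\partial_\nu = -\partial_{y_n}$. In these coordinates $P = D_{y_n}^2 + R(y, D_{y'}) + \text{l.o.t.}$, with $R$ a second-order tangential operator whose principal symbol $r(y,\xi')$ is positive-definite in $\xi'$. I would then set $g = e^{\tau\varphi} f$ and work with the conjugated operator $Q = e^{\tau\varphi} P e^{-\tau\varphi}$, whose principal symbol is $p_\varphi(y,\eta,\tau)$.

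Next, decompose $Q = A + iB$ with $A$, $B$ formally self-adjoint, of real principal symbols $\Re\, p_\varphi$ and $\Im\, p_\varphi$. The sub-ellipticity hypothesis in $\overline{\mathcal{O}\cap X}$ gives $\{\Re\, p_\varphi, \Im\, p_\varphi\} > 0$ on the characteristic set of $p_\varphi$, and hence, by H\"ormander's standard argument, a G\aa rding-type lower bound
\[
\|Ag\|_{0,X}^2 + \|Bg\|_{0,X}^2 + \tau^{-1}\big(i[A,B]g,g\big)_{0,X} \gtrsim \tau^3\|g\|_{0,X}^2 + \tau\|\nabla g\|_{0,X}^2.
\]
The identity that drives the proof is the integration-by-parts formula
\[
\|Qg\|_{0,X}^2 = \|Ag\|_{0,X}^2 + \|Bg\|_{0,X}^2 + \big(i[A,B]g, g\big)_{0,X} + \mathcal{B},
\]
where $\mathcal{B}$ is a quadratic form in the traces of $g$ and $D_{y_n} g$ on $L$.

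The heart of the proof lies in organizing $\mathcal{B}$. Using the sign condition $\partial_\nu \varphi < 0$ (equivalently $\partial_{y_n}\varphi > 0$ at $y_n = 0$), I would perform a tangential microlocal decomposition, splitting the frequency space $\xi'$ into an elliptic region, a hyperbolic region, and a glancing region relative to the characteristic equation of $Q$ in $\xi_n$. In the elliptic and hyperbolic regions, a microlocal factorization $Q \sim (D_{y_n} - \lambda_+)(D_{y_n} - \lambda_-)$ combined with the sign condition makes it possible to transfer the normal-derivative trace term $\tau\|e^{\tau\varphi}\partial_\nu f\|^2_{0,L}$ to the left-hand side, at the cost of tangential-derivative and Dirichlet-trace terms on the right-hand side.

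The main obstacle will be the glancing region, where the factorization degenerates and no scalar multiplier alone suffices. There one must exploit the sub-ellipticity directly, together with carefully chosen tangential symbol cut-offs, to absorb the offending boundary contributions into the interior G\aa rding bound. Once the three microlocal regimes are patched by a tangential partition of unity and the lower-order errors coming from the conjugation and from $P - D_{y_n}^2 - R$ are absorbed by taking $\tau$ large enough, undoing the conjugation yields the announced inequality. The assumption $f \in \overline{\Con_0^\infty}(\mathcal{O}_{|X})$ is precisely what ensures that $g$ vanishes on $\partial\mathcal{O}\setminus L$, so that the boundary quadratic form $\mathcal{B}$ reduces to an integral over $L$.
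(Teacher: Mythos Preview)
The paper does not give its own proof of this theorem: it is quoted verbatim as \cite[Proposition~1]{Lebeau-Rob95} and used as a black box. Your outline is a faithful sketch of the Lebeau--Robbiano argument behind that reference --- local boundary coordinates, conjugation $g=e^{\tau\varphi}f$, the decomposition $Q=A+iB$ with the commutator $[A,B]$ controlled via sub-ellipticity, and a tangential microlocal analysis of the boundary quadratic form using the sign $\partial_\nu\varphi<0$ --- so in substance you are reproducing the approach of the cited source rather than diverging from the paper.

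One remark on the sketch itself: for this particular estimate (Dirichlet data given, normal trace to be recovered) the glancing region is less of an obstruction than you suggest. The condition $\partial_\nu\varphi<0$ makes the boundary quadratic form coercive in $D_{y_n}g$ after the Dirichlet trace has been moved to the right, uniformly across the microlocal zones; the delicate glancing analysis is really needed for the companion estimate (Theorem~\ref{t2.4}, Neumann data given) where one must gain a full trace on the left. So your plan would go through, but the emphasis on glancing as ``the main obstacle'' is slightly misplaced for this statement.
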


The following theorem is a consequence of~\cite[Lemma 4]{Lebeau-Rob97}.
\begin{theorem}\label{t2.4}
Let $y_0\in\partial X$ and $\mathcal{O}$ a neighborhood of $y_0$ in $(-2,2)\times \mathbb{R}^n$, assume that $\varphi $ satisfies
 the sub-ellipticity condition in $\overline{\mathcal{O}\cap X}$ and $\partial_\nu \varphi(y)< 0$  on
 $\overline{\partial \mathcal{O}\cap\partial X}$. Then there exist
$C>0$ and $\tau_0>0$, such that
\begin{multline*}
\hskip 1cm\tau^3\| e^{\tau\varphi} f \|_{0,X}  ^2
+\tau\| e^{\tau\varphi} \nabla f \|_{0,X}  ^2
+\tau^3\| e^{\tau\varphi}f\|^2_{0,L}
+\tau\| e^{\tau\varphi}\nabla f\|^2_{0,L}
\\
 \le C\left(\| e^{\tau\varphi}  Pf \| _{0,X} ^2
+\tau\| e^{\tau\varphi}\partial_\nu f\|^2_{0,L}\right), \hskip 2cm
\end{multline*}
for all $\tau\ge \tau_0$, $f\in \overline{ \Con_0^\infty}(\mathcal{O}_{|X })  $.
\end{theorem}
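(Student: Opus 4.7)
The plan is to derive Theorem~\ref{t2.4} from Lemma~4 of~\cite{Lebeau-Rob97} via the same pseudo-differential Carleman machinery underlying Theorems~\ref{t2.2}--\ref{t2.3}, but with the roles of the Dirichlet and Neumann traces interchanged. First, I localize: since $f\in\overline{\Con_0^\infty}(\mathcal{O}_{|X})$ has compact support in $\overline{\mathcal{O}}\cap Y$, a smooth partition of unity reduces the inequality to one in a small coordinate patch around a point $y_0\in L$. In normal coordinates $(y',y_n)$ adapted to $L$, with $y_n>0$ in $X$ and $\partial_\nu=-\partial_{y_n}$ on $L$, the operator $P$ has principal part $D_{y_n}^2+r(y,D_{y'})$ with $r$ the tangential principal symbol (positive definite in $\xi'$), and the sign assumption $\partial_\nu\varphi<0$ becomes $\partial_{y_n}\varphi>0$ at $y_n=0$.

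Next I conjugate by the weight: set $g=e^{\tau\varphi}f$ and $P_\varphi=e^{\tau\varphi}Pe^{-\tau\varphi}$, whose principal symbol is $p_\varphi(y,\xi,\tau)=p(y,\xi+i\tau\nabla\varphi)$. The sub-ellipticity condition~\eqref{eq: sub-ellipticity} makes $\{\Re p_\varphi,\Im p_\varphi\}$ strictly positive on the characteristic set, which is the source of the interior positive commutator. Near the boundary I factor, modulo a smoothing remainder and lower order,
\[
P_\varphi \equiv (D_{y_n}-M_+(y,D_{y'},\tau))(D_{y_n}-M_-(y,D_{y'},\tau)),
\]
where $M_\pm$ are tangential first-order pseudo-differential operators in the large-parameter calculus. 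On the elliptic tangential region, the imaginary parts of the principal symbols $m_\pm$ have opposite nonzero signs dictated by $\partial_{y_n}\varphi>0$; on the glancing region the required positivity is furnished by the sub-ellipticity via a G\aa rding-type inequality.

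The decisive step is to pair $P_\varphi g$ against $(D_{y_n}-M_-)g$ and integrate by parts in $y_n\in(0,\infty)$. The interior positive commutator yields $\tau^3\|g\|_{0,X}^2+\tau\|\nabla g\|_{0,X}^2$ on the left, exactly as in the proof of Theorems~\ref{t2.2}--\ref{t2.3}. The novelty is the boundary quadratic form that appears at $\{y_n=0\}$ on the Cauchy data $(g,D_{y_n}g)|_L$: under $\partial_{y_n}\varphi>0$ it turns out to be positive definite on the Dirichlet block, so $\tau^3\|g\|_{0,L}^2+\tau\|\nabla g\|_{0,L}^2$ land on the left as well, leaving only $\tau\|D_{y_n}g\|_{0,L}^2$ to be absorbed on the right alongside $\|P_\varphi g\|_{0,X}^2$. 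Undoing the conjugation $f=e^{-\tau\varphi}g$ then produces the weighted inequality of the statement.

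The main technical obstacle—which is what Lemma~4 of~\cite{Lebeau-Rob97} packages—is the uniform control of the signature of this boundary quadratic form across the elliptic, glancing, and hyperbolic tangential zones. The sign condition $\partial_\nu\varphi<0$ is used in a stronger way than in Theorem~\ref{t2.2} (which only required $\partial_\nu\varphi\ne 0$) and plays the opposite role to its use in Theorem~\ref{t2.3}: here it forces the Dirichlet block of the boundary form to be coercive, which is precisely what allows the Neumann trace alone to suffice on the right-hand side.
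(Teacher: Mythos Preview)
The paper does not actually prove Theorem~\ref{t2.4}: it merely records that the statement ``is a consequence of~\cite[Lemma 4]{Lebeau-Rob97}'' and moves on. Your proposal is therefore not in conflict with anything in the paper; you have supplied a sketch of the standard derivation (localization, conjugation, factorization of $P_\varphi$ in the tangential large-parameter calculus, and analysis of the boundary quadratic form under the sign condition $\partial_\nu\varphi<0$) that the paper simply delegates to the cited reference. In that sense your write-up is consistent with---and more detailed than---what the paper offers.
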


\subsubsection{Global Carleman estimates}

We can patch together the interior and boundary Carleman estimates to obtain a global one. The global Carleman estimate we obtain will be very useful to tackle the stabilization issue for system \eqref{schint1}.

\begin{theorem}
	\label{th: Carleman global interior Dirichlet}
Let $Z$ be a open subset of $X$ and assume that $\varphi $  satisfies
 the sub-ellipticity condition in  $Y\setminus Z$. Assume moreover that $\partial_\nu \varphi(y)< 0$  in $L$. Then there exist
$C>0$ and $\tau_0>0$, such that
\begin{align*}
&C\left( \tau^3\| e^{\tau\varphi} f \|_{0,X}  ^2
+\tau\| e^{\tau\varphi} \nabla f \|_{0,X}  ^2
+\tau\| e^{\tau\varphi}\partial_\nu f\|^2_{0,L} \right)
\\
& \hskip 2cm \le \| e^{\tau\varphi}  Pf \|_{0,X} ^2
+\tau^3\| e^{\tau\varphi}f\|^2_{0,L}
+\tau\| e^{\tau\varphi}\nabla _Tf\|^2_{0,L}
\\
&\hskip 7cm + \| e^{\tau\varphi} f \| ^2_{0,Z}+\| e^{\tau\varphi} \nabla f \| ^2_{0,Z},
\end{align*}
for all $\tau\ge \tau_0$, $f\in { \Con^\infty}(\overline{X})  $.
\end{theorem}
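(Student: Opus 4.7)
\emph{Plan.} The strategy is to patch together Theorem~\ref{t2.1} (interior) and Theorem~\ref{t2.3} (boundary case with $\partial_\nu\varphi<0$) through a finite covering of $Y$ together with a partition of unity. Because sub-ellipticity is only imposed on $Y\setminus Z$, the set $Z$ functions as an exempt region on which no local Carleman inequality is applied; the contributions of $f$ there will ultimately appear as the residual terms $\|e^{\tau\varphi}f\|_{0,Z}^2+\|e^{\tau\varphi}\nabla f\|_{0,Z}^2$ on the right-hand side.

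First I would choose an open set $Z'\Subset Z$ and, by compactness of $Y\setminus Z'$, cover it by finitely many open neighborhoods $U_1,\ldots,U_N$ in $(-2,2)\times\mathbb{R}^n$, each taken small enough that either (i) $U_j\Subset X$ and $\varphi$ is sub-elliptic on $\overline{U_j}$ (then Theorem~\ref{t2.1} applies to $\Con_0^\infty(U_j)$), or (ii) $U_j$ meets $L$ with sub-ellipticity on $\overline{U_j\cap X}$ and $\partial_\nu\varphi<0$ on $\overline{\partial U_j\cap L}$ (then Theorem~\ref{t2.3} applies to $\overline{\Con_0^\infty}((U_j)_{|X})$). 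Supplementing with $U_0\Subset Z$ containing $\overline{Z'}$ gives an open cover of $Y$; I then take a partition of unity $\{\chi_j\}_{j=0}^N$ subordinate to it, with $\chi_0\in\Con_0^\infty(Z)$, $\chi_j\in\Con_0^\infty(U_j)$ for $j\ge 1$, and $\sum_{j=0}^N\chi_j=1$ on $Y$.

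Next, for each $j\ge 1$ the function $\chi_j f$ belongs to the function class to which the associated local Carleman estimate applies, and I sum the resulting $N$ inequalities. The identities $P(\chi_j f)=\chi_j Pf+[P,\chi_j]f$ and $\nabla(\chi_j f)=\chi_j\nabla f+(\nabla\chi_j)f$, together with the analogous decompositions of $\partial_\nu(\chi_j f)$ and $\nabla_T(\chi_j f)$ on $L$, let me replace each $\chi_j$ inside the norms at the price of lower-order remainders. Combining with $|f|^2\le(N+1)\sum_{j=0}^N|\chi_j f|^2$ (and the analogous inequalities for $\nabla f$ and $\partial_\nu f$), the LHS reconstructs a constant multiple of $\tau^3\|e^{\tau\varphi}f\|_{0,X}^2+\tau\|e^{\tau\varphi}\nabla f\|_{0,X}^2+\tau\|e^{\tau\varphi}\partial_\nu f\|_{0,L}^2$, while the boundary pieces on the RHS aggregate into $\tau^3\|e^{\tau\varphi}f\|_{0,L}^2+\tau\|e^{\tau\varphi}\nabla_T f\|_{0,L}^2$. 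The contributions from the $j=0$ piece ($\chi_0 f$ supported in $Z$) precisely feed into the residual $\|e^{\tau\varphi}f\|_{0,Z}^2+\|e^{\tau\varphi}\nabla f\|_{0,Z}^2$ on the right.

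The main difficulty is the handling of the commutator remainders $\|e^{\tau\varphi}[P,\chi_j]f\|_{0,X}^2$. Since $[P,\chi_j]$ is a first-order differential operator with coefficients supported in $W_j=\mathrm{supp}(\nabla\chi_j)$, this is bounded by $C(\|e^{\tau\varphi}f\|_{0,W_j}^2+\|e^{\tau\varphi}\nabla f\|_{0,W_j}^2)$, with no factor of $\tau$. Remainders with $W_j\subset Z$ are absorbed into the $Z$-terms on the RHS, while remainders with $W_j\not\subset Z$ are absorbed into the LHS by taking $\tau_0$ large enough: the bounds $\|e^{\tau\varphi}f\|_{0,W_j}^2\le\tau^{-3}\cdot\tau^3\|e^{\tau\varphi}f\|_{0,X}^2$ and $\|e^{\tau\varphi}\nabla f\|_{0,W_j}^2\le\tau^{-1}\cdot\tau\|e^{\tau\varphi}\nabla f\|_{0,X}^2$ show that each such remainder is $O(\tau^{-1})$ times the corresponding LHS quantity. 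After redefining the constant $C$, the inequality holds for all $\tau\ge\tau_0$ and $f\in\Con^\infty(\overline{X})$.
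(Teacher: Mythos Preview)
Your proposal is correct and follows exactly the approach the paper sketches: cover $\overline{X}$ by finitely many neighborhoods, take a subordinate partition of unity, apply Theorem~\ref{t2.1} on interior patches and Theorem~\ref{t2.3} on boundary patches (since $\partial_\nu\varphi<0$ on $L$), and absorb the commutator remainders using the extra powers of $\tau$ on the left-hand side. Your write-up is in fact more detailed than the paper's, which only gives the rough idea and refers to H\"ormander; the one small point to tighten is the choice of $Z'$---you should first use that the sub-ellipticity condition, being stable under small perturbations of $y$, extends from the closed set $Y\setminus Z$ to some $Y\setminus Z'$ with $Z'\Subset Z$, so that the $U_j$ covering $Y\setminus Z'$ can indeed be taken with $\varphi$ sub-elliptic on $\overline{U_j}$.
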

 
We now state  a theorem that we will  use to deal with stabilization issue for systems \eqref{schfront1} and \eqref{schfront1b} 

Set  
\[
L_j=(-2,2)\times \Gamma_j,\;\; j=0,1.
\]
\begin{theorem}
	\label{th: Global Carleman estimate boundary Neumann Dirichlet}
Let $\Lambda$ an open subset of $L_0$. Assume that $\varphi$ satisfies
 the sub-ellipticity condition in  $Y$ and $\partial_\nu \varphi(y)< 0$  in
 $L \setminus \Lambda$.
  Then there exist
$C>0$ and $\tau_0>0$, such that
\begin{align*}
&C\left(\tau^3\| e^{\tau\varphi} f \| _{0,X} ^2
+\tau\| e^{\tau\varphi} \nabla f \|_{0,X}  ^2
+\tau^3 \| e^{\tau\varphi} f\|^2_{0,L } 
+\tau \| e^{\tau\varphi}\nabla f\|^2_{0,L} \right)
\\
&\hskip 2cm \le \| e^{\tau\varphi}  Pf \| _{0,X} ^2
+\tau^3\| e^{\tau\varphi}f|^2_{0,L_1}
+\tau\| e^{\tau\varphi}\nabla _Tf\|^2_{0,L_1 }
\\
&\hskip 4cm +\tau\| e^{\tau\varphi}\partial_\nu f\|^2_{0,L_0 \setminus \Lambda} 
+\tau^3\| e^{\tau\varphi}f\|^2_{0,\Lambda}
+\tau\| e^{\tau\varphi}\partial_\nu f\|^2_{0,\Lambda},
\end{align*}
for all $\tau\ge \tau_0$, $f\in { \Con^\infty}(\overline{X})  $.
\end{theorem}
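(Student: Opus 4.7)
\medskip
\noindent\textbf{Proof proposal.}
The plan is to glue together the three boundary estimates (Theorems~\ref{t2.2}, \ref{t2.3}, \ref{t2.4}) and the interior estimate (Theorem~\ref{t2.1}) via a smooth partition of unity adapted to the sign of $\partial_\nu\varphi$ and to the partition $L=L_0\cup L_1=(L_0\setminus\Lambda)\cup\Lambda\cup L_1$. Since $L\setminus\Lambda$ is relatively closed in $L$ and $\partial_\nu\varphi<0$ on it, by continuity the condition $\partial_\nu\varphi<0$ persists on a slightly larger neighborhood of $L\setminus\Lambda$ in $L$. Using this, I would first choose, for every $y\in\overline{X}$, a small open neighborhood $\mathcal{O}_y$ in $(-2,2)\times\mathbb{R}^n$ tailored to its location:
\begin{itemize}
\item if $y\in X$, take $\mathcal{O}_y\subset X$ (interior case);
\item if $y\in L_1$, shrink $\mathcal{O}_y$ so that $\overline{\partial\mathcal{O}_y\cap\partial X}\subset L_1$ and $\partial_\nu\varphi<0$ on $\overline{\partial\mathcal{O}_y\cap\partial X}$;
\item if $y\in L_0\setminus\overline{\Lambda}$, shrink $\mathcal{O}_y$ so that $\overline{\partial\mathcal{O}_y\cap\partial X}\subset L_0\setminus\Lambda$ and $\partial_\nu\varphi<0$ there;
\item if $y\in\Lambda$, shrink $\mathcal{O}_y$ so that $\overline{\partial\mathcal{O}_y\cap\partial X}\subset\Lambda$.
\end{itemize}
By compactness of $\overline{X}$, extract a finite subcover $\{\mathcal{O}_{y_j}\}_{j=0}^{N}$, and let $(\chi_j)$ be an associated partition of unity with $\chi_j\in\Cinf_0(\mathcal{O}_{y_j})$ and $\sum_j\chi_j^{\,2}=1$ on $\overline{X}$.

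Next, I would apply the appropriate local Carleman inequality to $f_j=\chi_j f$. For interior patches, Theorem~\ref{t2.1} yields the bulk estimate. For patches meeting $L_1$, Theorem~\ref{t2.3} yields the bulk estimate and an estimate of $\tau\|e^{\tau\varphi}\partial_\nu f_j\|_{0,L}^2$ in terms of $Pf_j$ plus the Dirichlet datum and the tangential gradient of $f_j$ on $L_1$. For patches meeting $L_0\setminus\Lambda$, Theorem~\ref{t2.4} yields the bulk estimate and the boundary estimates $\tau^3\|e^{\tau\varphi}f_j\|_{0,L}^2+\tau\|e^{\tau\varphi}\nabla f_j\|_{0,L}^2$ in terms of $Pf_j$ plus the Neumann datum of $f_j$ on $L_0\setminus\Lambda$. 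For patches meeting $\Lambda$, Theorem~\ref{t2.2} gives the same $L^2$ boundary control in terms of both traces on $\Lambda$. Adding all of these and using the identity $\sum_j\chi_j^{\,2}=1$ (together with $|\nabla f|^2\lesssim\sum_j|\nabla f_j|^2+|f|^2$ and analogous relations on $L$), the left-hand side of the target inequality emerges.

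The main technical point, and the step I expect to be the real obstacle, is controlling the commutator terms that appear when passing from $Pf_j$ to $\chi_j Pf$: one has $Pf_j=\chi_j Pf+[P,\chi_j]f$, where $[P,\chi_j]$ is a first-order differential operator with $\Cinf$ coefficients supported in $\mathrm{supp}(\nabla\chi_j)$, hence
\[
\|e^{\tau\varphi}[P,\chi_j]f\|_{0,X}^2\le C\bigl(\|e^{\tau\varphi}\nabla f\|_{0,X}^2+\|e^{\tau\varphi}f\|_{0,X}^2\bigr).
\]
After summation these are dominated by $C\bigl(\|e^{\tau\varphi}\nabla f\|_{0,X}^2+\|e^{\tau\varphi}f\|_{0,X}^2\bigr)$, and for $\tau$ large enough they are absorbed by the bulk terms $\tau^3\|e^{\tau\varphi}f\|_{0,X}^2+\tau\|e^{\tau\varphi}\nabla f\|_{0,X}^2$ on the left-hand side. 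Similarly, boundary commutator contributions are swallowed by the $\tau^3\|e^{\tau\varphi}f\|_{0,L}^2+\tau\|e^{\tau\varphi}\nabla f\|_{0,L}^2$ terms on the left once $\tau\geq\tau_0$ is chosen sufficiently large. Finally, on $L_1$ only $f$ and $\nabla_T f$ appear on the right, because on those patches the Neumann datum was absorbed into the left by Theorem~\ref{t2.3}; on $L_0\setminus\Lambda$ only $\partial_\nu f$ appears, by Theorem~\ref{t2.4}; and on $\Lambda$ both $f$ and $\partial_\nu f$ appear, by Theorem~\ref{t2.2}. Collecting these terms delivers the announced inequality for all $f\in\Cinf(\overline{X})$ and $\tau\geq\tau_0$.
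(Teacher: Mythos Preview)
Your approach is essentially the same as the paper's: the paper gives only a sketch, explaining that one covers $\overline{X}$ by finitely many neighborhoods, applies to each $\chi_j f$ the appropriate local Carleman estimate (Theorem~\ref{t2.1} in the interior, and one of Theorems~\ref{t2.2}, \ref{t2.3}, \ref{t2.4} at the boundary, according to which boundary data are to be controlled), and then sums and absorbs the commutators $[P,\chi_j]f$ for $\tau$ large, exactly as you describe. Your write-up is in fact more detailed than the paper's sketch; the only point neither you nor the paper makes explicit is that Theorem~\ref{t2.2} requires $\partial_\nu\varphi\neq 0$, which the hypotheses of the present theorem do not guarantee on all of $\Lambda$ --- in practice one covers the portion of $\Lambda$ where $\partial_\nu\varphi<0$ using Theorem~\ref{t2.4} (its right-hand side is already dominated by the $\Lambda$-terms) and reserves Theorem~\ref{t2.2} for the remaining part.
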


To prove Theorems \ref{th: Carleman global interior Dirichlet} and \ref{th: Global Carleman estimate boundary Neumann Dirichlet} we can  proceed similarly to the proof of \cite[Lemma 8.3.1]{Hormander}. A rough idea of the proof is the following. Assume that we have a Carleman estimate in a neighborhood $\mathcal{U}$ of each point of $\overline{X}$. If $(U_j)$ is a finite covering of $\overline{X}$ of such neighborhoods, we pick $(\chi_j)$ a partition of unity subordinate to this covering. In each $U_j$, we apply the corresponding Carleman estimate to $\chi_j u$, i.e. Theorem \ref{t2.1} if $U_j\subset X$ or  one of Theorem \ref{t2.2}, \ref{t2.3} and  \ref{t2.4} if $U_j\cap \partial X \ne \emptyset$, depending on assumptions we have on boundary terms. Putting together all these estimates in order to get, in a classical way,  Theorems \ref{th: Carleman global interior Dirichlet} and \ref{th: Global Carleman estimate boundary Neumann Dirichlet}.

\begin{remark}
{\rm
	\label{rem: construction weight function}
All the previous theorems still hold if we substitute $P$ by $P$ plus a first order operator $Q$ having bounded coefficients. For that it is enough to observe that 
\[
\| e^{\tau\varphi}  Pf \|_{0,X}\le \| e^{\tau\varphi}  (P+Q)f \|_{0,X}+\| e^{\tau\varphi}  Qf \|_{0,X}
\]
and that the term $\| e^{\tau\varphi}  Qf \|_{0,X}$  can be absorbed by the left hand side of \eqref{ice}, by modifying $\tau_0$ if necessary. 
}
\end{remark}

The assumptions on the weight function may impose some constraints on the topology of $\Omega$. In Theorem~\ref{th: Carleman global interior Dirichlet}, if $\varphi$ satisfies 
$\partial_\nu \varphi(y)< 0$  in $L$, $\varphi$ has a maximum in $X$, thus we have to impose that  this maximum belongs to $Z$. 
 In Theorem~\ref{th: Global Carleman estimate boundary Neumann Dirichlet}, we need $\nabla\varphi\ne0$ in $Y$. This is always possible as long as we do not assume that $\partial_\nu\varphi$ is of constant sign on $Z$. However one can  construct weight functions
 $\varphi$ obeying to the assumptions of the preceding theorems.
 
 \begin{proposition}
 	\label{prop: psi function interior}
 Let $Z$ an open subset of $X$. 
There exists $\psi\in\Con^\infty(\overline{X})$ such that $\partial_\nu\psi<0$ on $L$ and $\nabla\psi\ne0$ in 
$Y\setminus Z$.
 \end{proposition}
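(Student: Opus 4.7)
The plan is to construct $\psi$ by separating the time and space variables, letting a strictly monotone $t$-term kill off any critical points regardless of where $Z$ sits. Specifically, I would look for $\psi$ of the form
\[
\psi(t,x)=f(x)+t
\]
for a spatial function $f\in\Con^\infty(\overline{\Omega})$ chosen so that $\partial_\nu f<0$ on $\Gamma$. This ansatz gives $\nabla\psi=(1,\nabla_x f)$, hence $|\nabla\psi|\ge 1$ throughout $\overline{X}$; in particular, the condition $\nabla\psi\ne 0$ on $Y\setminus Z$ holds trivially, independently of what $Z$ is. Since the outward unit normal on $L=(-2,2)\times\Gamma$ is $(0,\nu(x))$, the boundary requirement reduces to $\partial_\nu\psi=\partial_\nu f<0$ on $\Gamma$.

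The remaining step is to exhibit a smooth function $f$ on $\overline{\Omega}$ with $\partial_\nu f<0$ on $\Gamma$, which is a standard boundary-defining-function construction. The $\Con^\infty$-smoothness of $\Gamma$ gives a tubular neighborhood $\mathcal V$ of $\Gamma$ in $\overline{\Omega}$ on which the distance-to-$\Gamma$ function $\delta$ is smooth, is positive in $\mathcal V\cap\Omega$, vanishes on $\Gamma$, and satisfies $\nabla\delta_{|\Gamma}=-\nu$. Multiplying by a cutoff $\chi\in\Con^\infty(\overline{\Omega})$ that equals $1$ on a thinner collar of $\Gamma$ and is supported in $\mathcal V$, and then extending by $0$, produces $f=\chi\delta\in\Con^\infty(\overline{\Omega})$ with $\partial_\nu f=-1$ on $\Gamma$. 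Setting $\psi(t,x)=f(x)+t$ then satisfies all the required properties.

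I do not anticipate any serious obstacle, since the $\Con^\infty$-smoothness of $\Gamma$ is the only tool actually needed and the set $Z$ plays no essential role under the separated ansatz: the linear $t$-term alone forces $\nabla\psi$ to be nowhere-vanishing in $Y$. A more refined variant (for instance, arranging for $\psi$ to attain its extrema inside $Z$ rather than at $t=\pm 2$, which would be desirable if one wanted the Carleman weight $e^{\lambda\psi}$ to concentrate near $Z$) would instead require a genuine Morse-type construction: starting from a boundary-defining function and using an interior isotopy compactly supported away from $\Gamma$ to move its finitely many critical points into the prescribed open set $Z$. For the statement as given, however, the separated construction above suffices.
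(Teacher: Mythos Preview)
Your proof is correct for the proposition as literally stated, and it takes a genuinely different route from the paper. The paper sketches a Morse-theoretic construction: build $\psi$ with $\partial_\nu\psi<0$ in a collar of $L$, extend it to $Y$, approximate by a Morse function, and then push the finitely many critical points into $Z$ along paths. Your separated ansatz $\psi(t,x)=f(x)+t$ sidesteps all of this because $\partial_t\psi\equiv 1$ forces $\nabla\psi\neq 0$ everywhere in $Y$, so the location of $Z$ is irrelevant. What each approach buys: yours is elementary and needs nothing beyond a boundary-defining function for $\Gamma$. The paper's heavier construction, however, yields a $\psi$ whose critical set is genuinely contained in $Z$ and whose maximum on $\overline{X}$ is attained inside $Z$ rather than at $t=\pm 2$; this is the feature actually used downstream in the proof of Theorem~\ref{th: pb1 carleman approach}, where the weight $\varphi=e^{\lambda(-\beta s^2+\psi)}$ must satisfy the ordering $C_1<C_2<C_3$ that separates the cutoff region $\{3/4\le|s|\le 1\}$ from the core $\{|s|\le 1/2\}$. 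Your linear-in-$t$ term would spoil that ordering (since $\psi$ is larger at $s=1$ than at $s=1/2$), so you are exactly right in your final paragraph that the Morse refinement is what the Carleman application ultimately requires; for the bare statement, though, your argument is complete.
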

 
 \begin{proposition}
  	\label{prop: psi function boundary}
 Let $\Lambda$ be an open subset of $L_0$. 
There exists $\psi\in\Con^\infty(\overline{X})$ such that $\partial_\nu\psi<0$ on
$L \setminus \Lambda$ and $\nabla\psi\ne0$ in $Y$.
 \end{proposition}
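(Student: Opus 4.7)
My plan is to write down an explicit $\psi$ of the form $\psi(t,x) = t + \epsilon\,h(x)$, where $h:\overline\Omega \to \mathbb{R}$ is a smoothed signed distance to $\Gamma$ and $\epsilon > 0$ is small. The linear $t$-term is there solely to ensure $\partial_t\psi \equiv 1$, which trivially forces $\nabla\psi \neq 0$ on all of $\overline X$; the spatial piece $h$ is what delivers the sign condition $\partial_\nu\psi < 0$ on $L$. In fact the construction gives $\partial_\nu\psi < 0$ throughout $L$, so $\Lambda$ plays no active role in the proof -- it appears in the statement only for consistency with the weaker boundary hypothesis used in Theorem~\ref{th: Global Carleman estimate boundary Neumann Dirichlet}.

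First I would fix $\delta_0 > 0$ so that $d(x) = \operatorname{dist}(x,\Gamma)$ is $C^\infty$ on the tubular neighborhood $V_{\delta_0} = \{x \in \overline\Omega : d(x) < \delta_0\}$ (using the smoothness of $\Gamma$), and recall that $\nabla d = -\nu$, in particular $\partial_\nu d|_\Gamma = -1$. Next I pick $\chi \in C^\infty(\mathbb{R})$ with $\chi(s) = s$ for $s$ near $0$ and $\chi(s) = \delta_0/2$ for $s \geq \delta_0/2$, and define
\[
h(x) := \begin{cases} \chi(d(x)), & x \in V_{\delta_0}, \\ \delta_0/2, & x \in \overline\Omega \setminus V_{\delta_0}. \end{cases}
\]
This $h$ is smooth on $\overline\Omega$, vanishes on $\Gamma$, and satisfies $\partial_\nu h|_\Gamma = \chi'(0)\,\partial_\nu d|_\Gamma = -1$.

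Setting $\psi(t,x) := t + \epsilon\,h(x) \in C^\infty(\overline X)$ for any $\epsilon > 0$, the gradient is $\nabla\psi(t,x) = (1,\,\epsilon\,\nabla h(x))$, which never vanishes because its first component is identically $1$; in particular $\nabla\psi \neq 0$ on $Y$. On $L = (-2,2) \times \Gamma$ one then has
\[
\partial_\nu\psi\bigm|_L = \epsilon\,\partial_\nu h\bigm|_\Gamma = -\epsilon < 0,
\]
hence $\partial_\nu\psi < 0$ a fortiori on $L \setminus \Lambda$. Both requirements of the proposition are met.

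There is no serious obstacle. The only \emph{apparent} difficulty is the well-known tension between $\nabla\psi \neq 0$ everywhere in $Y$ and $\partial_\nu\psi < 0$ on the full spatial boundary, which by a maximum-principle argument on a purely $x$-dependent $\psi$ would force an interior critical point. Introducing the $t$-linear piece resolves this cleanly: the maximum of $\psi$ migrates to the top face $\{t = 2\} \times \overline\Omega$, where $\nabla\psi$ still has nonzero first component, while the normal derivative on $L$ (taken with respect to the spatial outward normal) is untouched. No topological hypothesis on $\Omega$ nor any use of the openness of $\Lambda$ is required.
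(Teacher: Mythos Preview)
Your proof is correct for the proposition as stated, but it takes a genuinely different route from the paper's.

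The paper sketches the classical construction: build $\psi$ near $L\setminus\Lambda$ with the right normal sign, extend to $Y$, perturb to a Morse function, then \emph{push the critical points out of $X$ along paths through $\Lambda$}. This is the standard argument one needs when $\psi$ is meant to be a function of the spatial variable $x$ alone, because for such a $\psi$ the two requirements $\partial_\nu\psi<0$ on all of $\Gamma$ and $\nabla_x\psi\ne 0$ on $\overline\Omega$ are mutually exclusive (any maximum of $\psi$ on $\overline\Omega$ would have to be interior). The open set $\Lambda$ is therefore not decorative in the paper's argument: it is the escape hatch through which critical points are evacuated.

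Your trick exploits the extra variable $t$ in $X=(-2,2)\times\Omega$: adding the linear piece $t$ kills all critical points for free, and the spatial cutoff of the distance delivers $\partial_\nu\psi=-\epsilon$ on the whole of $L$. This is shorter and makes $\Lambda$ irrelevant, exactly as you note. The trade-off is that the resulting $\psi$ genuinely depends on $t$, whereas in the subsequent proofs (Theorems~\ref{th: pb1 carleman approach}--\ref{th: pb3 carleman approach}) the paper writes $\varphi(s,x)=e^{\lambda(-\beta s^2+\psi(x))}$ and repeatedly uses $\sup_\Omega\psi$, $\min_\Omega\psi$, treating $\psi$ as $t$-independent; the claim ``$-\beta s^2+\psi$ has no critical point in $X$'' would fail with your $\psi$. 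So your construction proves the proposition verbatim, but the paper's approach is tailored to how $\psi$ is actually consumed downstream.
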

 
To prove the existence of such functions $\psi$, we first construct $\psi$ in a neighborhood of $L$ 
(resp.  $L\setminus \Lambda$). Next, we extend this function to $Y$ and approximate the extended function  by a Morse function. Finally, we {\em push} the singularities in $Z$ along paths to singularities in a point in $Z$ (resp.   in the exterior of $X$ along paths passing through  $\Lambda$).
We refer for instance \cite[Section 14.2, page 437]{tucsnakbook} for a proof. One can then check that $\varphi=e^{\lambda\psi} $ possesses the assumptions of 
Theorem \ref{th: Carleman global interior Dirichlet} (resp. Theorem \ref{th: Global Carleman estimate boundary Neumann Dirichlet})
 for $\psi$ constructed in Proposition \ref{prop: psi function interior} (resp. Proposition \ref{prop: psi function boundary}).

\subsection{Stabilization by a resolvent estimate}

The resolvent set of an operator $B$ will denoted by $\rho(B)$.

The following abstract theorem is the key tool in establishing the logarithmic stabilization for each of the three systems we are interested in.

\begin{theorem}
	\label{th: resolvent and log decay}
Let $B$ the generator of a continuous semigroup $e^{tB}$ on a Hilbert space $H$. Assume that

\noindent
{\rm (i)} $ \ds \sup_{t\ge0} \| e^{tB}\|_{\mathscr{B}(H)}<\infty$,

\noindent
{\rm (ii)} $i\mathbb{R}\subset \rho(B)$,

\noindent
{\rm (iii)}  $ \| (B-i\mu)^{-1}\|_{\mathscr{B}(H)}\le C e^{K\sqrt{|\mu|}}$, $\mu\in\mathbb{R}$, for some constants $C>0$ and $K>0$.

\noindent
Then  there exists a constant $C_1>0$, such that
 \[ 
 \|e^{tB}f\|_{H}\le \frac{C_1}{\ln ^{2k}(2+t)}\| f\|_{D(B^k)},\;\; f\in D(B^k)
 \]
 or equivalently 
\[
\|e^{tB}B^{-k}\|_{\mathscr{B}(H)}\le \frac{C_1}{\ln ^{2k}(2+t)}  . 
\]
\end{theorem}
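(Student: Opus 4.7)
This is a classical abstract theorem linking Carleman-type resolvent growth on the imaginary axis to logarithmic semigroup decay; such results go back to Lebeau and Burq, and were placed in a general framework by Batty--Duyckaerts. I would organize the argument in three stages: analytic extension of the resolvent to a logarithmic neighborhood of $i\mathbb{R}$, proof of the case $k=1$ via a contour integral, and a clean iteration to lift to $k\ge 2$.

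\emph{Step 1: Analytic extension of the resolvent.} Fix $\mu\in\mathbb{R}$. The Neumann series
$$(\lambda-B)^{-1}=\sum_{n\ge 0}(i\mu-\lambda)^n(i\mu-B)^{-(n+1)}$$
converges whenever $|\lambda-i\mu|\cdot\|(i\mu-B)^{-1}\|_{\mathscr{B}(H)}<1$. Combined with hypothesis (iii), this shows that $\lambda\mapsto(\lambda-B)^{-1}$ admits a holomorphic extension to the logarithmic region
$$\Omega=\{\lambda\in\mathbb{C}:\Re\lambda>-\tfrac{1}{2C}e^{-K\sqrt{|\Im\lambda|}}\},$$
on which the uniform bound $\|(\lambda-B)^{-1}\|_{\mathscr{B}(H)}\le 2C e^{K\sqrt{|\Im\lambda|}}$ holds.

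\emph{Step 2: The case $k=1$.} Since $e^{tB}$ is uniformly bounded by (i), for $f\in D(B^{2})$ we have the inverse-Laplace representation
$$e^{tB}B^{-1}f=\frac{1}{2\pi i}\int_{\gamma}e^{t\lambda}\,\frac{(\lambda-B)^{-1}f}{\lambda}\,d\lambda,$$
taken on a vertical contour $\gamma=\{\Re\lambda=\alpha\}$, $\alpha>0$ small, with a small half-circle detour around the origin. Using Step 1, one deforms $\gamma$ into $\Omega$ along the curve $\Re\lambda=-\delta(|\Im\lambda|)$ with $\delta(\mu)=c\,e^{-K\sqrt{|\mu|}}$. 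Splitting the contour into the portion where $|\Im\lambda|\le R$ and its complement, the first portion is controlled by $C\,e^{-t\delta(R)}e^{K\sqrt{R}}\log R$, while convergence in the complement is furnished by the extra factor $1/\lambda$ (if further decay is needed, an additional integration by parts against $e^{t\lambda}/t$ can be invoked). Optimizing in $R$ at $R\sim(\log(2+t)/K)^{2}$ yields $\|e^{tB}B^{-1}\|_{\mathscr{B}(H)}\le C_1/\log^{2}(2+t)$, and a density argument extends this to arbitrary $f\in D(B)$.

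\emph{Step 3: Iteration from $k=1$ to general $k$.} Since $B$ commutes with $e^{tB}$, for $f\in D(B^{k})$ we write
$$e^{tB}f=\bigl(e^{(t/k)B}B^{-1}\bigr)^{k}B^{k}f,$$
and applying the $k=1$ estimate $k$ times gives
$$\|e^{tB}f\|_{H}\le\left(\frac{C_{1}}{\log^{2}(2+t/k)}\right)^{k}\|B^{k}f\|_{H}\le\frac{C_{k}}{\log^{2k}(2+t)}\,\|f\|_{D(B^{k})}.$$

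\emph{Main obstacle.} The delicate point is Step 2: the Carleman-type growth $e^{K\sqrt{|\mu|}}$ only barely permits deformation into the left half-plane (through a strip whose width shrinks super-polynomially), and the optimization of $R$ in the contour must be carried out carefully so as to balance the contributions from $e^{-t\delta(R)}$ and $e^{K\sqrt{R}}$. If one prefers to avoid the explicit contour calculation, one can alternatively invoke the Batty--Duyckaerts abstract decay theorem with $M(s)=Ce^{K\sqrt{s}}$, which immediately produces the announced logarithmic rate for $D(B)$-data, after which Step 3 applies verbatim.
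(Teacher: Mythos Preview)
The paper does not prove this theorem at all: immediately after the statement it simply writes ``This result is a particular case of \cite[Theorem 1.5]{batty}'' and moves on. So there is no ``paper's own proof'' to compare against beyond the bare citation of Batty--Duyckaerts.

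Your proposal therefore goes well beyond what the paper does. What you have written is essentially a sketch of the argument that underlies the Batty--Duyckaerts theorem (and the earlier work of Lebeau and Burq you mention): Neumann-series extension of the resolvent into a logarithmic strip, a deformed inverse-Laplace contour with optimization of the truncation parameter, and the semigroup-property iteration to pass from $k=1$ to general $k$. This is a legitimate and standard route; your final remark that one may instead invoke \cite{batty} directly with $M(s)=Ce^{K\sqrt{s}}$ is precisely what the paper does. One caution on your Step~2: the tail integral over $|\Im\lambda|>R$ does not converge from the factor $1/\lambda$ alone, since the resolvent bound $e^{K\sqrt{|\mu|}}$ beats any polynomial decay; you correctly flag that an integration by parts against $e^{t\lambda}/t$ (or working with $B^{-2}$ first and interpolating) is needed, but in a full write-up this point should be made explicit rather than parenthetical, as it is where the argument is most delicate.
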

This result is a particular case of \cite[Theorem 1.5]{batty}.
 
\subsubsection{Interior damping}

We deal in this subsection with the system \eqref{schint1}. Specifically we are going to apply Theorem \ref{th: resolvent and log decay} with $B=A_1$ and $H=L^2(\Omega)$. That is we will prove Theorem \ref{maintheorem1} when $\ell=1$. We restate here for convenience this result.

\begin{theorem} 
	\label{th: pb1 carleman approach}
Assume that assumption $(A_c)$ is satisfied. For every $\mu\in\mathbb{R}$, $A_1-i\mu$ is invertible and

\noindent
{\rm (i)} $\|(A_1-i\mu)^{-1}\|_{\mathscr{B}(L^2(\Omega ))}\le C e^{K\sqrt{|\mu|}}$, $\mu\in\mathbb{R}$, for some  constants $C>0$ and $K>0$,

\noindent
{\rm (ii)} there exists a constant $C_1>0$, such that 
\[
\|e^{tA_1}u_0\|_{L^2(\Omega )}\le \frac{C_1}{\ln^{2k}(2+t)}\| u_0\|_{D(A_1^k)},\;\; u_0\in D(A_1^k) .
\]
\end{theorem}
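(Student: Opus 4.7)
The strategy is to establish the resolvent estimate (i) via a space--time Carleman argument and to deduce (ii) as a direct application of the abstract result Theorem \ref{th: resolvent and log decay}. Hypothesis $\sup_{t\ge 0}\|e^{tA_1}\|_{\mathscr{B}(L^2(\Omega))}<\infty$ follows from the monotonicity of $\mathcal{E}^1_{u_0}$ established in \S1.2. To verify $i\mathbb{R}\subset\rho(A_1)$, note that dissipation gives $\Re((A_1-i\mu)u|u)_{0,\Omega}=-\|\sqrt{c}\,u\|^2_{0,\Omega}$, so any $u\in\ker(A_1-i\mu)$ vanishes on $\omega$; unique continuation for $(\Delta_{\mathbf{a}}-\mu)u=0$ (a consequence of the interior Carleman estimate Theorem \ref{t2.1}) then forces $u\equiv 0$, and surjectivity follows from a standard Fredholm argument.

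For (i), treat $\mu\ge 0$ (the case $\mu<0$ is symmetric) and set $\lambda=\sqrt{\mu}$. For $|\mu|$ bounded, continuity of the resolvent on the imaginary axis already yields a uniform bound, so only $\mu\ge 1$ needs attention. Given $u\in D(A_1)$ with $(A_1-i\mu)u=f$, multiplying the equation by $-i$ rewrites it as $\Delta_{\mathbf{a}}u=\mu u-icu-if$. Lift $u$ to the cylinder $X=(-2,2)\times\Omega$ by $v(t,x)=e^{i\lambda t}u(x)$; a direct computation shows
\[
(\partial_t^2+\Delta_{\mathbf{a}})v+icv=-if\,e^{i\lambda t}\quad\text{in }X,
\]
with Dirichlet trace $v|_L=0$ on $L=(-2,2)\times\Gamma$. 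This is a uniformly elliptic equation in $(t,x)$. Pick $Z=(-1,1)\times\omega'$ with $\overline{\omega'}\subset\omega$ and $\varphi=e^{\lambda_0\psi}$, where $\psi$ is provided by Proposition \ref{prop: psi function interior}. Applying Theorem \ref{th: Carleman global interior Dirichlet} (absorbing the zero-order term $ic$ via Remark \ref{rem: construction weight function} and dropping the boundary contributions on $L$, which vanish because $v|_L=0$ entails $\nabla_T v|_L=0$) produces, for every $\tau\ge\tau_0$,
\[
\tau^3\|e^{\tau\varphi}v\|^2_{0,X}+\tau\|e^{\tau\varphi}\nabla v\|^2_{0,X}\le C\bigl(\|e^{\tau\varphi}f\|^2_{0,X}+\|e^{\tau\varphi}v\|^2_{0,Z}+\|e^{\tau\varphi}\nabla v\|^2_{0,Z}\bigr).
\]

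It remains to control the right-hand side. Testing $(A_1-i\mu)u=f$ against $u$ and taking real parts yields the dissipation bound $\|u\|^2_{0,\omega}\le c_0^{-1}\|f\|_{0,\Omega}\|u\|_{0,\Omega}$. Since $|\nabla v|^2=\lambda^2|u|^2+|\nabla u|^2$, a local Caccioppoli argument applied on a slightly enlarged cylinder to the coercive (for $\mu\ge 1$) equation $\Delta_{\mathbf{a}}u-\mu u=-icu-if$ delivers $\|\nabla u\|^2_{0,\omega'}\le C(\|u\|^2_{0,\omega}+\|f\|^2_{0,\Omega})$ \emph{uniformly} in $\mu$, so the only polynomial-in-$\mu$ loss is the factor $\lambda^2=\mu$ multiplying $\|u\|^2_{0,\omega'}$. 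Setting $\tau=K_0\sqrt{\mu}$ for $K_0$ large enough, splitting the resulting cross term $\mu\|f\|_{0,\Omega}\|u\|_{0,\Omega}$ by Young's inequality, and using the strict positivity of $\min_X\varphi$, one absorbs the $\|u\|^2_{0,\Omega}$ contribution into the $\tau^3$ factor on the left; the exponential weight $e^{2\tau(\max_X\varphi-\min_X\varphi)}$ remains and produces precisely the constant $e^{K\sqrt{|\mu|}}$. Combined with $\|v\|^2_{0,X}=4\|u\|^2_{0,\Omega}$ this yields $\|u\|_{0,\Omega}\le Ce^{K\sqrt{|\mu|}}\|f\|_{0,\Omega}$, i.e.\ (i); (ii) follows at once from Theorem \ref{th: resolvent and log decay} applied with $B=A_1$ and $H=L^2(\Omega)$. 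The main obstacle is the $\lambda^2|u|^2$ piece of $|\nabla v|^2$ on the observation side: it forces $\tau$ to grow like $\sqrt{|\mu|}$, and this calibration is precisely what produces the exponential constant $e^{K\sqrt{|\mu|}}$ in the resolvent estimate.
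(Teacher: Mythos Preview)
Your overall architecture (lift to an elliptic problem in $(t,x)$, apply the global Carleman estimate of Theorem~\ref{th: Carleman global interior Dirichlet}, feed the dissipation identity and a local Caccioppoli bound into the right-hand side, then invoke Theorem~\ref{th: resolvent and log decay}) is exactly the paper's scheme. But your case split is inverted, and the hand-wave ``the case $\mu<0$ is symmetric'' hides the only genuinely nontrivial part of the proof.

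Concretely: taking the real part of $(\Delta_{\mathbf a}u+icu-\mu u\,|\,u)_{0,\Omega}$ gives $-\|\nabla_{\mathbf a}u\|_{0,\Omega}^2-\mu\|u\|_{0,\Omega}^2=\Re(g|u)_{0,\Omega}$, so for $\mu\ge 0$ one obtains a \emph{uniform} resolvent bound directly, with no Carleman argument at all; this is what the paper does. The Carleman machinery is only needed for $\mu<0$, and there the situation is \emph{not} symmetric to your $\mu\ge 0$ treatment. First, the lift must be the real exponential $f(s,x)=e^{\alpha s}u(x)$ with $\alpha=\sqrt{-\mu}$ (your complex exponential $e^{i\lambda t}$ cannot kill the $-\mu u$ term when $\mu<0$), and this introduces the factors $e^{2\alpha}$ into every estimate. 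Second, your ``uniform'' Caccioppoli bound fails for $\mu<0$: one only gets $\|\nabla u\|_{0,\omega'}^2\lesssim |\mu|\,\|u\|_{0,\omega}^2+\|g\|_{0,\Omega}^2$. Third, the lifted function does not vanish near $s=\pm 2$, so a cutoff $\chi(s)$ is mandatory; the commutator $[\partial_s^2,\chi]f$ lives on $\{3/4\le |s|\le 1\}$ and carries the full $\|u\|_{0,\Omega}$, and absorbing it into the left-hand side is only possible because the weight is chosen of the form $\varphi(s,x)=e^{\lambda(-\beta s^2+\psi(x))}$ with $\beta$ large, creating the strict ordering $C_1<C_2$ between the weight on the commutator support and on $|s|\le 1/2$. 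None of this mechanism is present in your write-up; in particular your weight $e^{\lambda_0\psi}$ has no $s$-dependence, so you have no lever to separate the commutator region from the interior, and the absorption you describe (``using the strict positivity of $\min_X\varphi$'') cannot be carried out as stated. Finally, even in your $\mu\ge 0$ computation you apply Theorem~\ref{th: Carleman global interior Dirichlet} to $v$ without any cutoff in $t$, leaving the contributions at $t=\pm 2$ unaccounted for.
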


\begin{proof}
Let us first consider the resolvent equation $(A_1-i\mu)u=g$, $g\in L^2(\Omega)$. Changing $g$ by $-ig$, we are lead to solve 
\begin{equation}
	\label{eq: pb1 resolvent}
\Delta_\mathbf{a}u+icu-\mu u=g. 
\end{equation}
Multiplying this equation by $\overline u$ and  integrating on $\Omega$, we have 
\begin{equation*}
(\Delta_\mathbf{a}u| u)_{0,\Omega}+i(cu|u)_{0,\Omega}-\mu(u|u)_0=(g|u)_{0,\Omega},
\end{equation*}
We obtain by applying \eqref{i1}
\begin{equation}
\label{eq: pb1 inner product}
-\|\nabla_\mathbf{a}u\|_{0,\Omega}^2+i(cu|u)_{0,\Omega}-\mu\| u\|_{0,\Omega}^2=(g|u)_{0,\Omega}.
\end{equation}

Taking the real part of this equation, we obtain
\begin{equation*}
-\|\nabla_\mathbf{a}u\|_{0,\Omega}^2-\mu\| u\|_{0,\Omega}^2=\Re( g|u)_{0,\Omega} .
\end{equation*}
If $\mu\ge 0$, this estimate entails
\[
\|\nabla_\mathbf{a}u\|_{0,\Omega}^2\le \| g\|_{0,\Omega}\|u\|_{0,\Omega}
\]
and hence
\[
\| u\|_{0,\Omega}\le \varkappa ^2k^2\| g\|_{0,\Omega},\;\; \mu\ge0.
\]
Here $\varkappa$ is the Poincar\'e constant of $H_0^1(\Omega)$ and $k$ is a constant so that $\|\nabla w\|_{0,\Omega}\le k\|\nabla_\mathbf{a}w\|_{0,\Omega}$, for each $w\in H_0^1(\Omega)$. In other words, we proved the resolvent estimate when $\mu \ge 0$.

Next,  simple computations show that $(iA_1)^\ast= iA_1+2ic$. Whence
\[
\mbox{ind}(iA_1+\mu)=-\mbox{ind}(iA_1+2ic+\mu)=-\mbox{ind}(iA_1+\mu)
\]
and then $\mbox{ind}(iA_1+\mu )=0$. Therefore,  $A_1-i\mu$ is invertible if and only if  it is injective. 

To prove that $A_1-i\mu$ is injective, take, for $g=0$, the imaginary part of equation~\eqref{eq: pb1 inner product} in order to obtain that $u=0$ in $\omega$. Hence 
$\Delta_\mathbf{a}u+icu-\mu u=0$ in $\Omega$ and $u=0$ in $\omega$. Then, by the unique continuation property, $u=0$ in $\Omega$. 

We complete the proof by establishing the resolvent estimate when $\mu<0$. By continuity argument, we are reduced to prove the resolvent estimate for large $|\mu|$. To do that, we obtain, by taking again the imaginary part of 
equation~\eqref{eq: pb1 inner product}, 
\begin{equation}
	\label{eq: pb1 estimate on omega}
c_0 \|u\|_{0,\omega}^2\le \| u\|_{0,\Omega} \|g\|_{0,\Omega}.
\end{equation}

Now are now going to apply a Carleman inequality to estimate $\|u\|_{0,\Omega}$ in terms of $\|u\|_{0,\omega}$. To this end, define $f(s,x)=e^{\alpha s}u(x)$, where $\alpha=\sqrt{-\mu }$. Since $u$ is the solution of \eqref{eq: pb1 resolvent}, we easily get that  $f$ satisfies
\begin{equation}
	\label{eq: pb1 equation adding variable}
\partial_s^2 f+\Delta_\mathbf{a} f+ icf= e^{s\alpha }g.
\end{equation}
Fix  $\omega' \Subset\omega$ and set 
\[
X_1=(-1,1)\times \Omega \;  \mbox{and}\;\; X_2= (-1/2,1/2)\times \omega'.
\]
Pick $\chi\in\Con_0^\infty(\mathbb{R})$, such that $\chi(s)=1$ for $|s|\le 3/4$ and $\chi(s)=0 $ for $ |s|\ge1$. We put
\[
\varphi(s,x)=e^{\lambda(-\beta s^2+\psi(x))},
\] 
where $\psi$ satisfies Proposition~\ref{prop: psi function interior} with $Z=X_2$ and $\beta>0$ is fixed in what follows.
 The critical points of $-\beta s^2+\psi(x)$ are located in
$X_2$. Then for $\lambda $ sufficiently large (but fixed from now on) $\varphi$ satisfies the sub-ellipticity condition according 
to Remark~\ref{rem: construction sub-ellipticity function}. We can apply Theorem~\ref{th: Carleman global interior Dirichlet}, 
with  $\chi f$ instead of $f$. We obtain as $\chi f$ satisfies the Dirichlet boundary condition
\begin{align}
	\label{eq: pb 1 carleman consequence}
\tau^3\| e^{\tau\varphi} \chi f \| ^2_{0,X}
+\tau\| e^{\tau\varphi} \nabla ( \chi f )\| ^2_{0,X}
&\lesssim \| e^{\tau\varphi}  \big(  \partial_s^2(\chi f)+\Delta_\mathbf{a}(\chi f)+ ic\chi f  \big) \|^2_{0,X}
 \\
&\hskip 2.5cm + \| e^{\tau\varphi} f \| ^2_{0,X_2}
+\| e^{\tau\varphi} \nabla f \| ^2_{0,X_2}.\notag
\end{align}
Here and until the end of this proof, $Q_1\lesssim Q_2$ means that $Q_1\le CQ_2$, for some generic constant $C$, only depending on $\Omega$, $\psi$, $\mathbf{a}$ and $c$.

We have 
\[ 
\partial_s^2(\chi f)+\Delta_\mathbf{a}(\chi f)+ ia\chi f  = e^{s\alpha } \chi g+ 2 \partial_s\chi  \partial_sf+f \partial_s^2\chi . 
\]
As $\partial_s \chi$ is supported in the set  $\{ s\in\mathbb{R}, 3/4\le |s|\le 1\}$, we get
\begin{equation}\label{E1}
\|  e^{\tau\varphi} (2 \partial_s\chi  \partial_sf+f \partial_s^2\chi)\|^2_{0,X}
\lesssim \alpha e^{C_1\tau+2\alpha  }\| u\|^2_{0,\Omega},
\end{equation}
with $C_1=e^{\lambda ( -9\beta/16+\max_\Omega\psi)} $.

On the other hand 
\begin{align}
&\| e^{\tau\varphi} f \| ^2_{0,X_2}+\| e^{\tau\varphi} \nabla f \| ^2_{0,X_2}\lesssim \alpha
 e^{\tau C_3 +2\alpha }\| u\|^2_{1,\omega'}, \label{E2}
 \\
 &\| e^{\tau\varphi}e^{s\alpha } \chi g \|_{0,X}^2\lesssim     e^{\tau C_3 +2\alpha }   \| g\|^2_{0,X}.\label{E3}
 \end{align}
where $C_3=2 e^{\lambda \max_\Omega\psi}$.
 
Inequalities \eqref{E1}, \eqref{E2} and \eqref{E3} in \eqref{eq: pb 1 carleman consequence} yield
\begin{equation}\label{eq: pb 1 carleman consequence 2}
\tau^3\| e^{\tau\varphi} \chi f \| ^2_{0,X}
+\tau\| e^{\tau\varphi} \nabla ( \chi f )\| ^2_{0,X} 
\lesssim    \alpha
 e^{\tau C_3 +2\alpha }\left(  \| u\|^2_{1,\omega'} +\| g\|^2_{0,\Omega}\right)   +  \alpha e^{C_1\tau+2\alpha  }\| u\|^2 _{0,\Omega} .
\end{equation}

Let  $\chi_0^2\in\Con_0^\infty(\omega) $ where $\chi_0=1$ on $\omega'$. We multiply~\eqref{eq: pb1 resolvent} by 
$\chi_0^2\overline u$ and we make an integration by parts. We  obtain
\[
\| \nabla u\|_{0,\omega'}^2\lesssim    \alpha \|u\|_{0,\omega}^2+ \| g\|_{0,\Omega}^2
\]
for which we deduce
\begin{equation}\label{eq: pb 1 carleman consequence 3}
\tau^3\| e^{\tau\varphi} \chi f \| ^2_{0,X}
+\tau\| e^{\tau\varphi} \nabla ( \chi f )\| ^2_{0,X} 
\lesssim    \alpha^2
 e^{\tau C_3 +2\alpha }\left(  \| u\|^2_{0,\omega} +\| g\|^2_{0,\Omega}\right) 
   +  \alpha e^{C_1\tau+2\alpha  }\| u\|^2 _{0,\Omega}.
\end{equation}

In the set $X\cap \{(s,x);\; |s|\le 1/2\}$, $\chi=1$ and 
\[
\varphi\ge e^{\lambda (-\beta/4+\min_{\Omega}\psi)}.
\] 
Then $e^{2\tau \varphi}\ge e^{\tau C_2}$, where
\[ 
C_2=2 e^{\lambda ( -\beta/4+\min_{\Omega}\psi ) }.
\] 
Fix then $\beta$ sufficiently large in such a way that $C_1<C_2<C_3$. From~\eqref{eq: pb 1 carleman consequence 3} we thus obtain
\begin{align*}
e^{\tau C_2+\alpha }\| u \| ^2_{0,\Omega}
\lesssim    \alpha^2
 e^{\tau C_3 +2\alpha }\left(  \| u\|^2_{0,\omega} +\| g\|^2_{0,\Omega}\right) 
 +  \alpha e^{C_1\tau+2\alpha  }\| u\|^2 _{0,\Omega}.
\end{align*}
Taking $\tau =\gamma \alpha=\gamma \sqrt{|\mu|}$ with $\gamma $ sufficiently large, there exist $C_4, C_5>0$ such that
\begin{align*}
\| u \| ^2_{0,\Omega}
\lesssim    
 e^{ C_4 \alpha }(  \| u\|^2_{0,\omega} +\| g\|^2_{0,\Omega}) 
 +   e^{-C_5 \alpha  }\| u\|^2_{0,\Omega}  .
\end{align*}
For $\alpha $ sufficiently large, we have 
\begin{align*}
\| u \| ^2_{0,\Omega}
\lesssim    
 e^{ C_4 \alpha }(  \| u\|^2_{0,\omega} +\| g\|^2_{0,\Omega}) .
 \end{align*}
 From~\eqref{eq: pb1 estimate on omega} we have
 \begin{align*}
\| u \| ^2_{0,\Omega} 
\le
K e^{ C_4 \alpha }\left(  \| u\|_{0,\Omega} \| g\|_{0,\Omega} +\| g\|^2_{0,\Omega}\right) .
 \end{align*}
As  
\[ Ke^{ C_4 \alpha } \| u\|_{0,\Omega} \| g\|_{0,\Omega}\le 
\| u\|_{0,\Omega} ^2/2+ (K^2/2)e^{ 2C_4 \alpha }\| g\|_{0,\Omega}^2,
\]
we obtain
 \begin{align*}
\| u \| ^2_{0,\Omega} 
\lesssim    
 e^{2 C_4 \alpha } \| g\|^2_{0,\Omega} ,
 \end{align*}
 which is exactly the expected resolvent estimate.
 \end{proof}
 
\subsubsection{Boundary damping}

This subsection is devoted to the proof of Theorem \ref{maintheorem1} when $\ell=2$ and $\ell=3$. We first restate for conveniance the result for $\ell=2$.

\begin{theorem} \label{th: pb2 carleman approach}
Let assumption $(A_d)$ holds. For every $\mu\in\mathbb{R}$, $A_2-i\mu$ is invertible and

\noindent
{\rm (i)} $\| (A_2-i\mu)^{-1}\|_{\mathscr{B}(V)}\le C e^{K\sqrt{|\mu|}}$, $\mu\in\mathbb{R}$, for  some constants $C>0$ and $K>0$,

\noindent
{\rm (ii)}  there exists a constant $C_1>0$, such that 
\[
\|e^{tA_2}u_0\|_{V}\le  \frac{C_1}{\ln^{2k}(2+t)}\| u_0\|_{D(A_2^k)},\;\; u_0\in D(A_2^k) .
\]
\end{theorem}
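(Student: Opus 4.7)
The plan is to follow the same template as Theorem~\ref{th: pb1 carleman approach}. Statement (ii) is a direct consequence of (i) and Theorem~\ref{th: resolvent and log decay}, since $e^{tA_2}$ is contractive on $V$ thanks to \eqref{disp4}; so I concentrate on (i).

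Given $g\in V$ and $\mu\in\mathbb{R}$, the resolvent equation $(A_2-i\mu)u=g$ unfolds, by substituting the equation into the defining boundary condition of $D(A_2)$, into
\[
\Delta_\mathbf{a} u-\mu u = -ig \text{ in }\Omega,\qquad u_{|\Gamma_1}=0,\qquad \partial_{\nu_\mathbf{a}}u=-id\mu u-dg\text{ on }\Gamma_0.
\]
Multiplying by $\bar u$, integrating and invoking \eqref{i1} yields the identity
\[
-\|\nabla_\mathbf{a}u\|_{0,\Omega}^2-i\mu\|\sqrt d\, u\|_{0,\Gamma_0}^2-(dg|u)_{0,\Gamma_0}-\mu\|u\|_{0,\Omega}^2=-i(g|u)_{0,\Omega}.
\]
Taking real and imaginary parts separately produces, using Poincar\'e and the equivalence between $\|\nabla_\mathbf{a}\cdot\|$ and $\|\nabla\cdot\|$ on $V$, the bound $\|u\|_V\lesssim \|g\|_V$ for $\mu\ge 0$ and, for every $\mu\in\mathbb{R}$, the boundary observation $|\mu|\|\sqrt d u\|_{0,\Gamma_0}^2\lesssim \|g\|_V\|u\|_V$. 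A short computation of $A_2^\ast$ (already carried out in Section~1) shows $A_2-i\mu$ is Fredholm of index zero, so invertibility reduces to injectivity. Uniqueness when $g=0$ follows from the observation: it forces $u=0$ on $\gamma_0$, whence $\partial_{\nu_\mathbf{a}}u=0$ on $\gamma_0$ via the boundary condition, and Aronszajn--H\"ormander unique continuation applied to $\Delta_\mathbf{a}u-\mu u=0$ then yields $u\equiv 0$.

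For the quantitative estimate when $\mu<0$, by continuity we may take $|\mu|$ large. Set $\alpha=\sqrt{|\mu|}$ and $f(s,x)=e^{\alpha s}u(x)$; then $\partial_s^2 f+\Delta_\mathbf{a}f=-ie^{\alpha s}g$ in $X$ with $f=0$ on $(-2,2)\times\Gamma_1$. Pick $\omega_0\Subset\gamma_0$, let $\Lambda\subset L_0$ be a relatively open neighborhood of $[-1/2,1/2]\times\overline{\omega_0}$, and choose the weight $\varphi=e^{\lambda\psi}$ given by Proposition~\ref{prop: psi function boundary} together with the standard time-cutoff $\chi(s)$ used in Theorem~\ref{th: pb1 carleman approach}. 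Applying Theorem~\ref{th: Global Carleman estimate boundary Neumann Dirichlet} to $\chi f$, the boundary terms on $L_1$ vanish since $\chi f\equiv 0$ there; the Neumann contribution on $L_0\setminus\Lambda$ is dominated via $|\partial_{\nu_\mathbf{a}}u|\le |\mu|d|u|+d|g|$ and absorbed thanks to the strict inequality between the weight on $L_0\setminus\Lambda$ and in the interior; on $\Lambda$ the Dirichlet and Neumann traces are controlled by $\|u\|_{0,\gamma_0}^2+\|\partial_{\nu_\mathbf{a}}u\|_{0,\gamma_0}^2$, the second term bounded through the boundary condition by $|\mu|^2\|u\|_{0,\gamma_0}^2+\|g\|_V^2$. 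Combining with the energy observation $\|u\|_{0,\gamma_0}^2\lesssim |\mu|^{-1}\|g\|_V\|u\|_V$ and running the absorption argument of Theorem~\ref{th: pb1 carleman approach} (fix $\beta$ large so the inner weight beats the cutoff commutator, then take $\tau=\gamma\alpha$ with $\gamma$ large) will yield $\|u\|_V\lesssim e^{K\sqrt{|\mu|}}\|g\|_V$.

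The main obstacle will be the boundary Carleman step: unlike the interior-damping case, here the Neumann trace $\partial_{\nu_\mathbf{a}}u$ on $\Gamma_0$ carries a factor of $|\mu|$ coming from the Robin-type boundary condition. Absorbing this polynomial factor into the exponential $e^{K\sqrt{|\mu|}}$ demands careful bookkeeping of the contributions on $L_0\setminus\Lambda$ (where we must control them) versus on $\Lambda$ (where the observation lives), together with a Caccioppoli-type estimate near $\gamma_0$ upgrading the $L^2$-observation of $u$ into control of the full Cauchy pair $(u,\partial_{\nu_\mathbf{a}}u)$. It is also at this step that the $V$-norm, rather than $L^2$, makes a qualitative difference: one must ultimately control $\|\nabla_\mathbf{a}u\|_{0,\Omega}$, which the Carleman inequality supplies directly through its gradient term on the left-hand side.
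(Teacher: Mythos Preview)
Your outline follows the paper's proof closely, but two of the absorption mechanisms you describe are not the ones that actually carry the argument.

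First, the weight must be $\varphi(s,x)=e^{\lambda(-\beta s^2+\psi(x))}$, not just $e^{\lambda\psi}$. You later invoke $\beta$, so you presumably intend this; but without the quadratic-in-$s$ decay there is no hierarchy $C_1<C_2$ making the commutator from $\chi$ (supported where $3/4\le|s|\le1$) absorbable into the left-hand side.

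Second, and more substantively, there is no ``strict inequality between the weight on $L_0\setminus\Lambda$ and in the interior'' available for absorption: the Carleman estimate of Theorem~\ref{th: Global Carleman estimate boundary Neumann Dirichlet} carries the \emph{same} weight $e^{\tau\varphi}$ on the boundary term $\tau\|e^{\tau\varphi}\partial_\nu f\|_{0,L_0\setminus\Lambda}^2$ as on the left-hand side, and $\partial_\nu\psi<0$ is only a local sign condition. The paper does not split $L_0$ at all. The imaginary-part identity already gives $|\mu|\,\|\sqrt d\,u\|_{0,\Gamma_0}^2\lesssim\|g\|_V\|u\|_V$ on the \emph{whole} of $\Gamma_0$, and since $\partial_{\nu_\mathbf{a}}u=-id(\mu u+g)$ on $\Gamma_0$ one obtains directly
\[
\|e^{\tau\varphi}\partial_{\nu_\mathbf{a}} f\|_{0,L_0}^2
\ \lesssim\ e^{2\alpha+C_3\tau}\bigl(\|\nabla_\mathbf{a}g\|_{0,\Omega}^2+\alpha^4\,\|\nabla_\mathbf{a}g\|_{0,\Omega}\|\nabla_\mathbf{a}u\|_{0,\Omega}\bigr),
\]
a cross term whose polynomial prefactor $\alpha^4$ is harmlessly absorbed into the exponential at the final Young-inequality step. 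For the same reason, no Caccioppoli-type estimate is needed: the Robin condition expresses the Neumann trace \emph{algebraically} in terms of $u_{|\Gamma_0}$ and $g_{|\Gamma_0}$, so once $\|\sqrt d\,u\|_{0,\Gamma_0}$ is controlled the full Cauchy pair is controlled. With these two corrections your sketch coincides with the paper's proof.
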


\begin{proof}
We are going to prove that $B=A_2$ obeys to the conditions of Theorem~\ref{th: resolvent and log decay} when $H=V$. As in the preceding proof, we solve the resolvent equation: for $g\in V$, find $u\in D(A_2)$ satisfying
\[
(A_2-i\mu)u=g.
\] 
Substituting $g$ by $-ig$, we are reduced the following equation: find $u\in D(A_2)$ so that
\begin{equation} \label{eq: pb2 resolvent}
\Delta_\mathbf{a}u-\mu u=g.
\end{equation}
Multiply this equation by $\overline u$ and  integrate over $\Omega$ in order to get 
\begin{equation*}
(\Delta_\mathbf{a}u| u)_{0,\Omega}-\mu(u|u)_{0,\Omega} =(g|u)_{0,\Omega},
\end{equation*}
In combination with \eqref{i1}, this identity yields
\begin{equation*}
-\|\nabla_\mathbf{a}u\|_{0,\Omega}^2-\mu\| u\|_{0,\Omega}^2+(\partial_{\nu_A}u|u)_{0,\Gamma_0}=(g|u)_{0,\Omega}.
\end{equation*}
As $\partial_{\nu_A} u=-id\Delta_\mathbf{a} u$ and $\Delta_\mathbf{a} u=\mu u +g$ on $\Gamma_0$, we have
\begin{equation}
\label{eq: pb2 inner product}
-\|\nabla_\mathbf{a}u\|_{0,\Omega}^2-\mu\| u\|_{0,\Omega}^2-i\mu \|\sqrt{d}u\|_{0,\Gamma_0} =(g|u)_{0,\Omega}+i(dg|u)_{0,\Gamma_0}.
\end{equation}
Taking the real part, we get
\begin{equation} \label{eq: pb 2 equation integration by parts}
-\|\nabla_\mathbf{a}u\|_{0,\Omega}^2-\mu\| u\|_{0,\Omega}^2=\Re( g|u)_{0,\Omega}  +\Re( idg|u) _{0,\Gamma_0}.
\end{equation}
For $\mu\ge 0$, we have
\[
\|\nabla_\mathbf{a}u \|_{0,\Omega}^2+\mu\| u\|_{0,\Omega}^2\le \|g\|_{0,\Omega}\|u\|_{0,\Omega}+\|d\|_\infty\|g\|_{0,\Gamma_0 }\|u\|_{0,\Gamma_0 }.
\]

We know that $\| \nabla_\mathbf{a} \cdot \|_{0,\Omega}$ is equivalent to the natural norm of $V$ induced by that on $H_0^1(\Omega)$. Therefore, the trace operator $\mathbf{tr}: V\rightarrow L^2(\Gamma _0 )$ is bounded when $V$ is endowed with norm $\|\nabla_\mathbf{a} \cdot \|_{0,\Omega}$. 

Thus, we have
\begin{align*}
\|\nabla_\mathbf{a}u\|_{0,\Omega}^2+\mu\| u\|_0^2&\le \varkappa_1^2 \|\nabla_\mathbf{a}g|\|_{0,\Omega}\|\nabla_\mathbf{a}u\|_{0,\Omega}+\|d\|_\infty\|\mathbf{tr}\|\||\nabla_\mathbf{a}g|\|_{0,\Omega}\||\nabla_\mathbf{a}u|\|_{0,\Omega}
\\
&\le\left( \varkappa_1^2 +\|d\|_\infty\|\mathbf{tr}\|^2\right)\|\nabla_\mathbf{a}g\|_{0,\Omega}\|\nabla_\mathbf{a}u\|_{0,\Omega},
\end{align*}
where $\|\mathbf{tr}\|$ denotes the norm of $\mathbf{tr}$ in $\mathscr{B}(V,L^2(\Gamma _0))$ and $\varkappa_1$ is the Poincar\'e constant of $V$.
In particular
\begin{equation} \label{eq: estimate for mu positive pb2}
\|\nabla_\mathbf{a}u\|_{0,\Omega}\le \left( \varkappa_1^2 +\|d\|_\infty\|\mathbf{tr}\|^2\right)\|\nabla_\mathbf{a}g\|_{0,\Omega}.
\end{equation}
This is nothing but the resolvent estimate for $\mu \ge 0$.

Let us now consider the case $\mu <0$. To this end, we firstly observe that $A_2-i\mu$  is injective. Indeed, take $g=0$ and then the imaginary part in \eqref{eq: pb2 inner product} to get $u=0$ on $\gamma_0$ yielding $\partial_\nu u=0$ on $\gamma_0$. Whence $u=0$ by the unique continuation property. Obviously, $g=0$ and $\mu =0$ entail $\nabla_\mathbf{a}u=0$ and then $u=0$. Next, as $A_2$ is invertible by the preceding step and $D(A_2)$ is compactly embedded in $V$ according to the elliptic regularity, $A_2^{-1}:V\rightarrow V$ is compact. Therefore $B_2= I-i\mu A_2^{-1}$, injective, is onto by Fredholm's alternative and hence $A_2-i\mu = A_2B_2$ is also onto.

To complete the proof, it remains to prove the resolvent estimate for $\mu <0$. As in the preceding proof it is enough to establish such an estimate for $|\mu|$ large.  To this end, taking one more time the imaginary part of~\eqref{eq: pb2 inner product}, we obtain
\begin{equation}\label{eq: pb2 Imaginary part formula}
-\mu \|\sqrt{d}u\|_{0,\Gamma_0}^2 =-\Re( ig|u) _{0,\Omega}+\Re(dg|u)_{0,\Gamma_0}.
\end{equation}

From~\eqref{eq: pb2 Imaginary part formula}, we get by using the continuity of the trace operator $\mathbf{tr}$ and $|\mu|\ge 1$,
\begin{equation} \label{est: a priori on u by damping}
d_0 \| u \|_{0,\gamma_0}^2\le \|d\|_\infty\|\mathbf{tr}\|^2\|\nabla_\mathbf{a}g\|_{0,\Omega}\|\nabla_\mathbf{a}u\|_{0,\Omega}.
\end{equation}

Next we proceed as in the preceding theorem. We first use a Carleman inequality to estimate $\|\nabla_\mathbf{a}u\|_{0,\Omega}$ by $\|u\|_{0,\Gamma_0}$.  Set $f(s,x)=e^{\alpha s}u(x)$, where $s\in (-2,2)$ and  $\alpha=\sqrt{-\mu }$. Then it is straightforward to check that $f$ satisfies
\begin{equation}
	\label{eq: pb2 equation adding variable}
Pf=\partial_s^2 f+\Delta_\mathbf{a} f= e^{s\alpha }g.
\end{equation}
Recall  that
\[
X=(-2,2)\times \Omega ,\;\; X_1=(-1,1)\times \Omega 
\]
and define
\[
\ell _0= (-1/2,1/2)\times \gamma_0.
\]

Let $\chi\in\Con_0^\infty(\mathbb{R})$, such that $\chi(s)=1$ for $|s|\le 3/4$ and $\chi(s)=0 $ for $ |s|\ge1$. We set 
\[
\varphi(s,x)=e^{\lambda(-\beta s^2+\psi(x))},
\]
 where $\psi$ is given by Proposition~\ref{prop: psi function boundary} with $\Lambda =\ell_0$ and $\beta>0$ is fixed in what follows.
 The function $-\beta s^2+\psi(x)$ has no critical point in $X$.
Then for $\lambda $ sufficiently large (but fixed from now on) $\varphi $ satisfies the sub-ellipticity condition of
Remark~\ref{rem: construction sub-ellipticity function}. 

In the rest of this proof $Q_1\lesssim Q_2$ means $Q_1\le CQ_2$, for some generic constant $C$, only depending on $n$, $\Omega$, $\mathbf{a}$ and $d$.

We can apply 
Theorem~\ref{th: Global Carleman estimate boundary Neumann Dirichlet}, 
with  $\chi f$ instead of $f$. As $\chi f$ satisfies Dirichlet boundary condition on 
$\Gamma_1$ and as $\partial_{\nu_\mathbf{a}}=\partial_\nu+i\mathbf{a}\cdot \nu$, we get
$\tau^3\| e^{\tau\varphi}f\|^2_{0,\ell _0}
+\tau\| e^{\tau\varphi}\partial_{\nu} f\|^2_{0,\ell_0} $ is equivalent to $\tau^3\| e^{\tau\varphi}f\|^2_{0,\ell _0}
+\tau\| e^{\tau\varphi}\partial_{\nu_\mathbf{a}} f\|^2_{0,\ell_0} $. Then

\begin{align}
\label{eq: carleman estimate type 1}
\tau^3\| e^{\tau\varphi}  \chi f \|_{0,X} ^2
&+\tau\| e^{\tau\varphi} \nabla (\chi f) \|_{0,X} ^2
  \lesssim \| e^{\tau\varphi}  P(\chi f) \|_{0,X}^2 
\\
&+\tau\| e^{\tau\varphi}\partial_{\nu_\mathbf{a}} f\|^2_{0,L_0 \setminus \ell_0}+\tau^3\| e^{\tau\varphi}f\|^2_{0,\ell _0}
+\tau\| e^{\tau\varphi}\partial_{\nu_\mathbf{a}} f\|^2_{0,\ell_0} . \notag
\end{align}

We recall the constants defined in the previous section,
\begin{align}
	\label{def: constants}
&C_1=  2e^{\lambda(  -9\beta/16+\sup_{\Omega} \psi)} , 
\\
& C_2=2e^{\lambda ( -\beta/4+\min_{\Omega}\psi ) },  \notag
\\
& C_3=2 e^{\lambda \sup_{\Omega}\psi(x)} .  \notag
\end{align}
Similarly to the previous section, we have 
\begin{align}  
	\label{est: right hand side Carleman type 1-1}
&  \| e^{\tau\varphi} P   (\chi f) \|_{0,X}^2   \lesssim  e^{2\alpha +C_3\tau } \|\nabla_\mathbf{a}g\|_{0,\Omega}^2 +  e^{2\alpha +C_1\tau } \|\nabla_\mathbf{a}u\|_{0,\Omega}^2,
\\
&  \| e^{\tau\varphi}f|^2_{0,\ell_0}  \lesssim  e^{2\alpha +C_3\tau } 
\| u \|_{0,\gamma_0}^2 \lesssim  e^{2\alpha +C_3\tau }  \|\nabla_\mathbf{a}g\|_{0,\Omega}\|\nabla_\mathbf{a}u\|_{0,\Omega},  \notag
\end{align}
from \eqref{est: a priori on u by damping}.
The two  other terms of the right hand side of \eqref{eq: carleman estimate type 1}  may be estimated by 
$\| e^{\tau\varphi}\partial_{\nu_\mathbf{a}} f\|^2_{0,L_0}$.
We have
\begin{equation*}
(\partial_{\nu_A} f )_{|L_0}=e^{\alpha s}(\partial_{\nu_A} u )_{|\Gamma_0}=-ide^{\alpha s}(\Delta_\mathbf{a} u)_{|\Gamma_0}
=-ide^{\alpha s}( g+\mu u)_{|\Gamma_0}.
\end{equation*}
Whence
\begin{align}
\label{est: right hand side Carleman type 1-2}
\| e^{\tau\varphi}\partial_{\nu_\mathbf{a}} f\|^2_{0,L_0}
&\lesssim   e^{2\alpha +C_3\tau }( \|g\|^2 _{0,\Gamma_0}+|\mu|^2\|d^{1/2}u\|^2_{0,\Gamma_0}) 
\\
&\lesssim   e^{2\alpha +C_3\tau }\left( \|\nabla_\mathbf{a}g\|_{0,\Omega}^2+\alpha^4    \|\nabla_\mathbf{a}g\|_{0,\Omega}\|\nabla_\mathbf{a}u\|_{0,\Omega}\right) .   \notag
\end{align}

On the other hand, it is straightforward to check 
\begin{align}
	\label{est: left hand side carleman type 1}
\tau^3\| e^{\tau\varphi}  \chi f \|_{0,X} ^2
+\tau\| e^{\tau\varphi} \nabla (\chi f) \|_{0,X} ^2& \gtrsim \tau^3\| e^{\tau\varphi}   f \|_{0,(-1/2,1/2)\times \Omega} ^2
\\
&\hskip 2cm +\tau\| e^{\tau\varphi} \nabla f \|_{0,(-1/2,1/2)\times \Omega} ^2\notag
\\
&\gtrsim 
 e^{\alpha +\tau C_2}( \|  u \|_{0,\Omega} ^2
+\|  \nabla u \|_{0,\Omega} ^2).  \notag
\end{align}
Inequalities \eqref{eq: carleman estimate type 1} and \eqref{est: right hand side Carleman type 1-1} to \eqref{est: left hand side carleman type 1} yield
\begin{align*}
 &e^{\alpha +\tau C_2} \| \nabla_\mathbf{a}u\|^2_{0,\Omega}
 \\
 &\qquad \lesssim  e^{2\alpha +C_3\tau }(   \| \nabla_\mathbf{a}g\|_{0,\Omega}^2 + \alpha^4 \| \nabla_\mathbf{a}g\|_{0,\Omega}  \| \nabla_\mathbf{a}u\|_{0,\Omega})
 + \alpha e^{2\alpha +C_1\tau } \| \nabla_\mathbf{a}u\| _{0,\Omega}^2.
\end{align*}
As we have done in the preceding proof, taking $\beta$ sufficiently large, we have $C_1<C_2<C_3$  and,  for $\tau=\gamma \alpha$ with $\gamma $ sufficiently 
large, we find $C_4>0$  and $C_5>0$ so that
\begin{align*}
 \|\nabla_\mathbf{a}u\|_{0,\Omega}^2\le  C e^{C_4\alpha}\left( \|\nabla_\mathbf{a}g\|_{0,\Omega}^2 + \alpha^4 \|\nabla_\mathbf{a}g\|_{0,\Omega}\|\nabla_\mathbf{a}u\|_{0,\Omega}\right)
 + C e^{-C_5\alpha } \|\nabla_\mathbf{a}u\|_{0,\Omega}^2.
\end{align*}
Choose $\alpha$ sufficiently large in such a way that $C  e^{-C_5\alpha } \le 1/4$. Then 
\begin{equation*}
 C\alpha^4 e^{C_4\alpha} \|\nabla_\mathbf{a}g\|_{0,\Omega}  \|\nabla_\mathbf{a}u\|_{0,\Omega} 
\le C^2 \alpha^8 e^{2C_4\alpha} \|\nabla_\mathbf{a}g\|_{0,\Omega}^2+  \|\nabla_\mathbf{a}u\|_{0,\Omega}^2/2.
\end{equation*}
The last two estimates entail
\begin{equation}
 \|\nabla_\mathbf{a}u\|_{0,\Omega}^2\le  C e^{C \alpha}  \|\nabla_\mathbf{a}g\|_{0,\Omega}^2 .
\end{equation}
The proof is then complete.
\end{proof}


We move now to the system \eqref{schfront1b} for which we aim to prove Theorem \ref{maintheorem1} for $\ell=3$. We restate here for convenience this result.

\begin{theorem} \label{th: pb3 carleman approach}
Under assumption $(A_d)$, for every $\mu\in\mathbb{R}$, $A_3-i\mu$ is invertible and

\noindent
{\rm (i)} $ \| (A_3-i\mu)^{-1}\|_{\mathscr{B}(L^2(\Omega))}\le C e^{K\sqrt{|\mu|}}$,
$\mu\in\mathbb{R}$, for some constants $C>0$ and $K$,

\noindent
{\rm (ii)} there exists a constant $C_1>0$, such that 
\[
 \|  e^{tA_3}u_0\|_{0,\Omega}\le 
\frac{C_1}{\ln^{2k}(2+t)}\| u_0\|_{D(A_3^k)},\;\; u_0\in D(A_3^k) .
\]
\end{theorem}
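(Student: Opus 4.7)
The plan is to verify the hypotheses of Theorem~\ref{th: resolvent and log decay} with $B=A_3$ and $H=L^2(\Omega)$, following the scheme already used for $A_1$ and $A_2$ but adapted to the Robin-type boundary condition $\partial_{\nu_\mathbf{a}}u - idu=0$ on $\Gamma_0$. As before, after replacing $g$ by $-ig$, the resolvent equation $(A_3-i\mu)u=g$ becomes
\[
\Delta_\mathbf{a}u-\mu u=g\quad \text{in}\;\Omega,\qquad u=0\;\text{on}\;\Gamma_1,\qquad \partial_{\nu_\mathbf{a}}u=idu\;\text{on}\;\Gamma_0.
\]
Multiplying by $\overline{u}$, integrating and applying \eqref{i1}, the boundary condition gives directly
\begin{equation}\label{eq: pb3 ip}
-\|\nabla_\mathbf{a}u\|_{0,\Omega}^2-\mu\|u\|_{0,\Omega}^2+i\|\sqrt{d}u\|_{0,\Gamma_0}^2=(g|u)_{0,\Omega}.
\end{equation}
Taking the real part shows that for $\mu\ge 0$, $\|\nabla_\mathbf{a}u\|_{0,\Omega}^2+\mu\|u\|_{0,\Omega}^2\le \|g\|_{0,\Omega}\|u\|_{0,\Omega}$, and Poincar\'e's inequality on $V$ (combined with the equivalence of $\|\nabla_\mathbf{a}\cdot\|_{0,\Omega}$ and $\|\nabla\cdot\|_{0,\Omega}$ on $V$) yields the resolvent estimate $\|u\|_{0,\Omega}\lesssim \|g\|_{0,\Omega}$.

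For injectivity on the imaginary axis, setting $g=0$ and taking the imaginary part of \eqref{eq: pb3 ip} gives $u=0$ on $\gamma_0$, and then the boundary condition $\partial_{\nu_\mathbf{a}}u=idu$ forces $\partial_\nu u=0$ on $\gamma_0$. The unique continuation property for the magnetic Schr\"odinger operator then implies $u\equiv 0$. Since $A_3$ has compact resolvent (by elliptic regularity, $D(A_3)$ embeds compactly in $L^2(\Omega)$), Fredholm's alternative combined with the case $\mu=0$ (invertibility directly from \eqref{eq: pb3 ip}) gives that $A_3-i\mu$ is invertible for every $\mu\in\mathbb{R}$.

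It remains to establish the resolvent estimate for $\mu<0$, and by continuity it suffices to treat $|\mu|$ large. The imaginary part of \eqref{eq: pb3 ip} gives
\begin{equation}\label{eq: pb3 boundary control}
d_0\|u\|_{0,\gamma_0}^2\le \|\sqrt{d}u\|_{0,\Gamma_0}^2\le \|g\|_{0,\Omega}\|u\|_{0,\Omega},
\end{equation}
so $\|u\|_{0,\gamma_0}$ is controlled by $\|g\|_{0,\Omega}$ and $\|u\|_{0,\Omega}$. As in the proofs of Theorems~\ref{th: pb1 carleman approach} and \ref{th: pb2 carleman approach}, set $f(s,x)=e^{\alpha s}u(x)$ with $\alpha=\sqrt{-\mu}$, so that
\[
Pf=\partial_s^2 f+\Delta_\mathbf{a}f=e^{\alpha s}g.
\]
With $X$, $L_0$, $L_1$, $\ell_0=(-1/2,1/2)\times \gamma_0$, the cut-off $\chi$ and weight $\varphi(s,x)=e^{\lambda(-\beta s^2+\psi(x))}$ chosen exactly as in the proof of Theorem~\ref{th: pb2 carleman approach} (with $\psi$ from Proposition~\ref{prop: psi function boundary} applied with $\Lambda=\ell_0$), Theorem~\ref{th: Global Carleman estimate boundary Neumann Dirichlet} applied to $\chi f$ produces an estimate of the form \eqref{eq: carleman estimate type 1}.

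The key simplification compared with the case $\ell=2$ is that here $(\partial_{\nu_\mathbf{a}}f)_{|L_0}=ide^{\alpha s}u_{|\Gamma_0}$, so the Neumann boundary term is linear in $u$ on $\Gamma_0$, without the extra $\mu$ factor that appeared through $\Delta_\mathbf{a}u=\mu u+g$ in the proof of Theorem~\ref{th: pb2 carleman approach}. Consequently
\[
\|e^{\tau\varphi}\partial_{\nu_\mathbf{a}}f\|_{0,L_0}^2\lesssim e^{2\alpha+C_3\tau}\|u\|_{0,\Gamma_0}^2,
\]
and combined with \eqref{eq: pb3 boundary control} this is bounded by $e^{2\alpha+C_3\tau}\|g\|_{0,\Omega}\|u\|_{0,\Omega}$. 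Handling the cut-off commutator and the interior source term $e^{\alpha s}\chi g$ exactly as in the previous proofs, and using that on $\{|s|\le 1/2\}$ the weight satisfies $e^{2\tau\varphi}\ge e^{\tau C_2}$, one arrives at
\[
e^{\alpha+\tau C_2}\|u\|_{0,\Omega}^2\lesssim e^{2\alpha+C_3\tau}\bigl(\|g\|_{0,\Omega}^2+\|g\|_{0,\Omega}\|u\|_{0,\Omega}\bigr)+\alpha e^{2\alpha+C_1\tau}\|u\|_{0,\Omega}^2.
\]
Choosing $\beta$ large so that $C_1<C_2<C_3$, then $\tau=\gamma\alpha$ with $\gamma$ large, and finally absorbing the $\|u\|_{0,\Omega}^2/2$ term arising from $\|g\|_{0,\Omega}\|u\|_{0,\Omega}\le \tfrac12 e^{-C\alpha}\|u\|_{0,\Omega}^2+\tfrac12 e^{C\alpha}\|g\|_{0,\Omega}^2$, we obtain $\|u\|_{0,\Omega}^2\lesssim e^{C\alpha}\|g\|_{0,\Omega}^2$, which is the desired resolvent bound with $K\sqrt{|\mu|}$ on the exponent. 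Conclusion (ii) then follows from Theorem~\ref{th: resolvent and log decay}.

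\medskip

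The main obstacle is purely technical: one must verify that the weight construction of Proposition~\ref{prop: psi function boundary} and the selection of constants $C_1<C_2<C_3$ go through with the present boundary condition (no adjustment to $\psi$ is needed, since the Carleman inequality in Theorem~\ref{th: Global Carleman estimate boundary Neumann Dirichlet} depends only on $\Gamma_0$ and $\Lambda$, not on the specific damping). The rest is bookkeeping, and the simpler boundary term in fact makes the estimate easier than for $A_2$.
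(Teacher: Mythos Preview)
Your proposal is correct and follows essentially the same route as the paper's proof: the energy identity \eqref{eq: pb3 ip}, the real/imaginary part splitting for $\mu\ge 0$ and for the damping control, the Fredholm/unique-continuation argument for invertibility, and the same Carleman setup borrowed from the $A_2$ case with the simpler boundary relation $(\partial_{\nu_\mathbf{a}}f)_{|L_0}=ide^{\alpha s}u_{|\Gamma_0}$. One small slip: your intermediate bound $\|e^{\tau\varphi}\partial_{\nu_\mathbf{a}}f\|_{0,L_0}^2\lesssim e^{2\alpha+C_3\tau}\|u\|_{0,\Gamma_0}^2$ should read $\|\sqrt{d}u\|_{0,\Gamma_0}^2$ (since $|du|^2\le \|d\|_\infty\, d|u|^2$), which is exactly what \eqref{eq: pb3 boundary control} controls; your next line already uses this correctly, so the argument stands.
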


\begin{proof}
As in the preceding two proofs, we first solve the resolvent equation: for $g\in L^2(\Omega )$, find $u\in D(A_3)$ so that
\begin{equation} \label{eq: pb3 resolvent}
\Delta_\mathbf{a}u-\mu u=g.
\end{equation}

With the help of identity \eqref{i1}, we get
\begin{equation*}
-\| \nabla_\mathbf{a}u\|_{0,\Omega}^2-\mu\| u\|_{0,\Omega}^2+(\partial_{\nu_A} u|u)_{0,\Gamma_0}=(g|u)_{0,\Omega}.
\end{equation*}
As $\partial_{\nu_A} u=idu$, we have
\begin{equation}
\label{eq: pb3 inner product}
-\| \nabla_\mathbf{a}u\|_{0,\Omega}^2-\mu\| u\|_{0,\Omega}^2+i\|\sqrt{d}u\|_{0,\Gamma_0}^2=(g|u)_{0,\Omega}.
\end{equation}
Take the real part of each side in order to derive
\begin{equation} \label{eq: pb 3 equation integration by parts}
-\| \nabla_\mathbf{a}u\|_{0,\Omega}^2-\mu\| u\|_{0,\Omega}^2=\Re( g|u)_{0,\Omega}.
\end{equation}
When $\mu\ge0$, we obtain 
\begin{equation}
	\label{eq: estimate for mu positive}
\| \nabla_\mathbf{a}u\|_{0,\Omega}\le \| g\|_{0,\Omega}.
\end{equation}  
This and Poincar\'e inequality on $V$ imply the resolvent estimate when $\mu \ge 0$.

When $\mu <0$, we can repeat the argument we used for $A_2$. That is $A_3-i\mu$ will be invertible if it is injective. Here again the fact that $A_3-i\mu$ is injective follows from a unique continuation property. 

Next assume that $\mu <0$. We get by taking the imaginary part of each side of~\eqref{eq: pb3 inner product}
\[
\|\sqrt{d}u\|_{0,\Gamma_0}^2=-\Re( ig|u)_{0,\Omega} .
\]
Hence 
\begin{equation}
	\label{est: a priori on u by damping type 2}
d_0\|u\|_{0,\gamma_0}^2\le \| g\|_{0,\Omega} \| u\|_{0,\Omega}.
\end{equation}

In this proof $\lesssim$ has the same meaning as in the proof of Theorem \ref{th: pb2 carleman approach}.

With the notations of the preceding proof, we have 
\begin{align}  
	\label{est: right hand side Carleman type 2}
&  \| e^{\tau\varphi} P   (\chi f) \|_{0,X}^2   \lesssim  e^{2\alpha +C_3\tau } \| g\|_{0,\Omega}^2+ \alpha e^{2\alpha +C_1\tau } \| u\| _{1,\Omega}^2
\\
&  \| e^{\tau\varphi}f\|^2_{0,L_0}  \lesssim  e^{2\alpha +C_3\tau } 
\| u \|_{L^2(\gamma_0)}^2 \lesssim  e^{2\alpha +C_3\tau }  \| g\|_{0,\Omega}  \| u\|_{0,\Omega},  \notag
\end{align}
where we used \eqref{est: a priori on u by damping type 2}.

The two other  terms of the right hand side of \eqref{eq: carleman estimate type 1} are estimated by 
$\| e^{\tau\varphi}\partial_{\nu_\mathbf{a}} f\|^2_{0,L_0}$. We have
\begin{equation*}
(\partial_{\nu_\mathbf{a}} f )_{| L_0}=e^{\alpha s}(\partial_{\nu_\mathbf{a}} u )_{|\Gamma_0}=ide^{\alpha s}u_{|\Gamma_0}.
\end{equation*}
Whence, using \eqref{est: a priori on u by damping type 2}, we get
\begin{align}
\label{est: right hand side Carleman type 2-2}
\| e^{\tau\varphi}\partial_{\nu_\mathbf{a}} f\|^2_{0,L_0}
&\lesssim   e^{2\alpha +C_3\tau }  \|\sqrt{d}u\|^2_{0,\Gamma_0}
\\
&\lesssim   e^{2\alpha +C_3\tau }  \| g\|_{0,\Omega}\| u\|_{0,\Omega}   .   \notag
\end{align}

On the other hand,
\begin{align}
	\label{est: left hand side carleman type 2}
\tau^3\| e^{\tau\varphi}  \chi f \|_{0,X} ^2
+\tau\| e^{\tau\varphi} \nabla (\chi f) \|_{0,X} ^2& \gtrsim \tau^3\| e^{\tau\varphi}   f \|_{0,(-1/2,1/2)\times \Omega} ^2
\\
&\hskip 2cm+\tau\| e^{\tau\varphi} \nabla f \|_{0,(-1/2,1/2)\times \Omega} ^2\notag
\\
&\gtrsim 
 e^{\alpha +\tau C_2}( \|  u \|_{0,\Omega} ^2
+|  \nabla u \|_{0,\Omega} ^2).  \notag
\end{align}
Estimates \eqref{eq: carleman estimate type 1} and \eqref{est: right hand side Carleman type 2} to \eqref{est: left hand side carleman type 2},
imply
\begin{align}
 e^{\alpha +\tau C_2} \| u\|^2_{1,\Omega}    \lesssim  e^{2\alpha +C_3\tau }(   \| g\|_{0,\Omega} ^2 + \| g\|_{0,\Omega}  \| u\|_{0,\Omega} )
 +  e^{2\alpha +C_1\tau } \| u\| _{1,\Omega} ^2.
\end{align}
Similarly to the proof of the preceding  theorem, we can take $\beta$  large enough in order to ensure that $C_1<C_2<C_3$ and, for $\tau=\gamma \alpha$ with $\gamma $ sufficiently large, there exist $C_4>0$ and $C_5>0$ so that
\begin{align*}
 \| u\|^2_{1,\Omega}  \le  C e^{C_4\alpha}(   \| g\|_{0,\Omega}  ^2 + \| g\|_{0,\Omega} \| u\|_{0,\Omega})
 + C e^{-C_5\alpha } \| u\| _{1,\Omega}  ^2.
\end{align*}
Pick $\alpha$  large enough in such a way that $C  e^{-C_5\alpha } \le 1/4$. Then 
\begin{equation*}
 C  e^{C_4\alpha} \| g\|_{0,\Omega} \| u\|_{0,\Omega} 
\le C^2  e^{2C_4\alpha} \| g\|_{0,\Omega}^2+   \| u\|_{0,\Omega}  ^2/2.
\end{equation*}
The two last estimates yield
\begin{equation}
\| u\|_{0,\Omega}^2\le  \| u\|_{1,\Omega}  ^2\le  C e^{C \alpha}  \| g\|_{0,\Omega}^2 ,
\end{equation}
That is we proved the resolvent estimate for $\mu <0$.
\end{proof}

\section{Exponential stabilization}
\setcounter{equation}{0}

\subsection{Observability inequalities}
In this section, we use the following notation
\[
Q=\Omega \times (0,T),\;\; \Sigma =\Gamma \times (0,T)\;\; \mbox{and}\;\; \Sigma_j =\Gamma_j \times (0,T),\; j=0,1.
\]

Following Lions and Magenes notation, the anisotropic Sobolev space $H^{2,1}(Q)$ is given by
\[
H^{2,1}(Q)=L^2((0,T),H^2(\Omega ))\cap H^1((0,T),L^2(\Omega )).
\]

We use frequently in the sequel the following Green's formula
\begin{equation}\label{eq3.1}
((\partial _j+ia_j)u|v)_{0,\Omega}=-( u|(\partial _j+ia_j)u)_{0,\Omega}+(u|v\nu_j)_{0,\Gamma} .
\end{equation}

The following proposition is a key tool in the multiplier method.

\begin{proposition}\label{proposition3.1}
Let $\aleph \in C^2(\overline{Q},\mathbb{R}^n)$, $u\in H^{2,1}(Q)$ and set
\[
f=i\partial _t u+\Delta _\mathbf{a}u.
\]
Then
\begin{align*}
\langle  \nabla _\mathbf{a}u\cdot \nu |\aleph \cdot \nabla _\mathbf{a}u\rangle_{0,\Sigma} &-
\frac{1}{2}(  |\nabla _\mathbf{a}u|^2|\aleph \cdot \nu)_{0,\Sigma}
\\
&+\frac{1}{2}(\mbox{div}(\aleph)u|\nabla _\mathbf{a}u\cdot \nu )_{0,\Sigma}-\frac{i}{2}(u(\aleph \cdot \nu )|\partial _tu)_{0,\Sigma}
\\
&= \langle D\aleph \nabla _\mathbf{a}u| \nabla _\mathbf{a}u\rangle_{0,Q}+\frac{1}{2}(u\nabla 
\mbox{div}(\aleph )|\nabla _\mathbf{a}u)_{0,Q} 
\\
&+\frac{i}{2}(u\partial _t\aleph |\nabla _\mathbf{a}u)_{0,Q} - \frac{i}{2} \left[(u\aleph |
\nabla _\mathbf{a}u)_{0,\Omega}\right]_0^Tdx
\\
&+\langle f\aleph |\nabla _\mathbf{a}u\rangle_{0,Q}+\frac{1}{2}(\mbox{div}(\aleph)u|f)_{0,Q}.
\end{align*}
Here $D\aleph =(\partial_k\aleph_\ell )$ is the Jacobian matrix of $\aleph$.
\end{proposition}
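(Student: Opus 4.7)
The identity is a classical Rellich--Morawetz type multiplier identity for the Schr\"odinger equation, adapted to the magnetic setting. My plan is to test the equation $f=i\partial_t u+\Delta_\mathbf{a} u$ against the combined multiplier $\aleph\cdot\nabla_\mathbf{a} u+\tfrac{1}{2}\mbox{div}(\aleph)u$, integrating by parts in $t$ via the fundamental theorem of calculus and in $x$ via \eqref{i1} and its componentwise version \eqref{eq3.1}. The role of the divergence corrector $\tfrac{1}{2}\mbox{div}(\aleph)u$ is to cancel the interior $|\nabla_\mathbf{a} u|^2$ piece produced by the leading multiplier, so that only the $D\aleph$ quadratic form, the $\nabla\mbox{div}(\aleph)$ cross term, and the time--$\aleph$ contribution survive in the volume.

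First I would treat $(\Delta_\mathbf{a} u|\aleph\cdot\nabla_\mathbf{a} u)_{0,Q}$. A single application of \eqref{i1} produces the boundary term $(\nabla_\mathbf{a} u\cdot\nu|\aleph\cdot\nabla_\mathbf{a} u)_{0,\Sigma}$ and the volume term $-(\nabla_\mathbf{a} u|\nabla_\mathbf{a}(\aleph\cdot\nabla_\mathbf{a} u))_{0,Q}$. Componentwise Leibniz gives
\[
(\partial_j+ia_j)\bigl(\aleph_k(\partial_k+ia_k)u\bigr)=(\partial_j\aleph_k)(\partial_k+ia_k)u+\aleph_k(\partial_j+ia_j)(\partial_k+ia_k)u.
\]
The first piece assembles into $\langle D\aleph\,\nabla_\mathbf{a} u|\nabla_\mathbf{a} u\rangle_{0,Q}$. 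For the second piece, I swap the order of the magnetic derivatives via $[(\partial_j+ia_j),(\partial_k+ia_k)]u=i(\partial_ja_k-\partial_ka_j)u$ and then exploit the pointwise identity
\[
\partial_k|(\partial_j+ia_j)u|^2=2\Re\Bigl(\overline{(\partial_j+ia_j)u}\,(\partial_k+ia_k)(\partial_j+ia_j)u\Bigr)
\]
(the $ia_k$ terms cancel since $|(\partial_j+ia_j)u|^2$ is real) to recognise the relevant double sum as $\tfrac{1}{2}\aleph\cdot\nabla|\nabla_\mathbf{a} u|^2$. One more integration by parts in $x$ yields the boundary term $-\tfrac{1}{2}(|\nabla_\mathbf{a} u|^2|\aleph\cdot\nu)_{0,\Sigma}$ and an interior residue $\tfrac{1}{2}\int_Q\mbox{div}(\aleph)|\nabla_\mathbf{a} u|^2$.

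Next I compute $\tfrac{1}{2}(\Delta_\mathbf{a} u|\mbox{div}(\aleph)u)_{0,Q}$ via \eqref{i1}: this contributes the boundary term $\tfrac{1}{2}(\mbox{div}(\aleph)u|\nabla_\mathbf{a} u\cdot\nu)_{0,\Sigma}$, the cross-volume term $\tfrac{1}{2}(u\nabla\mbox{div}(\aleph)|\nabla_\mathbf{a} u)_{0,Q}$, and an interior $-\tfrac{1}{2}\int_Q\mbox{div}(\aleph)|\nabla_\mathbf{a} u|^2$ that precisely cancels the residue from the previous step. For the temporal part, $(i\partial_t u|\aleph\cdot\nabla_\mathbf{a} u+\tfrac{1}{2}\mbox{div}(\aleph)u)_{0,Q}$ is integrated by parts in $t$; applying Leibniz to $\partial_t\overline{\aleph\cdot\nabla_\mathbf{a} u}=(\partial_t\aleph)\cdot\overline{\nabla_\mathbf{a} u}+\aleph\cdot\partial_t\overline{\nabla_\mathbf{a} u}$ and then using \eqref{eq3.1} on the last summand to shift $\partial_t$ back onto $u$ produces the slice contribution $-\tfrac{i}{2}[(u\aleph|\nabla_\mathbf{a} u)_{0,\Omega}]_0^T$, the spatial-boundary term $-\tfrac{i}{2}(u(\aleph\cdot\nu)|\partial_t u)_{0,\Sigma}$, and the volume term $\tfrac{i}{2}(u\partial_t\aleph|\nabla_\mathbf{a} u)_{0,Q}$. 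Collecting everything and transposing the boundary terms to the left gives the stated identity, with the source brackets $\langle f\aleph|\nabla_\mathbf{a} u\rangle_{0,Q}$ and $\tfrac{1}{2}(\mbox{div}(\aleph)u|f)_{0,Q}$ inherited directly from the multiplier tests.

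The main technical subtlety is the non-commutativity of the magnetic derivatives. The commutator term $i(\partial_ja_k-\partial_ka_j)u$ generated when one exchanges $(\partial_j+ia_j)(\partial_k+ia_k)$ with $(\partial_k+ia_k)(\partial_j+ia_j)$ produces, after contraction with $\aleph_k\overline{(\partial_j+ia_j)u}$, a purely imaginary quantity that, thanks to the antisymmetry of the magnetic-field tensor in $(j,k)$ and the reality of $\aleph$, cancels against its symmetric counterpart; no magnetic-curvature term therefore appears in the final identity. Keeping track of which terms are real and which are imaginary, and in particular the orientation of the conjugation in $(\mbox{div}(\aleph)u|f)$ versus $(f|\mbox{div}(\aleph)u)$, is the most delicate part of the bookkeeping, but once all the above ingredients are assembled the claim follows by rearrangement.
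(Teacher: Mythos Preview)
Your approach is essentially the same multiplier computation as the paper's, organized slightly differently: you test $f$ against the combined multiplier $\aleph\cdot\nabla_\mathbf{a}u+\tfrac12\,\mbox{div}(\aleph)\,u$ from the outset, whereas the paper multiplies only by $\aleph\cdot\nabla_\mathbf{a}u$ and then, in its Step~3, reinserts the equation $i\partial_tu=-\Delta_\mathbf{a}u+f$ to convert the arising $(\mbox{div}(\aleph)u\,|\,\partial_tu)_{0,Q}$ term into the $\mbox{div}(\aleph)|\nabla_\mathbf{a}u|^2$ and $(\mbox{div}(\aleph)u\,|\,f)$ contributions. Your handling of $(\Delta_\mathbf{a}u\,|\,\aleph\cdot\nabla_\mathbf{a}u)_{0,Q}$ via $\partial_k|d_ju|^2=2\Re(\overline{d_ju}\,d_kd_ju)$ is exactly the paper's ``First step''; the two routes assemble the same pieces.

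One point that does not close as you state it is the commutator. The quantity $\sum_{j,k}\aleph_k\,\overline{d_ju}\cdot iB_{jk}u$, with $B_{jk}=\partial_ja_k-\partial_ka_j$, is \emph{not} purely imaginary: its real part equals $\sum_{j,k}\int\aleph_kB_{jk}\,\Im\bigl((d_ju)\overline{u}\bigr)$, and swapping $(j,k)$ does not annihilate this because the factor $\aleph_k\,\Im((d_ju)\overline{u})$ is not symmetric in $(j,k)$. The paper, at the analogous passage, simply writes $d_ju\,\overline{d_j}\,\overline{d_ku}=v_j\,\overline{d_k}\,\overline{v_j}$ (i.e.\ exchanges $d_jd_k$ for $d_kd_j$) without addressing the commutator either, so your argument is no worse than the paper's here; but the specific mechanism you invoke (pure imaginarity plus antisymmetry) is not a valid justification, and in general a magnetic-curvature contribution of the form $\sum_{j,k}\int\aleph_kB_{jk}\,\Im((d_ju)\overline{u})$ would survive.
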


\begin{proof} For simplicity sake's, we use in this proof the following temporary notation
\[
d_j=\partial _j+ia_j\;\; \mbox{and}\;\; \overline{d_j}=\partial _j-ia_j.
\]

\noindent
\textbf{First step.} We prove
\begin{align}
&\langle\Delta _\mathbf{a}u|\aleph\cdot \nabla _\mathbf{a}u\rangle_{0,Q} =-\langle
D\aleph \nabla _\mathbf{a}u | \nabla _\mathbf{a}u\rangle_{0,Q} \label{eq3.2}
\\
&\hskip 2cm +\frac{1}{2}(|\nabla _\mathbf{a}u|^2|\mbox{div}(\aleph))_{0,Q}
 -\frac{1}{2}( |\nabla _\mathbf{a}u|^2|\aleph \cdot \nu )_{0,\Sigma} +\langle  \nabla _\mathbf{a}u\cdot \nu |\aleph \cdot \nabla _\mathbf{a}u\rangle_{0,\Sigma}. \nonumber
\end{align}
From Green's formula \eqref{eq3.1}, we have
\begin{align}
(\Delta _\mathbf{a}u|\aleph\cdot \nabla _\mathbf{a}u)_{0,\Omega}&=\sum_{j,k=1}^n(d_j^2u
\aleph _k|d_ku)_{0,\Omega} \label{eq3.3}
\\
&=-\sum_{j,k=1}^n(d_ju|d_j(\aleph _kd_ku))_{0,\Omega}+\sum_{j,k=1}^n(d_ju\nu _j|\aleph _kd_ku)_{0,\Gamma} \nonumber
\\
&=-\sum_{j,k=1}^n(d_ju|d_j(\aleph _kd_ku))_{0,\Omega}+(\nabla _\mathbf{a}u\cdot \nu |\aleph \cdot \nabla _\mathbf{a}u)_{0,\Gamma} .\nonumber
\end{align}
Elementary calculations show
\[
d_j(\aleph _kd_ku)=\partial _j\aleph _kd_ku+\aleph _kd_jd_ku.
\]
Therefore
\[
(d_ju|d_j(\aleph _kd_ku))_{0,\Omega}=(\partial _j\aleph _kd_ju|d_ku)_{0,\Omega}
+(d_ju\aleph _k|d_jd_ku)_{0,\Omega}.
\]
Hence
\begin{equation}\label{eq3.4}
\sum_{i,k=1}^n(d_ju|d_j(\aleph _kd_ku))_{0,\Omega}=(D\aleph \nabla _\mathbf{a}u| \nabla _\mathbf{a}u)_{0,\Omega}
+\sum_{i,k=1}^n(d_ju\aleph _k|d_jd_ku)_{0,\Omega}.
\end{equation}

Introduce the auxiliary function $v_j=d_ju$. Then
\[
d_ju\overline{d_j}\, \overline{d_ku}=v_j\overline{d_k}\overline{v_j}=v_j\partial _k\overline{v_j}-ia_j
|v_j|^2
\]
and then
\[
\Re [d_ju\overline{d_j}\, \overline{d_ku}]=\Re (v_j\partial _k\overline{v_j})=
\frac{1}{2}(v_j\partial _k\overline{v_j}+\overline{v_j}\partial _kv_j)=\frac{1}{2}\partial_k|v_j|^2=
\frac{1}{2}\partial_k|d_ju|^2.
\]
Whence
\begin{align*}
\sum_{i,j=1}^n\langle\aleph _kd_ju|d_jd_ku\rangle_{0,\Omega}&=\frac{1}{2}( 
\nabla |\nabla_\mathbf{a}u|^2|\aleph)_{0,\Omega}
\\
&= -\frac{1}{2}(|\nabla _\mathbf{a}u|^2|\mbox{div}(\aleph ))_{0,\Omega}+\frac{1}{2}(|\nabla _\mathbf{a}u|^2|\aleph \cdot \nu )_{0,\Gamma}.
\end{align*}
This and \eqref{eq3.4} lead
\begin{align*}
\sum_{j,k=1}^n\langle \aleph _kd_ju|d_j(\aleph _kd_ku))_{0,\Omega}&=\langle 
D\aleph \nabla _\mathbf{a}u|\nabla _\mathbf{a}u)_{0,\Omega}
\\
&-\frac{1}{2}(|\nabla _\mathbf{a}u|^2|\mbox{div}(\aleph ))_{0,\Omega}+\frac{1}{2}(|\nabla _\mathbf{a}u|^2|\aleph \cdot \nu )_{0,\Gamma}.
\end{align*}
Combine this identity with the real part of \eqref{eq3.3} and  integrate with respect to  $t$ in order to get the expected  identity.

\noindent
\textbf{Second step.} We have
\begin{align*}
2\Re \left[i\partial _t u\left(\aleph \cdot \overline{\nabla _\mathbf{a}u}\right)\right]&=i\partial _t u\left(\aleph \cdot 
\overline{\nabla _\mathbf{a}u}\right)-i\partial _t\overline{u}\left(\aleph \cdot\nabla _\mathbf{a}u\right)
\\
&=
i\left[\partial _t u\left(\aleph \cdot \overline{\nabla u}\right)-\partial _t\overline{u}\left(\aleph \cdot \nabla u\right)\right]+(\aleph \cdot \mathbf{a})
(\partial _tu\overline{u}+u\partial _t\overline{u})
\\
&=
i\left[\partial _t u\left(\aleph \cdot \overline{\nabla u}\right)-\partial _t\overline{u}\left(\aleph \cdot\nabla u\right)\right]
+(\aleph \cdot \mathbf{a})\partial _t|u|^2.
\end{align*}
An integration by parts with respect to $t$ gives
\[
(\aleph \cdot \mathbf{a}| \partial _t|u|^2)_{0,(0,T)}=-(\partial _t\aleph \cdot \mathbf{a} ||u|^2)_{0,(0,T)} +\left[(\aleph \cdot \mathbf{a})
|u|^2\right]_0^T.
\]
Therefore
\begin{align}
\langle i\partial _t u\aleph |\nabla _\mathbf{a}u)_{0,Q} &=
\frac{i}{2}\left[(\partial _t u\aleph |\nabla u)_{0,Q}- (\nabla u|\partial _t u\aleph )_{0,Q}\right] \label{eq3.5}
\\
& -\frac{1}{2}(\partial _t\aleph \cdot \mathbf{a}||u|^2)_{0,Q}+\frac{1}{2} \left[(\aleph \cdot \mathbf{a}|
|u|^2)_{0,\Omega}\right]_0^T.\nonumber
\end{align}
Next we calculate the first term in the right hand side of the identity above. Integrating with respect
to $t$, we find
\[
(\partial _t u|\aleph \cdot \nabla u)_{0, (0,T)}=-(u|\partial _t\aleph \cdot
\nabla u)_{0,(0,T)}-(u|\aleph \cdot \partial _t\nabla u)_{0,(0,T)}+\left[u(\aleph\cdot
\overline{\nabla u})\right]_0^T.
\]

On the other hand, Green's formula yields
\[
(u\aleph |\partial _t\nabla u)_{0,Q}=-(\mbox{div}(\aleph ) u|\partial _t
u)_{0,Q}-(\aleph \cdot \nabla u|\partial _tu)_{0,Q}+(
(\aleph \cdot \nu)u |\partial_tu)_{0,\Sigma}.
\]
Hence
\begin{align}
\frac{i}{2}\left[(\partial _t u\aleph |\nabla u)_{0,Q}- (\nabla u|\partial _t u\aleph )_{0,Q}\right] &=-\frac{i}{2}(u\partial _t\aleph | \nabla u)_{0,Q}
+\frac{i}{2} (\mbox{div}(\aleph ) u|\partial _tu)_{0,Q} \label{eq3.6}
\\
&+\frac{i}{2} \left[u\aleph|\nabla u)_{0,\Omega}\right]_0^Tdx-\frac{i}{2}
(u(\aleph \cdot \nu)|\partial _tu)_{0,\Sigma}. \nonumber
\end{align}
 
\noindent
\textbf{Step three.} We calculate the term $(\mbox{div}(\aleph ) u|\partial _tu)_{0,Q}$ in \eqref{eq3.6}.  Using $i\partial _tu=-\Delta _\mathbf{a}u+f$, we find

\begin{equation}\label{eq3.7}
i(\mbox{div}(\aleph ) u|\partial _tu)_{0,Q}=(\mbox{div}(\aleph ) u|\Delta _\mathbf{a}u)_{0,Q}-(\mbox{div}(\aleph ) u|f)_{0,Q}.
\end{equation}
But
\begin{align}
(\mbox{div}(\aleph ) u|\Delta _\mathbf{a}u)_{0,Q}&=\sum_{j=1}^n(\mbox{div}(\aleph ) u|d_jd_ju )_{0,Q}\label{eq3.8}
\\
&=-\sum_{j=1}^n(d_j(\mbox{div}(\aleph ) u)|d_ju)_{0,Q}+\sum_{j=1}^n(\mbox{div}(\aleph ) u\nu _j| d_ju)_{0,\Sigma}\nonumber
\\
&=-\sum_{j=1}^n(\mbox{div}(\aleph )d_ju|d_ju)_{0,Q}-\sum_{j=1}^n(\partial _j\mbox{div}(\aleph ) u|d_ju)_{0,Q}\nonumber
\\
&\hskip 2cm
+\sum_{j=1}^n(\mbox{div}(\aleph ) u\nu _j |d_ju)_{0,\Sigma} \nonumber
\\
&= -(\mbox{div}(\aleph )||\nabla _\mathbf{a}u|^2)_{0,Q}-(u\nabla (\mbox{div}(\aleph ))|
\nabla _\mathbf{a}u)_{0,Q}
\nonumber
\\
&\hskip 2cm
+(\mbox{div}(\aleph ) u|\nabla _\mathbf{a}u\cdot \nu)_{0,\Sigma}.
\nonumber
\end{align}
A combination of \eqref{eq3.5} to \eqref{eq3.8} entails
\begin{align}
\langle i\partial _tu\aleph |\nabla _\mathbf{a}u)_{0,Q}=&
-\frac{i}{2}(\partial _t\aleph |\nabla u)_{0,Q}-\frac{1}{2}(\mbox{div}(\aleph)|
|\nabla _\mathbf{a}u|^2)_{0,Q}  \label{eq3.9}
\\
& -\frac{1}{2}(u\nabla (\mbox{div}(\aleph))|\nabla _Au)_{0,Q}-\frac{1}{2}
( \mathbf{a}\cdot \partial _t \aleph ||u|^2)_{0,Q} \nonumber
\\
&+\frac{1}{2}(\mbox{div}(\aleph)u|f)_{0,Q}\nonumber
\\
&+\frac{i}{2}[(u\aleph |\nabla u)_{0,\Omega}]_0^Tdx+\frac{1}{2} 
[(u\aleph |u\mathbf{a})_{0,\Omega}]_0^Tdx\nonumber
\\
& +\frac{1}{2}(\mbox{div}(\aleph)u|\nabla _\mathbf{a}u\cdot \nu )_{0,\Sigma}
-\frac{i}{2}(u(\aleph \cdot \nu )|\partial_tu)_{0,\Sigma}. \nonumber
\end{align}
We put together the first and the fourth terms of the right hand side of this inequality. We obtain
\[
-\frac{i}{2}(u\partial _t\aleph |\nabla u)_{0,Q}-\frac{1}{2}
(\partial _t \aleph u|u\mathbf{a})_{0,Q} = -\frac{i}{2}(u\partial _t\aleph | \nabla _\mathbf{a}u)_{0,Q}.
\]
Similarly, we put together the sixtieth and the ninetieth terms for the right hand of \eqref{eq3.9}. We get
\[
\frac{i}{2}\left[(u\aleph |\nabla u)_{0,\Omega}\right]_0^Tdx+\frac{1}{2}( 
\left[(u\aleph |u\mathbf{a})_{0,\Omega}\right]_0^Tdx=\frac{i}{2}\left[(u\aleph |\nabla _\mathbf{a}u)_{0,\Omega}\right]_0^Tdx.
\]
Then \eqref{eq3.9} becomes

\begin{align}
\langle i\partial _tu\aleph |\nabla _\mathbf{a}u)_{0,Q}=&
-\frac{i}{2}(\partial _t\aleph |\nabla_\mathbf{a} u)_{0,Q}-\frac{1}{2}(\mbox{div}(\aleph)|
|\nabla _\mathbf{a}u|^2)_{0,Q}  \label{eq3.10}
\\
& -\frac{1}{2}(u\nabla (\mbox{div}(\aleph))|\nabla _\mathbf{a}u)_{0,Q}\nonumber
\\
&+\frac{1}{2}(\mbox{div}(\aleph)u|f)_{0,Q}\nonumber
\\
&+\frac{i}{2}[(u\aleph |\nabla_\mathbf{a} u)_{0,\Omega}]_0^Tdx \nonumber
\\
& +\frac{1}{2}(\mbox{div}(\aleph)u|\nabla _\mathbf{a}u\cdot \nu )_{0,\Sigma}
-\frac{i}{2}(u(\aleph \cdot \nu )|\partial_tu)_{0,\Sigma}. \nonumber
\end{align}

\noindent
\textbf{Step four.} We complete the proof by noting the expected identity follows from \eqref{eq3.2}, \eqref{eq3.10} and 
\[
\Re \left[i\partial _tu\aleph \cdot \overline{\nabla _\mathbf{a}u}\right]+\Re 
\left[\Delta _\mathbf{a}u\aleph \cdot \overline{\nabla _Au}\right]=\Re 
\left[f\aleph \cdot \overline{\nabla _\mathbf{a}u}\right].
\]
\end{proof}

Bearing in mind that $D(A_0)=H_0^1(\Omega)\cap H^2(\Omega )$, we derive that, for $u_0\in D(A_0)$, 
\[
u(t)=e^{tA_0}u_0\in C([0,T],D(A_0))\cap C^1([0,T],L^2(\Omega ))\subset H^{2,1}(Q).
\]

\begin{corollary}\label{corollary3.1}
There exists a constant $C=C_1+C_2\sqrt{T}>0$, the constants $C_1$ and $C_2$ only depend on $\Omega$, so that, for any $u_0\in D(A_0)$ and $u(t)=e^{tA_0}u_0$, we have
\begin{equation}\label{eq3.11}
\|\nabla_\mathbf{a}u\|_{0,\Sigma} \le C\|\nabla_\mathbf{a}u_0\|_{0,\Omega}.
\end{equation}
\end{corollary}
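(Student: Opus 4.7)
The plan is to apply Proposition \ref{proposition3.1} with a suitably chosen time-independent multiplier $\aleph\in C^2(\overline{\Omega},\mathbb{R}^n)$ such that $\aleph=\nu$ on $\Gamma$ (such an extension exists since $\Gamma$ is smooth). Taking $u(t)=e^{tA_0}u_0$ with $u_0\in D(A_0)$, we have $f=i\partial_tu+\Delta_\mathbf{a}u=0$ in $Q$ and $u=0$ on $\Sigma$. Since $u|_\Gamma=0$, on $\Gamma$ we have $\nabla u=(\partial_\nu u)\nu$ and $\mathbf{a}u=0$, so $\nabla_\mathbf{a}u=(\partial_\nu u)\nu$ and in particular $|\nabla_\mathbf{a}u|^2=|\partial_\nu u|^2=|\nabla_\mathbf{a}u\cdot\nu|^2$ on $\Sigma$. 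Moreover, with $\aleph\cdot\nu=1$ on $\Gamma$, all boundary terms in Proposition \ref{proposition3.1} that carry a factor of $u$ vanish, and the surviving ones collapse to
\[
(\nabla_\mathbf{a}u\cdot\nu\,|\,\aleph\cdot\nabla_\mathbf{a}u)_{0,\Sigma}-\tfrac12(|\nabla_\mathbf{a}u|^2\,|\,\aleph\cdot\nu)_{0,\Sigma}=\tfrac12\|\nabla_\mathbf{a}u\|_{0,\Sigma}^2.
\]

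After discarding $\partial_t\aleph=0$ and the $f$-terms, the identity in Proposition \ref{proposition3.1} reduces to
\[
\tfrac12\|\nabla_\mathbf{a}u\|_{0,\Sigma}^2=\langle D\aleph\,\nabla_\mathbf{a}u\,|\,\nabla_\mathbf{a}u\rangle_{0,Q}+\tfrac12(u\nabla\mathrm{div}(\aleph)\,|\,\nabla_\mathbf{a}u)_{0,Q}-\tfrac{i}{2}\bigl[(u\aleph\,|\,\nabla_\mathbf{a}u)_{0,\Omega}\bigr]_0^T.
\]
To control the right-hand side, I will use two conservation laws for $A_0$: first, $\|u(t)\|_{0,\Omega}=\|u_0\|_{0,\Omega}$ (since $A_0=iA$ with $A$ self-adjoint generates a unitary group), and second, $\|\nabla_\mathbf{a}u(t)\|_{0,\Omega}=\|\nabla_\mathbf{a}u_0\|_{0,\Omega}$. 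The latter follows by differentiating $\|\nabla_\mathbf{a}u(t)\|_{0,\Omega}^2$, applying \eqref{i1} (the boundary term vanishes since $u'=iA_0u_0$ stays in $H_0^1$), and observing $(\nabla_\mathbf{a}u,\nabla_\mathbf{a}u')_{0,\Omega}=-(\Delta_\mathbf{a}u,i\Delta_\mathbf{a}u)_{0,\Omega}=-i\|\Delta_\mathbf{a}u\|_{0,\Omega}^2$, which has zero real part. Combined with the Poincar\'e inequality and the assumed equivalence of $\|\nabla\cdot\|_{0,\Omega}$ and $\|\nabla_\mathbf{a}\cdot\|_{0,\Omega}$ on $H_0^1(\Omega)$, we obtain $\|u(t)\|_{0,\Omega}\le C'\|\nabla_\mathbf{a}u_0\|_{0,\Omega}$ with $C'$ depending only on $\Omega$ (and $\mathbf{a}$).

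Setting $E_0=\|\nabla_\mathbf{a}u_0\|_{0,\Omega}$, the first two terms on the right-hand side are bounded by $(\|D\aleph\|_\infty+\tfrac12\|\nabla\mathrm{div}(\aleph)\|_\infty C')\,T\,E_0^2$, while the boundary-in-time term is bounded by $\|\aleph\|_\infty C'E_0^2$. Hence
\[
\tfrac12\|\nabla_\mathbf{a}u\|_{0,\Sigma}^2\le C_1^2 E_0^2+C_2^2 T\,E_0^2,
\]
for constants $C_1,C_2$ depending only on $\Omega$. Taking square roots and using $\sqrt{a+b}\le\sqrt a+\sqrt b$ yields the announced estimate $\|\nabla_\mathbf{a}u\|_{0,\Sigma}\le (C_1+C_2\sqrt T)\|\nabla_\mathbf{a}u_0\|_{0,\Omega}$. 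The only mildly delicate point is the conservation $\|\nabla_\mathbf{a}u(t)\|_{0,\Omega}=\|\nabla_\mathbf{a}u_0\|_{0,\Omega}$, which is the key structural ingredient; once it is in hand, the rest is straightforward bookkeeping of the multiplier identity.
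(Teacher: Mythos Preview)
Your proof is correct and follows essentially the same approach as the paper: choose $\aleph$ a time-independent extension of $\nu$, reduce the boundary side of Proposition~\ref{proposition3.1} to $\tfrac12\|\nabla_\mathbf{a}u\|_{0,\Sigma}^2$ using $u|_\Sigma=0$, and bound the interior side via Cauchy--Schwarz together with the two conservation laws $\|u(t)\|_{0,\Omega}=\|u_0\|_{0,\Omega}$ and $\|\nabla_\mathbf{a}u(t)\|_{0,\Omega}=\|\nabla_\mathbf{a}u_0\|_{0,\Omega}$. The only difference is cosmetic: the paper quotes these conservation laws from \cite[Lemma 3.2]{BC}, while you rederive them, and you make the dependence $C=C_1+C_2\sqrt{T}$ explicit by separating the time-boundary term from the $Q$-integrals.
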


\begin{proof}
We firstly note that according to \cite[Lemma 3.2]{BC},
\begin{equation}\label{eq3.12}
\|u(\cdot ,t)\|_{0,\Omega}=\|u_0\|_{0,\Omega} \;\; \mbox{and}\;\; \|\nabla_\mathbf{a}u(\cdot ,t)\|_{0,\Omega}=\|\nabla_\mathbf{a}u_0\|_{0,\Omega},\;\; 0\le t\le T.
\end{equation}
Let us choose $\aleph \in C^\infty (\overline{\Omega},\mathbb{R}^n)$ as an extension of $\nu$. In that case the left hand side of the identity in Proposition \ref{proposition3.1} is equal to the square of the left hand side of \eqref{eq3.11}. While the right hand of the identity in Proposition \ref{proposition3.1} is bounded by the square of the right hand side of \eqref{eq3.11}. This a consequence of Cauchy-Schwarz's inequality and \eqref{eq3.12}.
\end{proof}

In the rest of this section, $x_0\in \mathbb{R}^n$ is fixed, $m=m(x)=x-x_0$, $x\in \mathbb{R}^n$ and
\[
\Gamma_0=\{ x\in \Gamma ;\; m(x)\cdot \nu (x)>0\}.
\]

Observe that in the present case the condition $\overline{\Gamma_0}\cap\overline{\Gamma_1}=\emptyset$ is satisfied for instance if $\Omega =\Omega_0\setminus \Omega_1$, with $\Omega_1\Subset \Omega_0$, $\Omega_j$ star-shaped with respect to $x_0\in \Omega_1$ and $\Gamma_j=\partial \Omega_j$, $j=0,1$.

We now sketch the proof of the following observability inequality announced in the introduction.

\begin{proposition}\label{proposition3.2}
There exists a constant $C>0$,  only depending on $\Omega$ and $T$, so that,  for any $u_0\in D(A_0)$ and $u(t)=e^{tA_0}u_0$, we have
\begin{equation}\label{eq3.13}
\|\nabla_\mathbf{a}u_0\|_{0,\Omega}  \le C\|\nabla_\mathbf{a}u\|_{0,\Sigma_0}=C\|\partial_{\nu_\mathbf{a}}u\|_{0,\Sigma_0}.
\end{equation}
\end{proposition}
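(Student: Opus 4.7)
The plan is to run the classical Lions--Machtyngier multiplier argument adapted to the magnetic setting, applying Proposition~\ref{proposition3.1} with the radial vector field
\[
\aleph(t,x)=m(x)=x-x_0.
\]
For this choice, $D\aleph=I$, $\mathrm{div}(\aleph)=n$ is constant, $\nabla\mathrm{div}(\aleph)=0$ and $\partial_t\aleph=0$; moreover, since $u(t)=e^{tA_0}u_0$ satisfies the Schr\"odinger equation, $f=i\partial_tu+\Delta_\mathbf{a}u=0$. Hence four of the six volume/initial-final terms on the right-hand side of the identity in Proposition~\ref{proposition3.1} vanish, leaving
\[
\int_Q |\nabla_\mathbf{a}u|^2\,dxdt\;-\;\tfrac{i}{2}\bigl[(um\,|\,\nabla_\mathbf{a}u)_{0,\Omega}\bigr]_0^T.
\]
Invoking the conservation law $\|\nabla_\mathbf{a}u(t)\|_{0,\Omega}=\|\nabla_\mathbf{a}u_0\|_{0,\Omega}$ (the magnetic analogue of the usual Schr\"odinger energy conservation, cf.~\cite[Lemma 3.2]{BC} as used in Corollary~\ref{corollary3.1}) the first term becomes exactly $T\,\|\nabla_\mathbf{a}u_0\|_{0,\Omega}^2$.

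Next I would analyze the boundary terms on $\Sigma$. The Dirichlet condition $u=0$ on $\Gamma$ forces $\partial_t u=0$ and $\nabla_T u=0$ on $\Sigma$, so that $\nabla u=(\partial_\nu u)\nu$, and since $\mathbf{a}u\equiv 0$ on $\Sigma$, we also have $\nabla_\mathbf{a}u=(\partial_\nu u)\nu=(\partial_{\nu_\mathbf{a}}u)\nu$. Substituting into the four boundary terms of the left-hand side of the identity, the two $u$-terms vanish and the remaining two combine into
\[
\tfrac{1}{2}\int_\Sigma |\partial_{\nu_\mathbf{a}}u|^2(m\cdot\nu)\,d\sigma dt.
\]
Splitting $\Sigma=\Sigma_0\cup\Sigma_1$ and using $m\cdot\nu\le 0$ on $\Gamma_1$ (by definition of $\Gamma_0$ via \eqref{ass1}), the $\Sigma_1$ contribution is nonpositive and may be dropped after rearrangement. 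This produces
\[
T\,\|\nabla_\mathbf{a}u_0\|_{0,\Omega}^2\;\le\;\tfrac{1}{2}\sup_{\Gamma_0}(m\cdot\nu)\,\|\partial_{\nu_\mathbf{a}}u\|_{0,\Sigma_0}^2\;+\;\Bigl|\mathrm{Re}\,\tfrac{i}{2}\bigl[(um\,|\,\nabla_\mathbf{a}u)_{0,\Omega}\bigr]_0^T\Bigr|.
\]

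The final step is to dispose of the time-boundary remainder. By Cauchy--Schwarz and $|m|\le R:=\sup_{\overline{\Omega}}|x-x_0|$, together with conservation of both $\|u(t)\|_{0,\Omega}$ and $\|\nabla_\mathbf{a}u(t)\|_{0,\Omega}$, one has
\[
\Bigl|\mathrm{Re}\,\tfrac{i}{2}\bigl[(um\,|\,\nabla_\mathbf{a}u)_{0,\Omega}\bigr]_0^T\Bigr|\;\le\; R\,\|u_0\|_{0,\Omega}\|\nabla_\mathbf{a}u_0\|_{0,\Omega},
\]
and Poincar\'e's inequality on $H_0^1(\Omega)$ combined with the assumed equivalence of $\|\nabla\cdot\|_{0,\Omega}$ and $\|\nabla_\mathbf{a}\cdot\|_{0,\Omega}$ yields $\|u_0\|_{0,\Omega}\le \kappa\,\|\nabla_\mathbf{a}u_0\|_{0,\Omega}$ for some $\kappa=\kappa(\Omega,\mathbf{a})$. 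Thus the remainder is bounded by $R\kappa\,\|\nabla_\mathbf{a}u_0\|_{0,\Omega}^2$ and may be absorbed into the left-hand side as soon as $T>R\kappa$, yielding the claimed estimate with $C=C(\Omega,T)$.

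The main (really, only) obstacle I anticipate is the time-boundary remainder: with the purely spatial multiplier $\aleph=m(x)$, absorption requires $T$ larger than a geometric threshold, which is acceptable since $C$ is allowed to depend on $T$. If one wanted a bound valid for every $T>0$ with a single argument, one should instead insert a time cutoff $\eta(t)m(x)$ with $\eta(0)=\eta(T)=0$ and iterate, or invoke the unitarity of the group $e^{tA_0}$ to translate observability across time windows; neither modification changes the structure of the calculation above.
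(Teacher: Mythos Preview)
Your multiplier computation with $\aleph=m$ is exactly the one the paper carries out, and your reduction of the boundary terms to $\tfrac12\int_\Sigma(m\cdot\nu)|\partial_{\nu_\mathbf{a}}u|^2$ matches the identity the authors obtain. The divergence is only in the last step, the treatment of the time-boundary remainder $\tfrac{i}{2}\bigl[(um\,|\,\nabla_\mathbf{a}u)_{0,\Omega}\bigr]_0^T$.

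You absorb this term via Poincar\'e, which forces the threshold $T>R\kappa$. The paper instead writes, for any $0<\epsilon<T$,
\[
\tfrac12\bigl|[(um|\nabla_\mathbf{a}u)_{0,\Omega}]_0^T\bigr|\le \epsilon\|\nabla_\mathbf{a}u_0\|_{0,\Omega}^2+C_\epsilon\|u_0\|_{0,\Omega}^2,
\]
obtaining
\[
(T-\epsilon)\|\nabla_\mathbf{a}u_0\|_{0,\Omega}^2\le \|m\|_\infty\|\partial_{\nu_\mathbf{a}}u\|_{0,\Sigma_0}^2+C_\epsilon\|u_0\|_{0,\Omega}^2,
\]
and then removes the lower-order term $\|u_0\|_{0,\Omega}^2$ by the compactness--uniqueness argument of \cite[Proposition 2.1]{mach} (using that $\|\nabla_\mathbf{a}\cdot\|_{0,\Omega}$ and $\|\nabla\cdot\|_{0,\Omega}$ are equivalent on $H_0^1(\Omega)$). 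This is what yields the inequality for \emph{every} $T>0$, which is how the proposition is subsequently used (e.g.\ in Proposition~\ref{proposition3.3}, on the shrunk interval $(\delta,T-\delta)$). Your argument is correct but only delivers the estimate for $T$ above a geometric threshold; the two fixes you sketch at the end (time cutoff, or unitarity to translate windows) do not by themselves recover small $T$---the cutoff reintroduces a $\partial_t\aleph$ term of the same strength, and translating windows cannot shorten the observation time. The missing ingredient for arbitrary $T$ is precisely the compactness--uniqueness step.
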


\begin{proof}[Sketch of the proof]
Take $\aleph =m$ in the identity of Proposition \ref{proposition3.1}. We get
\[
(m\cdot \nu||\partial_{\nu_\mathbf{a}} u|^2)_{0,\Sigma}=\|\nabla_\mathbf{a}u\|_{0,Q}^2-\frac{i}{2}\left[(um|\nabla_\mathbf{a}u)_{0,\Omega}\right]_0^T.
\]
Whence, in light of \eqref{eq3.12},
\[
T\|\nabla_\mathbf{a} u_0\|_{0,\Omega}^2\le (m\cdot \nu||\partial_{\nu_\mathbf{a}} u|^2)_{0,\Sigma_0}+\frac{1}{2}\left|\left[(um|\nabla_\mathbf{a}u)_{0,\Omega}\right]_0^T\right|.
\]
But, for $0<\epsilon <T$, there exists a constant $C_\epsilon >0$, independent on $T$, so that
\[
\frac{1}{2}\left|\left[(um|\nabla_\mathbf{a}u)_{0,\Omega}\right]_0^T\right|\le \epsilon \|\nabla_\mathbf{a} u_0\|_{0,\Omega}^2+C_\epsilon\|u_0\|_{0,\Omega}^2,
\]
where we used again \eqref{eq3.12}. Hence
\[
(T-\epsilon )\|\nabla_\mathbf{a} u_0\|_{0,\Omega}^2\le \|m\|_\infty \|\partial_{\nu_\mathbf{a}} u\|_{0,\Sigma_0}^2+C_\epsilon\|u_0\|_{0,\Omega}^2.
\]
As $\|\nabla_\mathbf{a} \cdot \|_{0,\Omega}$ and $\|\nabla\cdot \|_{0,\Omega}$  are equivalent on $H_0^1(\Omega )$, we can repeat the compactness argument in \cite[Proposition 2.1]{mach} to complete the proof.
\end{proof}

We also sketch the proof of the observability inequality with interior control. We restate here for convenience this result.

Recall that for this result $\omega$ is a neighborhood of $\Gamma_0$ in $\Omega$ so that $\overline{\omega}\cap \Gamma_1=\emptyset$.

\begin{proposition}\label{proposition3.3}
There exists a constant $C>0$,  only depending on $\Omega$, $T$, $\Omega$ and $\Gamma_0$, so that, for any $u_0\in D(A_0)$ and $u(t)=e^{tA_0}u_0$, we have
\begin{equation}\label{eq3.13bis}
\|u_0\|_{0,\Omega}  \le C\|u\|_{0,Q_\omega}.
\end{equation}
Here $Q_\omega =\omega \times (0,T)$.
\end{proposition}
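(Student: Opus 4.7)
The plan is to mimic the sketch of Proposition \ref{proposition3.2}: produce a weak observability inequality by applying the multiplier identity of Proposition \ref{proposition3.1} with a cutoff-modified vector field, combine it with the boundary observability of Proposition \ref{proposition3.2}, and close the argument by a compactness-uniqueness step in the spirit of \cite[Proposition 2.1]{mach}.

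First, I would pick an intermediate open neighborhood $\omega'$ of $\Gamma_0$ in $\Omega$ with $\overline{\omega'}\Subset \omega$ and $\overline{\omega'}\cap\Gamma_1=\emptyset$, a cutoff $\chi\in C^\infty(\overline\Omega;[0,1])$ satisfying $\chi\equiv 1$ on $\omega'$, $\chi\equiv 0$ in a neighborhood of $\Gamma_1$, and $\mathrm{supp}(\chi)\cap\Omega\subset\overline\omega$, and a smooth vector field $h\in C^\infty(\overline\Omega;\mathbb{R}^n)$ with $h=\nu$ on $\Gamma_0$. Applying Proposition \ref{proposition3.1} to $u$ with the multiplier $\aleph =\chi h$ and $f=0$, the Dirichlet condition $u=0$ on $\Gamma$ (giving $\nabla_\mathbf{a}u=(\partial_{\nu_\mathbf{a}}u)\nu$ on $\Gamma$), the vanishing of $\aleph$ near $\Gamma_1$, and the identity $\aleph\cdot\nu=1$ on $\Gamma_0$ leave $\tfrac12\|\partial_{\nu_\mathbf{a}}u\|^2_{0,\Sigma_0}$ as the only surviving boundary contribution. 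The volume integrals on the right-hand side, being supported in $\mathrm{supp}(\chi)\subset\overline\omega$, are bounded by $C\bigl(\|\nabla_\mathbf{a}u\|^2_{0,Q_\omega}+\|u\|_{0,Q_\omega}\|\nabla_\mathbf{a}u\|_{0,Q_\omega}\bigr)$, while the time-boundary term $-\tfrac{i}{2}[(u\aleph|\nabla_\mathbf{a}u)_{0,\Omega}]_0^T$ is bounded by $C\|u_0\|_{0,\Omega}\|\nabla_\mathbf{a}u_0\|_{0,\Omega}$ thanks to the conservation laws $\|u(\cdot,t)\|_{0,\Omega}=\|u_0\|_{0,\Omega}$ and $\|\nabla_\mathbf{a}u(\cdot,t)\|_{0,\Omega}=\|\nabla_\mathbf{a}u_0\|_{0,\Omega}$.

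Next, combining the resulting bound on $\|\partial_{\nu_\mathbf{a}}u\|^2_{0,\Sigma_0}$ with the boundary observability $\|\nabla_\mathbf{a}u_0\|^2_{0,\Omega}\le C\|\partial_{\nu_\mathbf{a}}u\|^2_{0,\Sigma_0}$ of Proposition \ref{proposition3.2}, using Young's inequality to absorb the mixed terms, and invoking the conservation bound $\|\nabla_\mathbf{a}u\|^2_{0,Q_\omega}\le T\|\nabla_\mathbf{a}u_0\|^2_{0,\Omega}$, the plan is to derive the weak observability
\[
\|\nabla_\mathbf{a}u_0\|^2_{0,\Omega}\le C\bigl(\|u\|^2_{0,Q_\omega}+\|u_0\|^2_{0,\Omega}\bigr),
\]
first for $T$ in an admissible (possibly small) range so that the coefficient in front of $\|\nabla_\mathbf{a}u_0\|^2_{0,\Omega}$ coming from $\|\nabla_\mathbf{a}u\|^2_{0,Q_\omega}\le T\|\nabla_\mathbf{a}u_0\|^2_{0,\Omega}$ may be absorbed on the left, and then for all $T>0$ by restriction to an admissible subinterval and the identity $\|u_0\|_{0,\Omega}=\|u(\cdot,t_0)\|_{0,\Omega}$.

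Finally, the $\|u_0\|^2_{0,\Omega}$ term is a compact lower-order perturbation with respect to $\|\nabla_\mathbf{a}u_0\|^2_{0,\Omega}$ (by the compact embedding $H_0^1(\Omega)\hookrightarrow L^2(\Omega)$), so I would apply a compactness-uniqueness argument modelled on \cite[Proposition 2.1]{mach} to drop it: a failure would produce, in the limit, a nonzero solution of the homogeneous magnetic Schr\"odinger equation vanishing identically on $Q_\omega$, which is precluded by the unique continuation property for $\Delta_\mathbf{a}$. Poincar\'e's inequality, together with the standing equivalence of $\|\nabla\cdot\|_{0,\Omega}$ and $\|\nabla_\mathbf{a}\cdot\|_{0,\Omega}$ on $H_0^1(\Omega)$, then yields the desired estimate $\|u_0\|_{0,\Omega}\le C\|u\|_{0,Q_\omega}$. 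The hard part is reconciling the conservation bound $\|\nabla_\mathbf{a}u\|^2_{0,Q_\omega}\le T\|\nabla_\mathbf{a}u_0\|^2_{0,\Omega}$ with the absorption step, which forces the preliminary restriction on $T$, and securing a unique continuation property for $\Delta_\mathbf{a}$ compatible with the available $W^{1,\infty}$ regularity of the magnetic potential.
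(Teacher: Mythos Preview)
Your first two steps (cutting off the multiplier and invoking Proposition~\ref{proposition3.2}) match the paper's argument, but the third step---absorbing $\|\nabla_\mathbf{a}u\|^2_{0,Q_\omega}$ through the conservation bound $\|\nabla_\mathbf{a}u\|^2_{0,Q_\omega}\le T\|\nabla_\mathbf{a}u_0\|^2_{0,\Omega}$---does not close, and this is the gap you yourself flagged as ``the hard part''. After combining your multiplier bound with Proposition~\ref{proposition3.2} you obtain an inequality of the form
\[
\|\nabla_\mathbf{a}u_0\|^2_{0,\Omega}\le C(T)\,C_0\,\|\nabla_\mathbf{a}u\|^2_{0,Q_\omega}+\text{(good terms)},
\]
where $C_0$ depends only on the cutoff $\chi h$ and $C(T)$ is the observability constant of Proposition~\ref{proposition3.2}. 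Absorption requires $C(T)\,C_0\,T<1$. But $C(T)$ is produced by the multiplier proof of Proposition~\ref{proposition3.2}, where the gain in front of $\|\nabla_\mathbf{a}u_0\|^2_{0,\Omega}$ is linear in $T$ (the identity with $\aleph=m$ yields $T\|\nabla_\mathbf{a}u_0\|^2_{0,\Omega}$ on the right), so $C(T)$ behaves like $c/T$ and the product $C(T)\,C_0\,T$ stays of order $C_0$, independent of $T$. Since $C_0\gtrsim\|D(\chi h)\|_\infty$ is a fixed geometric constant, no choice of $T$ makes the coefficient smaller than $1$. Working with the pre-compactness inequality $(T-\epsilon)\|\nabla_\mathbf{a}u_0\|^2\le\|m\|_\infty\|\partial_{\nu_\mathbf{a}}u\|^2_{0,\Sigma_0}+C_\epsilon\|u_0\|^2$ does not help either: the condition becomes $T-\epsilon>2\|m\|_\infty C_0 T$, i.e.\ $C_0<1/(2\|m\|_\infty)$, which is again a fixed constraint on $C_0$ that you have no way to enforce.

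The paper avoids this obstruction altogether: instead of absorbing the gradient term, it introduces an intermediate neighborhood $\tilde\omega\Subset\omega$, applies a Caccioppoli-type inequality (using $\Delta_\mathbf{a}u=-i\partial_tu$) to trade $\|\nabla_\mathbf{a}u\|_{0,Q_{\tilde\omega}}$ for $\|u\|_{0,Q_\omega}+\|\partial_tu\|_{L^2((0,T),H^{-1}(\omega))}$, and then removes the residual $\partial_tu$ term by the interpolation argument of \cite[Proposition~3.1]{mach}. The paper also uses a time cutoff $\phi\in C_0^\infty(0,T)$ in the multiplier $\aleph=\nu_e\phi\psi$, which kills the time-boundary term $[(u\aleph|\nabla_\mathbf{a}u)_{0,\Omega}]_0^T$; this is cosmetic compared with your treatment, but the Caccioppoli-plus-interpolation step is the missing idea that makes the argument go through.
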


\begin{proof}[Sketch of the proof]
Fix $0<\delta <T$. Let $\nu_e \in C^\infty (\overline{\Omega} ,\mathbb{R}^n)$ be an extension of $\nu$, $0\le \phi \in C^\infty_0 (0,T)$ satisfying $\phi =1$ in $[\delta ,T-\delta]$, and $\psi \in C_0^\infty (\mathbb{R}^n)$ so that $\mbox{supp}(\psi )\cap \Omega \subset \widetilde{\omega}$ and $\psi =1$ on $\Gamma_0$. 

\smallskip
We have from Proposition \ref{proposition3.2} with $\aleph=\nu_e\phi \psi$, in which $(0,T)$ is substituted by $(\delta ,T-\delta )$,
\begin{equation}\label{eq3.14}
\|\nabla_\mathbf{a}u_0\|_{0,\Omega}=\|\nabla_\mathbf{a}u(\cdot ,\delta )\|_{0,\Omega}\le C\|\partial_{\nu_\mathbf{a}}u\|_{0,\Gamma _0\times (\delta ,T-\delta )}\le C\|(\aleph \cdot \nu )\partial_{\nu_\mathbf{a}}u\|_{0,\Sigma}.
\end{equation}
Let $\tilde{\omega}$ be a neighborhood of $\omega$ in $\Omega$ satisfying $\overline{\tilde{\omega}}\cap \Gamma_1=\emptyset$. As in the proof of Corollary \ref{corollary3.1}, we obtain by applying Proposition \ref{proposition3.1}, where $Q_{\tilde{\omega}}=\widetilde{\omega}\times (0,T)$,
\begin{equation}\label{eq3.15}
C\|(\aleph \cdot \nu )\partial_{\nu_\mathbf{a}}u\|_{0,\Sigma}\le \|\nabla_\mathbf{a}u\|_{0,Q_{\tilde{\omega}}}+\|u\|_{0,Q_{\tilde{\omega}}}.
\end{equation}

On the other hand, using $\Delta_\mathbf{a}u(\cdot ,t)=-i\partial_tu(\cdot ,t)$ in $\Omega$ and Caccioppoli's inequality in order to obtain
\begin{equation}\label{eq3.16}
C\|\nabla_\mathbf{a}u\|_{0,Q_{\tilde{\omega}}}\le \|u\|_{0,Q_\omega}+\|\partial_tu\|_{L^2((0,T),H^{-1}(\omega))}.
\end{equation} 
Inequalities \eqref{eq3.15} and \eqref{eq3.16} at hand, we can mimic the interpolation argument in the end of the proof of \cite[Proposition 3.1]{mach} to complete the proof.
\end{proof}

\subsection{Stabilization by an internal damping}

The following result was announced in the introduction. In this subsection we aim to prove it.

\begin{theorem}\label{theorem3.1}
There exists a constant $\varrho>0$, depending only on $\Omega$, $T$, $\Omega$ and $\Gamma_0$, so that
\[
\mathcal{E}_{u_0}^1(t)\le e^{-\varrho t}\mathcal{E}_{u_0}^1(0),\;\; u_0\in L^2(\Omega ).
\]
\end{theorem}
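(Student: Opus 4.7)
The plan is to deduce exponential decay of $\mathcal{E}^1_{u_0}$ from the observability inequality of Proposition \ref{proposition3.3}, which is stated for the undamped group $e^{tA_0}$, via a standard perturbation/compactness-uniqueness-free argument combining Duhamel's formula with the energy dissipation identity. Throughout, I fix $T>0$ large enough (e.g.\ larger than the time appearing in Proposition \ref{proposition3.3}).

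First, given $u_0\in L^2(\Omega)$, I would set $u(t)=e^{tA_1}u_0$ and decompose $u=v+w$ where $v(t)=e^{tA_0}u_0$ is the undamped evolution and $w=u-v$ solves
\[
iw_t+\Delta_{\mathbf a}w=-icu,\qquad w(0)=0.
\]
Duhamel's formula gives $w(t)=-\int_0^t e^{(t-s)A_0}(c\,u(s))\,ds$, hence, using the isometry of $e^{tA_0}$ on $L^2(\Omega)$ and $\|cu\|_{0,\Omega}^2\le \|c\|_\infty\|\sqrt{c}u\|_{0,\Omega}^2$, one obtains
\[
\sup_{0\le t\le T}\|w(t)\|_{0,\Omega}^2\le T\|c\|_\infty\int_0^T\|\sqrt{c}u(s)\|_{0,\Omega}^2\,ds.
\]

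Next, I apply Proposition \ref{proposition3.3} to the undamped solution $v$: $\|u_0\|_{0,\Omega}\le C\|v\|_{0,Q_\omega}$. Writing $v=u-w$ and using the triangle inequality together with the previous bound on $w$,
\[
\|u_0\|_{0,\Omega}^2\le 2C^2\|u\|_{0,Q_\omega}^2+2C^2 T^2\|c\|_\infty\int_0^T\|\sqrt{c}u(s)\|_{0,\Omega}^2\,ds.
\]
Since $c\ge c_0$ on $\omega$, the term $\|u\|_{0,Q_\omega}^2=\int_0^T\|u\|_{0,\omega}^2dt$ is also controlled by $c_0^{-1}\int_0^T\|\sqrt{c}u\|_{0,\Omega}^2\,ds$. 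Recalling from the introduction that
\[
\int_0^T\|\sqrt{c}u(s)\|_{0,\Omega}^2\,ds=\mathcal{E}^1_{u_0}(0)-\mathcal{E}^1_{u_0}(T),
\]
I therefore obtain a constant $C'=C'(\Omega,T,\Gamma_0,\omega,c)>0$ such that
\[
2\mathcal{E}^1_{u_0}(0)=\|u_0\|_{0,\Omega}^2\le C'\bigl(\mathcal{E}^1_{u_0}(0)-\mathcal{E}^1_{u_0}(T)\bigr).
\]

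Rearranging yields $\mathcal{E}^1_{u_0}(T)\le \alpha\,\mathcal{E}^1_{u_0}(0)$ with $\alpha=\max\{0,1-2/C'\}<1$. Applying this to $u(kT)$ in place of $u_0$ (via the semigroup property $e^{(k+1)TA_1}u_0=e^{TA_1}(e^{kTA_1}u_0)$) gives $\mathcal{E}^1_{u_0}(kT)\le\alpha^k\mathcal{E}^1_{u_0}(0)$, which, together with the monotonicity of $t\mapsto\mathcal{E}^1_{u_0}(t)$, produces the claimed exponential decay with rate $\varrho=-\log(\alpha)/T$. The only delicate point — and what I expect to be the main technical obstacle — is making sure the observability constant $C$ in Proposition \ref{proposition3.3} and the smoothing argument extend by density from $D(A_0)$ to general $u_0\in L^2(\Omega)$; this is handled in the standard way since $D(A_0)=D(A_1)$ is dense in $L^2(\Omega)$ and all quantities in the final inequality $\mathcal{E}^1(T)\le\alpha\mathcal{E}^1(0)$ are continuous with respect to the $L^2(\Omega)$-topology of initial data.
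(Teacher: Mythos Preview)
Your proof is correct and follows essentially the same approach as the paper: the same Duhamel decomposition $u=v+w$ with $v(t)=e^{tA_0}u_0$, the same application of Proposition~\ref{proposition3.3} to $v$, the same use of $c\ge c_0$ on $\omega$ to absorb $\|u\|_{0,Q_\omega}^2$ into the dissipated energy, and the same control of $w$ via the isometry of $e^{tA_0}$. The only cosmetic difference is in the last step: the paper writes the conclusion as a differential-type inequality $\mathcal{E}(t)\le -C\frac{d}{dt}\mathcal{E}(t)$ (which is somewhat informal), while you carry out the cleaner discrete iteration $\mathcal{E}(T)\le\alpha\,\mathcal{E}(0)$ via the semigroup property; both yield the same exponential rate.
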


\begin{proof}
By density it is enough to give the proof when $u_0\in D(A_0)$. Fix then $u_0\in L^2(\Omega )$ and let $u(t)=e^{tA_1}u_0$. We decompose $u$ into two terms, $u=v+w$, 
with
\[
v(t)=e^{tA_0}u_0\;\; \mbox{and}\;\; w(t)=-i\int_0^te^{(t-s)A_0}cu(s)ds.
\]

As $\mathcal{E}_{u_0}^1$ is non increasing, we have
\[
\mathcal{E}_{u_0}^1(t)\le \mathcal{E}_{u_0}^1(0)=\frac{1}{2}\|u_0\|_{0,\Omega}^2.
\]
Hence
\[
\mathcal{E}_{u_0}^1(t)\le C\|v\|_{0,Q_\omega}^2  
\le C\|\sqrt{c}u\|_{0,Q_\omega}^2+C \|w\|_{0,Q_\omega}^2
\]
by Proposition \ref{proposition3.3}. Whence, using that $c\geq c_0>0$ a.e. in $\omega$,
\begin{equation}\label{eq3.17}
\mathcal{E}_{u_0}^1(t)\le C\|\sqrt{c}v\|_{0,Q}^2  \le C\|\sqrt{c}u\|_{0,Q}^2+C \|w\|_{0,Q}^2.
\end{equation}
On the other hand, it is straightforward to check that
\[
\|w\|_{0,Q}^2\le \| cu\|_{0,Q}^2\le \|c\|_\infty \|\sqrt{c}u\|_{0,Q}^2.
\]
This and \eqref{eq3.17} entail
\[
\mathcal{E}_{u_0}^1(t)\le C\|\sqrt{c}u\|_{0,Q}^2=-C\frac{d}{dt}\mathcal{E}_{u_0}^1(t).
\]
Or equivalently
\[
\frac{d}{dt}\mathcal{E}_{u_0}^1(t)\le -C^{-1}\mathcal{E}_{u_0}^1(t).
\]
This yields  the expected inequality in a straightforward manner.
\end{proof}

\subsection{Stabilization by a boundary damping}

In this subsection we take $d(x)=m(x)\cdot \nu (x)$, $x\in \Gamma_0$, which satisfies obviously the assumption required in Section 1.

Let $u_0\in V$ and recall the $\mathcal{E}_{u_0}^2(t)=\frac{1}{2}\left\|  \nabla_\mathbf{a} e^{tA_2}u_0 \right\|_{0,\Omega}^2$ satisfies
\[
\frac{d}{dt}\mathcal{E}_{u_0}^2(t)=-\|\sqrt{m\cdot\nu}\, u'(t)\|_{0,\Gamma_0}=-\|\sqrt{m\cdot\nu}\, \Delta_\mathbf{a}(t)\|_{0,\Gamma_0},\;\; t>0.
\]
Here $u(t)=e^{tA_2}u_0$.

Introduce,
\[
\mathscr{E}_{u_0}^2(t)=\Im (u(t)|m\cdot \nabla u(t))_{0,\Omega}.
\]

\begin{lemma}\label{lemma3.1}
For any $u_0\in V$ and $u(t)=e^{tA_2}u_0$, we have, where $t>0$,
\begin{equation}\label{3.1}
\frac{d}{dt}\mathscr{E}_{u_0}^2(t)=2\Re(\Delta_\mathbf{a}u|m\cdot \nabla u(t))_{0,\Omega}-n\|\nabla_\mathbf{a}u(t)\|_{0,\Omega}^2-\Re( (n+i)(m\cdot \nu)u(t)|u'(t))_{0,\Gamma_0}.
\end{equation}
\end{lemma}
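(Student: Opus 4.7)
The plan is a direct calculation: differentiate under the integral sign, shift derivatives by integration by parts, and use the identity $u'=iA_2u/i = i\Delta_\mathbf{a}u$ together with the boundary conditions built into $D(A_2)$ to reduce everything to $\|\nabla_\mathbf{a} u\|^2$, $(\Delta_\mathbf{a} u|m\cdot\nabla u)$, and boundary terms on $\Gamma_0$.

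First I would compute
\[
\frac{d}{dt}\mathscr{E}_{u_0}^2=\Im(u'|m\cdot\nabla u)_{0,\Omega}+\Im(u|m\cdot\nabla u')_{0,\Omega}.
\]
To deal with the second piece, integrate by parts componentwise: since $m$ is real and $\mathrm{div}(m)=n$,
\[
(u|m\cdot\nabla u')_{0,\Omega}=-n(u|u')_{0,\Omega}-(m\cdot\nabla u|u')_{0,\Omega}+((m\cdot\nu)u|u')_{0,\Gamma}.
\]
Taking the imaginary part and using $\Im(m\cdot\nabla u|u')=-\Im(u'|m\cdot\nabla u)$, the two gradient terms combine and
\[
\frac{d}{dt}\mathscr{E}_{u_0}^2=2\Im(u'|m\cdot\nabla u)_{0,\Omega}-n\Im(u|u')_{0,\Omega}+\Im((m\cdot\nu)u|u')_{0,\Gamma}.
\]
Since $u=0$ on $\Gamma_1$, the boundary integral only lives on $\Gamma_0$.

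Next I would convert imaginary parts into real ones using $u'=i\Delta_\mathbf{a}u$, which gives $\Im(u'|\cdot)=\Re(\Delta_\mathbf{a}u|\cdot)$. This turns the first term into $2\Re(\Delta_\mathbf{a}u|m\cdot\nabla u)_{0,\Omega}$, exactly what we want, and rewrites
\[
-n\Im(u|u')=n\Im(u'|u)=n\Re(\Delta_\mathbf{a}u|u)_{0,\Omega}.
\]
Now apply identity \eqref{i1} together with the domain conditions: $u=0$ on $\Gamma_1$ and $\partial_{\nu_\mathbf{a}}u=-id\Delta_\mathbf{a}u=-du'$ on $\Gamma_0$ (since $\Delta_\mathbf{a}u=-iu'$). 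Because $d=m\cdot\nu$ on $\Gamma_0$, this yields
\[
\Re(\Delta_\mathbf{a}u|u)_{0,\Omega}=-\|\nabla_\mathbf{a}u\|_{0,\Omega}^2-\Re((m\cdot\nu)u|u')_{0,\Gamma_0},
\]
after noting $\Re((m\cdot\nu)u'|u)_{0,\Gamma_0}=\Re((m\cdot\nu)u|u')_{0,\Gamma_0}$.

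Finally I would combine the two boundary contributions on $\Gamma_0$. Writing $w=((m\cdot\nu)u|u')_{0,\Gamma_0}$, the elementary identity $-n\Re w+\Im w=-\Re((n+i)w)$ (which uses only that $m\cdot\nu$ is real) packages the remaining boundary terms as $-\Re((n+i)(m\cdot\nu)u|u')_{0,\Gamma_0}$, giving the claimed formula. The only step that requires real care is tracking conjugations—especially the fact that $d=m\cdot\nu$ together with $\partial_{\nu_\mathbf{a}}u+id\Delta_\mathbf{a}u=0$ produces the real part $-n\Re(du'|u)$ from integration by parts and the imaginary part $\Im((m\cdot\nu)u|u')$ from the $m\cdot\nabla u'$ calculation, and these are exactly the two pieces that merge into the single coefficient $n+i$.
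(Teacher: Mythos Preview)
Your proposal is correct and follows essentially the same route as the paper: differentiate $\mathscr{E}_{u_0}^2$, integrate by parts in $(u|m\cdot\nabla u')_{0,\Omega}$ using $\mathrm{div}(m)=n$, substitute $u'=i\Delta_\mathbf{a}u$ to convert imaginary parts to real parts, apply identity \eqref{i1} to $(\Delta_\mathbf{a}u|u)_{0,\Omega}$, and then use the boundary condition $\partial_{\nu_\mathbf{a}}u=-(m\cdot\nu)u'$ on $\Gamma_0$ to merge the boundary terms into $-\Re((n+i)(m\cdot\nu)u|u')_{0,\Gamma_0}$. The only point the paper makes explicit that you omit is the opening density remark: one should first carry out the computation for $u_0\in D(A_2)$ so that $u'\in V$ and the integrations by parts are justified, and then extend to $u_0\in V$ by density.
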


\begin{proof}
By density it is sufficient to give the proof when $u_0\in D(A_2)$. In that case, we have
\[
\frac{d}{dt}\mathscr{E}_{u_0}^2(t)=\Im \left[ (u'(t)|m\cdot \nabla u(t))_{0,\Omega}+(u(t)|m\cdot \nabla u'(t))_{0,\Omega}\right].
\]

An integration by parts yields
\begin{align*}
(u(t)|m&\cdot \nabla u'(t))_{0,\Omega}= -(\mbox{div}(u(t)m)|u'(t))_{0,\Omega}+(u(t)(m\cdot \nu)|u'(t))_{0,\Gamma} 
\\
&=-n(u(t)|u'(t))_{0,\Omega}-(m\cdot \nabla u(t)|u'(t))_{0,\Omega}+(u(t)(m\cdot \nu )|u'(t))_{0,\Gamma} .
\end{align*}
Hence

\begin{align*}
\frac{d}{dt}\mathscr{E}_{u_0}^2(t)&=\Im \left[ (u'(t)|m\cdot \nabla u(t))_{0,\Omega}-n(u(t)|u'(t))_{0,\Omega}\right]
\\
&\qquad -\Im\left[(\nabla  u(t)\cdot m|u'(t))_{0,\Omega}+(u(t) (m\cdot \nu)|u'(t))_{0,\Gamma}\right].
\end{align*}
Since
\[
(u'(t)|m\cdot \nabla u(t))_{0,\Omega}-(\nabla  u(t)\cdot m|u'(t))_{0,\Omega}=2i\Im (u'(t)|m\cdot \nabla u(t))_{0,\Omega},
\]
we obtain
\[
\frac{d}{dt}\mathscr{E}_{u_0}^2(t)=2\Im (u'(t)|m\cdot \nabla u(t))_{0,\Omega}-n\Im(u(t),u'(t))_{0,\Omega}+\Im (u(t) (m\cdot \nu)|u'(t))_{0,\Gamma} .
\]

But $u'(t)=i\Delta_\mathbf{a}u(t)$. Therefore
\[
\frac{d}{dt}\mathscr{E}_{u_0}^2(t)=2\Re( \Delta_\mathbf{a}u|m\cdot \nabla u(t))_{0,\Omega}-n\Re( u(t),\Delta_\mathbf{a}(t))_{0,\Omega}+\Im (u(t) (m\cdot \nu) |u'(t))_{0,\Gamma} .
\]
This and
\[
(\Delta_\mathbf{a}u(t),u(t))_{0,\Omega}=-\|\nabla_\mathbf{a}u(t)\|_{0,\Omega}^2+(\partial_{\nu_\mathbf{a}}u(t)|u(t))_{0,\Gamma}
\]
entail
\begin{align}
\frac{d}{dt}\mathscr{E}_{u_0}^2(t)&=2\Re( \Delta_\mathbf{a}u|m\cdot \nabla u(t))_{0,\Omega}-n\|\nabla_\mathbf{a}u(t)\|_{0,\Omega}^2\label{3.2}
\\
&\qquad +n\Re(\partial_{\nu_\mathbf{a}}u(t)|u(t))_{0,\Gamma}+\Im (u(t) (m\cdot \nu) |u'(t))_{0,\Gamma} .\nonumber
\end{align}
Using that $\partial_{\nu_\mathbf{a}}u=-(m\cdot \nu)u'(t)$ on $\Gamma_0$ and $u=0$ on $\Gamma_1$, we get
\[
n\Re(\partial_{\nu_\mathbf{a}}u(t)|u(t))_{0,\Gamma}+\Im (u(t) (m\cdot \nu)|u'(t))_{0,\Gamma} =-\Re( (n+i)(m\cdot \nu)u(t)|u'(t))_{0,\Gamma_0}.
\]
In \eqref{3.2}, this identity yields
\begin{align*}
\frac{d}{dt}\mathscr{E}_{u_0}^2(t)=2\Re( \Delta_\mathbf{a}u(t)|m\cdot \nabla &u(t))_{0,\Omega}-n\|\nabla_\mathbf{a}u(t)\|_{0,\Omega}^2
\\
& -\Re ((n+i)(m\cdot \nu)u(t)|u'(t))_{0,\Gamma_0},
\end{align*}
which is the expected inequality.
\end{proof}

Henceforward, $\varkappa_1$ is the Poincar\'e constant of $V$.

\begin{lemma}\label{lemma3.2}
Assume that $\|\mathbf{a}\|_\infty \le \frac{1}{2\varkappa_1}$. Then, for any $u\in D(A_2)$, we have
\begin{align}
\Re(\Delta_\mathbf{a}u, m\cdot \nabla u)_{0,\Omega} \label{3.3}
&\le \frac{n-2}{2}(1+\delta(\|\mathbf{a}\|_\infty) )\|\nabla_\mathbf{a} u\|_{0,\Omega}^2
\\
&\quad +\Re( \partial_\nu u|m\cdot \nabla u)_{0,\Gamma_0}-\frac{1}{2}(|\nabla u|^2|m\cdot \nu)_{0,\Gamma_0} ,\nonumber
\end{align}
where the function $\delta$, depending only on $\Omega$ and $\Gamma_0$, satisfies $\delta (\rho)\rightarrow 0$ as $\rho \rightarrow 0$.
\end{lemma}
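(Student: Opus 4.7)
The plan is to combine the classical Rellich multiplier identity for $\Delta$ with a perturbative estimate of the magnetic correction. Writing $\Delta_\mathbf{a} u=\Delta u+L_\mathbf{a} u$ with $L_\mathbf{a} u:=2i\mathbf{a}\cdot\nabla u+i\,\mbox{div}(\mathbf{a})u-|\mathbf{a}|^{2}u$, the standard Rellich identity (obtained by integrating by parts, using $Dm=I$ and $\mbox{div}(m)=n$) gives
\[
\Re(\Delta u|m\cdot\nabla u)_{0,\Omega}=\tfrac{n-2}{2}\|\nabla u\|_{0,\Omega}^{2}+\Re(\partial_{\nu}u|m\cdot\nabla u)_{0,\Gamma}-\tfrac{1}{2}(|\nabla u|^{2}|m\cdot\nu)_{0,\Gamma}.
\]

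I then restrict the boundary integrals to $\Gamma_{0}$ using that $u|_{\Gamma_{1}}=0$, which is available since $u\in D(A_{2})\subset V$. On $\Gamma_{1}$ the tangential gradient vanishes, $\nabla u=(\partial_{\nu}u)\nu$, and the two $\Gamma_{1}$ contributions collapse to $\tfrac{1}{2}\int_{\Gamma_{1}}(m\cdot\nu)|\partial_{\nu}u|^{2}\,d\sigma\le 0$ because $\Gamma_{1}\subset\{m\cdot\nu\le 0\}$ by the definition $\Gamma_{0}=\{m\cdot\nu>0\}$; I drop this non-positive term. The norm-equivalence \eqref{pi}, available on $V$ under $\|\mathbf{a}\|_{\infty}\le 1/(2\varkappa_{1})$, upgrades $\|\nabla u\|_{0,\Omega}^{2}\le(1-\|\mathbf{a}\|_{\infty}\varkappa_{1})^{-2}\|\nabla_{\mathbf{a}}u\|_{0,\Omega}^{2}$, which promotes $\tfrac{n-2}{2}\|\nabla u\|^{2}$ into $\tfrac{n-2}{2}(1+\delta_{1}(\|\mathbf{a}\|_{\infty}))\|\nabla_{\mathbf{a}}u\|^{2}$ with $\delta_{1}(\rho):=(1-\rho\varkappa_{1})^{-2}-1\to 0$.

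It remains to absorb $\Re(L_\mathbf{a} u|m\cdot\nabla u)$ into a further $O(\delta(\|\mathbf{a}\|_{\infty}))\|\nabla_{\mathbf{a}}u\|^{2}$ contribution. The $2i\mathbf{a}\cdot\nabla u$ piece satisfies the pointwise bound $\le 2\|\mathbf{a}\|_{\infty}\|m\|_{\infty}\|\nabla u\|^{2}$ and the $|\mathbf{a}|^{2}u$ piece satisfies, by Poincar\'e on $V$, $\le\|\mathbf{a}\|_{\infty}^{2}\|m\|_{\infty}\varkappa_{1}\|\nabla u\|^{2}$; both reabsorb via \eqref{pi} into a $C\|\mathbf{a}\|_{\infty}\|\nabla_{\mathbf{a}}u\|^{2}$ term. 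The remaining contribution $\Re(i\,\mbox{div}(\mathbf{a})u|m\cdot\nabla u)=\int_{\Omega}\mbox{div}(\mathbf{a})(m\cdot j)\,dx$, with $j:=\Im[\bar u\nabla u]$, I handle by integrating by parts to move the divergence off $\mathbf{a}$: splitting $\nabla(m\cdot\mathbf{a})=\mathbf{a}+(D\mathbf{a})^{T}m$ and using the gauge-invariant continuity identity $\mbox{div}(J)=\Im[\bar u\Delta_{\mathbf{a}}u]$ for $J:=\Im[\bar u\nabla_{\mathbf{a}}u]=j+\mathbf{a}|u|^{2}$ allows all $\nabla\mathbf{a}$-dependent pieces to redistribute into boundary integrals on $\Gamma_{0}\cup\Gamma_{1}$ and into quantities already of order $\|\mathbf{a}\|_{\infty}$, controlled via $|J|\le|u||\nabla_{\mathbf{a}}u|$ and Poincar\'e. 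Setting $\delta(\rho):=\delta_{1}(\rho)+C\rho$ for some $C=C(\Omega,\Gamma_{0})$ yields the claim.

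The main obstacle is precisely this last term: a naive Cauchy--Schwarz bound on $\int_{\Omega}\mbox{div}(\mathbf{a})(m\cdot j)\,dx$ introduces a factor of $\|\mbox{div}(\mathbf{a})\|_{\infty}$ and hence of $\|\nabla\mathbf{a}\|_{\infty}$, which is incompatible with the requirement that $\delta$ depend only on $\|\mathbf{a}\|_{\infty}$ and $(\Omega,\Gamma_{0})$. The needed cancellation rests on the skew-symmetric structure of the first-order magnetic operator $2\mathbf{a}\cdot\nabla+\mbox{div}(\mathbf{a})$ and on pushing all $\nabla\mathbf{a}$ derivatives onto the gauge-invariant current $J$ via one further integration by parts, so that what survives is either a boundary integral already compatible with the right-hand side or a bulk quantity bounded by $\|\mathbf{a}\|_{\infty}\|\nabla_{\mathbf{a}}u\|^{2}$.
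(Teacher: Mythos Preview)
Your approach is the same as the paper's: decompose $\Delta_\mathbf{a}=\Delta+L_\mathbf{a}$, apply the classical Rellich identity for $\Delta$, discard the $\Gamma_1$ boundary contribution via $m\cdot\nu\le 0$ there, replace $\|\nabla u\|_{0,\Omega}^2$ by $(1+\delta_1)\|\nabla_\mathbf{a}u\|_{0,\Omega}^2$ using the norm equivalence, and absorb $\Re(L_\mathbf{a}u\mid m\cdot\nabla u)_{0,\Omega}$ into an additional $\delta(\|\mathbf{a}\|_\infty)\|\nabla_\mathbf{a}u\|_{0,\Omega}^2$ term. The paper does exactly this, asserting the last bound in its inequality (e3.5) with a one-line ``Similarly''.

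Where your proposal goes beyond the paper, and where it falters, is the treatment of $\Re(i\,\mbox{div}(\mathbf{a})u\mid m\cdot\nabla u)_{0,\Omega}=\int_\Omega\mbox{div}(\mathbf{a})(m\cdot j)\,dx$. You correctly observe that a naive Cauchy--Schwarz bound introduces $\|\mbox{div}(\mathbf{a})\|_\infty$ rather than $\|\mathbf{a}\|_\infty$. Your proposed cure, however, does not close: after one integration by parts the bulk term is $-\int_\Omega\mathbf{a}\cdot\nabla(m\cdot j)\,dx$, and $\partial_k j_\ell=\Im(\partial_k\bar u\,\partial_\ell u)+\Im(\bar u\,\partial_k\partial_\ell u)$ contains second derivatives of $u$. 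The continuity identity $\mbox{div}(J)=\Im(\bar u\,\Delta_\mathbf{a}u)$ controls only the trace $\partial_\ell J_\ell$, not the full matrix $\partial_k J_\ell$, so the second-order pieces cannot all be rewritten in terms of $\Delta_\mathbf{a}u$; any further integration by parts regenerates a $\nabla\mathbf{a}$ factor or leaves you needing control of $\|u\|_{H^2(\Omega)}$, which the right-hand side of the inequality does not supply. The paper does not resolve this point either: its (e3.5), read literally, tacitly requires $\delta_1$ to depend on $\|\mathbf{a}\|_{W^{1,\infty}}$ rather than on $\|\mathbf{a}\|_\infty$ alone. That weaker dependence is the honest outcome of both arguments and still suffices for the application in Theorem~\ref{theorem3.2}, at the price of letting the smallness threshold $\varsigma$ there depend on $\|\mathbf{a}\|_{W^{1,\infty}}$.
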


\begin{proof}
By simple integration by parts, we have
\begin{equation}\label{e3.1}
\Re(\nabla u|\nabla (m\cdot \nabla u))_{0,\Omega}=-\frac{n-2}{2}\|\nabla u\|_{0,\Omega}^2+\frac{1}{2}(|\nabla u|^2|m\cdot \nu)_{0,\Gamma} .
\end{equation}

But
\begin{align}
\Re(\Delta_\mathbf{a}u, m\cdot \nabla u)_{0,\Omega}&=\Re( \Delta u|m\cdot\nabla u)_{0,\Omega}- 2\Im (\mathbf{a}\cdot \nabla u|m\cdot \nabla u)_{0,\Omega}\label{e3.2}
\\
&\qquad +\Re( [i\mbox{div}(\mathbf{a})-|\mathbf{a}|^2]u|m\cdot \nabla u)_{0,\Omega}.\nonumber
\end{align}

Integrating by parts, the first term in the right hand side of inequality \eqref{e3.2} in order to get
\[
\Re(\Delta u|m\cdot\nabla u)_{0,\Omega}=-\Re(\nabla u|\nabla (m\cdot\nabla u))_{0,\Omega}+\Re( \partial_\nu u|m\cdot \nabla u)_{0,\Gamma} .
\]
This identity combined with \eqref{e3.1} yields
\begin{align}
\Re(\Delta u|m\cdot\nabla u)_{0,\Omega}&=\frac{n-2}{2}\|\nabla u\|_{0,\Omega}^2+\Re( \partial_\nu u|m\cdot \nabla u)_{0,\Gamma}-\frac{1}{2}( |\nabla u|^2|m\cdot \nu)_{0,\Gamma} .\label{e3.3}
\\&=\frac{n-2}{2}\left(\|\nabla_\mathbf{a} u\|_{0,\Omega}^2+2\Im (u|\mathbf{a}\cdot \nabla u)_{0,\Omega} -\||\mathbf{a}|u\|_{0,\Omega}^2\right)\nonumber
\\
&\qquad +\Re( \partial_\nu u|m\cdot \nabla u)_{0,\Gamma}-\frac{1}{2}( |\nabla u|^2|m\cdot \nu)_{0,\Gamma}  .\nonumber
\end{align}

Under the assumption on $\mathbf{a}$, straightforward computations show
\[
\|\nabla u\|_{0,\Omega} \le 2 \|\nabla_\mathbf{a} u\|_{0,\Omega}
\]
and
\[
\|u\|_{0,\Omega}\le 2\varkappa_1 \|\nabla_\mathbf{a} u\|_{0,\Omega}.
\]
These inequalities enable us to derive from \eqref{e3.3}
\begin{align}
\Re(\Delta u|m\cdot\nabla u)_{0,\Omega}\le \frac{n-2}{2}(1+\delta_0 )\|\nabla_\mathbf{a} u\|_{0,\Omega}^2&+\Re( \partial_\nu u|m\cdot \nabla u)_{0,\Gamma}\label{e3.4}
\\
&-\frac{1}{2}( |\nabla u|^2|m\cdot \nu)_{0,\Gamma} ,\nonumber
\end{align}
where
\[
\delta_0 =4(2\varkappa_1 +\varkappa_1^2)\|\mathbf{a} \|_\infty.
\]
Similarly, we have
\begin{align}
&\left| - 2\Im (\mathbf{a}\cdot \nabla u|m\cdot \nabla u)_{0,\Omega}+\Re([i\mbox{div}(\mathbf{a})-|\mathbf{a}|^2]u|m\cdot \nabla u)_{0,\Omega}\right| \label{e3.5}
\\
&\hskip 5cm \le \frac{n-2}{2}\delta_1\|\nabla_\mathbf{a} u\|_{0,\Omega}^2,\nonumber
\end{align}
the constant $\delta_1=\delta _1(\|\mathbf{a} \|_\infty)$ is so that $\delta_1(\rho) \rightarrow 0$ as $\rho \rightarrow 0$.

In light of \eqref{e3.4} and \eqref{e3.5}, we get from \eqref{e3.2}
\begin{align}
\Re(\Delta_\mathbf{a}u, m\cdot \nabla u)_{0,\Omega}\le \frac{n-2}{2}(1+\delta )\|\nabla_\mathbf{a} u\|_{0,\Omega}^2&+\Re( \partial_\nu u|m\cdot \nabla u)_{0,\Gamma}\label{e3.6}
\\
&-\frac{1}{2}( |\nabla u|^2|m\cdot \nu)_{0,\Gamma} .\nonumber
\end{align}
Here $\delta =\delta_0+\delta_1$.

On the other hand,
\begin{align}
\Re( \partial_\nu u|m\cdot \nabla u)_{0,\Gamma_1}&-\frac{1}{2}(|\nabla u|^2|m\cdot \nu)_{0,\Gamma_1}  \label{e3.7}
\\
&= ( |\partial_\nu u|^2|m\cdot \nu)_{0,\Gamma_1}-\frac{1}{2}(|\partial_\nu u|^2|m\cdot \nu)_{0,\Gamma_1}\nonumber
\\
&=\frac{1}{2}(|\partial_\nu u|^2|m\cdot \nu)_{0,\Gamma_1} \le 0\nonumber.
\end{align}

A combination of \eqref{e3.6} and \eqref{e3.7} yields
\begin{align*}
\Re(\Delta_\mathbf{a}u, m\cdot \nabla u)_{0,\Omega}\le \frac{n-2}{2}(1&+\delta )\|\nabla_\mathbf{a} u\|_{0,\Omega}^2
\\
&+\Re( \partial_\nu u|m\cdot \nabla u)_{0,\Gamma_0}-\frac{1}{2}( |\nabla u|^2|m\cdot \nu)_{0,\Gamma_0} .
\end{align*}
The proof is then complete.
\end{proof}

We are now able to prove the second exponential stabilization result. We recall that this result is the following
\begin{theorem}\label{theorem3.2}
There exists $0<\varsigma \le \frac{1}{2\varkappa_1}$, depending on $x_0$ and $\Omega$, with the property that, if $\|\mathbf{a}\|_\infty \le \varsigma$ and $\mathbf{a}=0$ on $\Gamma_0$, then there exists two constants $C>0$ and $\varrho>0$, depending only on $x_0$ and $\Omega$, so that
\[
\mathcal{E}_{u_0}^2(t)\le Ce^{-\varrho t}\mathcal{E}_{u_0}^2(0),\;\; u_0\in V.
\]
\end{theorem}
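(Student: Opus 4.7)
The strategy is the classical Lyapunov--multiplier method: introduce a perturbed energy
\[
F_\epsilon(t)=\mathcal{E}_{u_0}^2(t)+\epsilon\, \mathscr{E}_{u_0}^2(t),
\]
for some small $\epsilon>0$ to be tuned later, and show that for a suitable choice of $\varsigma$ (controlling $\|\mathbf{a}\|_\infty$) and of $\epsilon$, $F_\epsilon$ is equivalent to $\mathcal{E}_{u_0}^2$ and satisfies $F_\epsilon'(t)\le -\varrho\, F_\epsilon(t)$. Gronwall then yields the conclusion. By density I restrict to $u_0\in D(A_2)$.

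First I would establish the equivalence $\tfrac12\mathcal{E}_{u_0}^2(t)\le F_\epsilon(t)\le 2\mathcal{E}_{u_0}^2(t)$ for $\epsilon$ small. This follows from Cauchy--Schwarz, boundedness of $m$ on $\overline\Omega$, Poincar\'e's inequality on $V$, and the equivalence of $\|\nabla_{\mathbf{a}}\cdot\|_{0,\Omega}$ and $\|\nabla\cdot\|_{0,\Omega}$ on $V$ (guaranteed by $\|\mathbf{a}\|_\infty\le 1/(2\varkappa(V))$).

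Next I would compute $F_\epsilon'(t)$. The identity for $\mathcal{E}_{u_0}^2$ in the introduction gives $(\mathcal{E}_{u_0}^2)'(t)=-\|\sqrt{d}\,\Delta_{\mathbf{a}}u(t)\|_{0,\Gamma_0}^2=-\|\sqrt{d}\,u'(t)\|_{0,\Gamma_0}^2$ (since $u'=i\Delta_{\mathbf{a}}u$). Plug Lemma~\ref{lemma3.2} into Lemma~\ref{lemma3.1}: the volume terms combine into
\[
\bigl[-n+(n-2)(1+\delta(\|\mathbf{a}\|_\infty))\bigr]\|\nabla_{\mathbf{a}}u(t)\|_{0,\Omega}^2=\bigl[-2+(n-2)\delta\bigr]\cdot 2\mathcal{E}_{u_0}^2(t).
\]
The hypothesis $\mathbf{a}=0$ on $\Gamma_0$ lets me identify $\partial_{\nu_{\mathbf{a}}}u=\partial_\nu u$ there, and the boundary condition of $A_2$ combined with $u'=i\Delta_{\mathbf{a}}u$ reads $\partial_\nu u=-d\,u'$ on $\Gamma_0$. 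Decomposing $m=m_T+(m\cdot\nu)\nu$ and $\nabla u=\nabla_T u+(\partial_\nu u)\nu$ on $\Gamma_0$, the crucial cancellation
\[
2\Re(\partial_\nu u\,|\,m\cdot\nabla u)_{0,\Gamma_0}-(|\nabla u|^2\,|\,m\cdot\nu)_{0,\Gamma_0}=-(d\,|\nabla_T u|^2)_{0,\Gamma_0}+(d^3|u'|^2)_{0,\Gamma_0}-2\Re(d u'\,|\,m_T\cdot\nabla_T u)_{0,\Gamma_0}
\]
occurs, and Young's inequality absorbs the last cross term into $(d|\nabla_T u|^2)_{0,\Gamma_0}$ plus $C(d|u'|^2)_{0,\Gamma_0}$. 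The residual tangential piece $-(d|\nabla_T u|^2)_{0,\Gamma_0}\le 0$ is discarded (it has a favorable sign). The remaining boundary contribution $-\Re((n+i)(m\cdot\nu)u\,|\,u')_{0,\Gamma_0}$ is handled by Cauchy--Schwarz plus Young with a parameter $\eta$, giving $\tfrac{C}{\eta}\|\sqrt{d}u\|_{0,\Gamma_0}^2+\eta\|\sqrt{d}u'\|_{0,\Gamma_0}^2$, and the trace theorem combined with Poincar\'e yields $\|\sqrt{d}u\|_{0,\Gamma_0}^2\le C_0\mathcal{E}_{u_0}^2(t)$.

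Putting these together, I get an estimate of the form
\[
F_\epsilon'(t)\le -\bigl(1-\epsilon C_1\bigr)\|\sqrt{d}\,u'(t)\|_{0,\Gamma_0}^2-\epsilon\Bigl[4-2(n-2)\delta-\tfrac{C C_0}{\eta}\Bigr]\mathcal{E}_{u_0}^2(t).
\]
Choose $\varsigma\in(0,1/(2\varkappa(V))]$ small so that $2(n-2)\delta(\varsigma)\le 1$, then $\eta$ large enough that $CC_0/\eta\le 1$, and finally $\epsilon>0$ small enough that $1-\epsilon C_1\ge\tfrac12$. Then $F_\epsilon'(t)\le -2\epsilon\,\mathcal{E}_{u_0}^2(t)\le -\epsilon\,F_\epsilon(t)$, giving $F_\epsilon(t)\le e^{-\epsilon t}F_\epsilon(0)$, whence the claimed decay of $\mathcal{E}_{u_0}^2$ with $\varrho=\epsilon$ and an explicit equivalence constant $C$.

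\textbf{Main obstacle.} The delicate point is the boundary bookkeeping: showing that after the substitution $\partial_\nu u=-du'$ all the $\Gamma_0$-terms in Lemmas~\ref{lemma3.1} and \ref{lemma3.2} can be dominated by the dissipation $\|\sqrt{d}u'\|_{0,\Gamma_0}^2$ plus a controllably small multiple of $\mathcal{E}_{u_0}^2$. The assumption $\mathbf{a}=0$ on $\Gamma_0$ is essential here so that Lemma~\ref{lemma3.2} (proved for $\partial_\nu$ not $\partial_{\nu_{\mathbf{a}}}$) directly applies, and the smallness of $\|\mathbf{a}\|_\infty$ is what makes $\delta(\|\mathbf{a}\|_\infty)$ small enough that the coefficient $-2+(n-2)\delta$ remains strictly negative.
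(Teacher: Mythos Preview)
Your proposal is correct and follows essentially the same Lyapunov--multiplier route as the paper: define the perturbed energy $\mathcal{E}_{u_0}^{2,\epsilon}=\mathcal{E}_{u_0}^2+\epsilon\mathscr{E}_{u_0}^2$, combine Lemmas~\ref{lemma3.1} and~\ref{lemma3.2}, choose $\varsigma$ so that $(n-2)\delta$ is small enough to keep the volume coefficient strictly negative, and absorb the $\Gamma_0$ boundary terms into the dissipation $\|\sqrt{d}\,u'\|_{0,\Gamma_0}^2$ plus a small multiple of $\mathcal{E}_{u_0}^2$. The only cosmetic difference is in the boundary bookkeeping: the paper applies Young directly as $2|u'(m\cdot\nabla\bar u)|\le \|m\|_\infty^2|u'|^2+|\nabla u|^2$ and lets the full $-(m\cdot\nu)|\nabla u|^2$ term absorb the gradient piece, whereas you first split $m\cdot\nabla u$ into tangential and normal components and absorb only $d|\nabla_T u|^2$; both arguments close in the same way.
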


\begin{proof}
Let $u_0\in V$ and set $u(t)=e^{tA_2}u_0$. Since  $\mathbf{a}=0$ on $\Gamma_0$, we have 
\[
\Re( \partial_\nu u(t)|m\cdot \nabla u(t))_{0,\Gamma_0}=-\Re( (m\cdot \nu)u'(t)|m\cdot \nabla u(t))_{0,\Gamma_0}.
\]
This inequality and \eqref{3.3} entail
\begin{align*}
&2\Re(\Delta_\mathbf{a}u, m\cdot \nabla u)_{0,\Omega}\le (n-2)(1+\delta(\|\mathbf{a}\|_\infty) )\|\nabla_\mathbf{a} u\|_{0,\Omega}^2
\\
& \hskip 3cm- 2\Re( (m\cdot \nu )u'(t)|m\cdot \nabla u(t))_{0,\Gamma_0}-\Re( m\cdot \nu ||\nabla u|^2)_{0,\Gamma_0} .
\end{align*}
Using this inequality in \eqref{3.1}, we get
\begin{align}
\frac{d}{dt}\mathscr{E}_{u_0}^2(t)&\le - \|\nabla_\mathbf{a} u(t)\|_{0,\Omega}^2- 2\Re( (m\cdot \nu )u'(t)|m\cdot \nabla u(t))_{0,\Gamma_0}\label{3.11}
\\
&\hskip 2cm -\Re( m\cdot \nu ||\nabla u|^2)_{0,\Gamma_0} -\Re( (n+i)u(t)|u'(t))_{0,\Gamma_0} ,\nonumber
\end{align}
provided that $\delta\le \frac{1}{n-2}$. This last condition is satisfied whenever $\|\mathbf{a}\|_\infty \le \varsigma$, for some $0<\varsigma \le \frac{1}{2\varkappa_1}$.

Define, for $\epsilon>0$,
\[
\mathcal{E}_{u_0}^{2,\epsilon} =\mathcal{E}_{u_0}^2+\epsilon \mathscr{E}_{u_0}^2.
\]
From inequality \eqref{3.11}, we have
\begin{align}
&\frac{d}{dt}\mathcal{E}_{u_0}^{2,\epsilon} (t)\le -\epsilon \mathcal{E}_{u_0}^2(t)- (m\cdot \nu ||u'(t)|^2)_{0,\Gamma_0}\label{3.12}
\\
&-\epsilon \left[ 2\Re( (m\cdot \nu )u'(t)|m\cdot \nabla u(t))_{0,\Gamma_0}+\Re( m\cdot \nu ||\nabla u|^2)_{0,\Gamma_0} +\Re( (n+i)u(t)|u'(t))_{0,\Gamma_0}\right].\nonumber
\end{align}
Let $\|\mathbf{tr}\|$ be the norm of the trace operator
\[
u\in V \rightarrow \sqrt{m\cdot \nu}\, u_{|\Gamma_0} \in L^2(\Gamma_0),
\]
when $V$ is endowed with the norm $\|\nabla_\mathbf{a}\cdot \|_{0,\Omega}$. Then
\begin{align}
\left|((m\cdot \nu )(n+i)u(t)|u'(t))_{0,\Gamma_0}\right|&\le \frac{\|\mathbf{tr}\|^2}{2}(n^2+1)\|\sqrt{m\cdot \nu}u'(t)\|_{0,\Gamma_0}^2\label{3.13}
\\
&\hskip 2cm+\frac{1}{2\|\mathbf{tr}\|^2}\|\sqrt{m\cdot \nu}u(t)\|_{0,\Gamma_0}^2 \nonumber
\\
&\le \frac{\|\mathbf{tr}\|}{2}(n^2+1)\|\sqrt{m\cdot \nu}u'(t)\|_{0,\Gamma_0}^2+\frac{1}{2}\|\nabla_\mathbf{a}u(t)\|_{0,\Omega}^2.\nonumber
\end{align}
Also,
\begin{equation}\label{3.14}
2\left|u'(t)(m\cdot \nabla \overline{u(t)})\right|\le \|m\|_\infty^2|u'(t)|^2+ |\nabla u(t)|^2.
\end{equation}
If $\vartheta =\frac{\|\mathbf{tr}\|}{2}(n^2+1)+\|m\|_\infty^2$, then inequalities \eqref{3.13} and \eqref{3.14} in \eqref{3.12} entail
\[
\frac{d}{dt}\mathcal{E}_{u_0}^{2,\epsilon} (t)\le -\frac{\epsilon}{2} \mathcal{E}_{u_0}^2(t)- (1-\epsilon \vartheta)\|\sqrt{m\cdot \nu} u'(t)\|_{0,\Gamma_0}^2.
\]
That is
\begin{equation}\label{3.15}
\frac{d}{dt}\mathcal{E}_{u_0}^{2,\epsilon} (t)\le -\frac{\epsilon}{2} \mathcal{E}_{u_0}^2(t)\;\; \mbox{if}\;\; 1-\epsilon \vartheta \ge 0.
\end{equation}
On the other hand, as
\[
\mathscr{E}_{u_0}^2(t)\le 2\varkappa_1 \|m\|_\infty \|\nabla_\mathbf{a} u(t)\|_{0,\Omega}^2=2\varkappa \|m\|_\infty \mathcal{E}_{u_0}^2(t),
\]
we have
\[
\mathcal{E}_{u_0}^{2,\epsilon} (t)\le (1+2\varkappa_1 \|m\|_\infty)\mathcal{E}_{u_0}^2(t).
\]
This in \eqref{3.15} yields
\[
\frac{d}{dt}\mathcal{E}_{u_0}^{2,\epsilon} (t)\le -\epsilon \mu \mathcal{E}_{u_0}^{2,\epsilon} (t),\;\; 0<\epsilon \le \epsilon_0=\frac{1}{\vartheta}.
\]
Here $\mu =\frac{1}{2+4\varkappa_1 \|m\|_\infty}$. Hence
\[
\mathcal{E}_{u_0}^{2,\epsilon} (t)\le e^{-\epsilon \mu t}\mathcal{E}_{u_0}^{2,\epsilon} (0)
\]
But
\[
(1-2\varkappa_1 \|m\|_\infty)\mathcal{E}_{u_0}^2(t)\le \mathcal{E}_{u_0}^{2,\epsilon} (t)\le (1+2\varkappa_1 \|m\|_\infty)\mathcal{E}_{u_0}^2(t).
\]
Therefore
\[
\mathcal{E}_{u_0}^2 (t)\le \frac{1+2\varkappa_1 \|m\|_\infty}{2}e^{-\epsilon \mu t}\mathcal{E}_{u_0}^2 (0),\;\; 0<\epsilon \le \min \left(\epsilon_0, \frac{1}{4\varkappa_1 \|m\|_\infty}\right).
\]
The proof is then complete.
\end{proof}

\section{Additional comments}

\subsection{Exponential stabilization via a Carleman inequality}
\setcounter{equation}{0}

In this subsection we show that we can retrieve the exponential stabilization result in Theorem \ref{theorem3.1} by using an argument based on a Carleman inequality.

Assume that $\omega$ can be chosen in such a way that there exists $\psi \in C^4(\overline{\Omega})$ satisfying
\[
\psi >0\; \mbox{in}\; \overline{\Omega},\quad \nabla \psi \neq 0\; \mbox{in}\; \overline{\Omega\setminus \omega},\quad \partial _\nu \psi \leq 0\; \mbox{on}\; \Gamma  
\]
and the following  pseudo-convexity condition: there exists $\varpi>0$ so that
\[
|\nabla \psi (x)\cdot \xi |^2+\nabla ^2\psi(x)\xi \cdot \overline{\xi} \ge \varpi |\xi|^2,\;\; x\in \overline{\Omega\setminus \omega},\; \xi \in \mathbb{C}^n.
\]
Here where $\nabla ^2\psi=(\partial_{ij}\psi )$.

Note that since $\nabla ^2\psi$ is symmetric, $\nabla^2\psi \xi \cdot \overline{\xi}$ is real.

We call this condition on $\omega$ by $(\mathcal{G})$.

\smallskip
Let us provide a domain $\omega$ obeying to condition $(\mathcal{G})$. In fact, any neighborhood  $\omega$ of $\Gamma$ in $\Omega$ possesses this property. To see that, pick $\omega$ a neighborhood  of $\Gamma$ in $\Omega$, $x_0$ an arbitrary point in $\mathbb{R}^n\setminus\overline{\Omega}$ and $0\le \chi \in C_0^\infty (\Omega )$ satisfying $\chi=1$ in a neighborhood of $\overline{\Omega\setminus \omega}$. Then it is obvious to check that $\psi(x)=1+\chi(x)|x-x_0|^2$ satisfies all the conditions listed in $(\mathcal{G})$. This construction can be improved to include domains satisfying the condition for the exponential stabilization discussed in the multiplier method. To this end, fix again $x_0$ an arbitrary point in $\mathbb{R}^n\setminus\overline{\Omega}$ and set
\[
\Gamma_0=\{x\in \Gamma ;\; \nu(x)\cdot (x-x_0)>0\}.
\] 
Pick $\omega$ a neighborhood of $\Gamma_0$ in $\Omega$ so that $\overline{\omega}\cap \Gamma_1=\emptyset$ and let $0\le \chi \in C_0^\infty (\Omega )$ with $\chi =1$ in a neighborhood of $\overline{\Omega\setminus \omega}$ and $\textrm{supp}(\chi )\cap \overline{\Gamma_0}=\emptyset$. A straightforward computations show that $\psi(x)=1+\chi(x)|x-x_0|^2$ fulfills condition $(\mathcal{G})$. 

Substituting, if necessary, $\psi$ by $\psi +C$, for some large constant $C$, we can assume that 
\[
\psi >\frac{2}{3}\|\psi \|_\infty \;\; \mbox{in}\;\; \overline{\Omega}.
\]
In the sequel, the two functions $\theta$ and $\varphi$, defined on $Q$, are given by
\[
\theta (x,t)=\frac{e^{\lambda \psi (x)}}{t(T-t)},\quad \varphi (x,t)=\frac{e^{2\lambda \|\psi\|_\infty}-e^{\lambda \psi (x)}}{t(T-t)}.
\]
Here $\lambda$ is a parameter to be specified later.

Let 
\[
\mathscr{H}= \{ w\in L^2((0,T),H_0^1(\Omega ));\;  i\partial _t+\Delta _\mathbf{a}\in L^2(Q)\}.
\]

A straightforward modification of the proof \cite[Corollary 3.2]{MOR} gives

\begin{lemma}\label{lemma4.1}
There are three constants $\lambda _0\geq 1$, $s_0\geq 1$ and $C_0>0$ such that for all $\lambda \geq \lambda _0$, $s\geq s_0$ and $w\in \mathscr{H}$, it holds
\begin{align*}
&\|\sqrt{\lambda s\theta}e^{-s\varphi}  \nabla_\mathbf{a} w\|_{0,Q}+\|\lambda ^2s\theta\sqrt{s\theta}e^{-s\varphi}w\|_{0,Q}
\\
&\quad \leq C_0\left( \|e^{-s\varphi}( i\partial _t+\Delta _\mathbf{a})w\|_{0,Q}+\|\sqrt{\lambda s\theta}e^{-s\varphi}  \nabla_\mathbf{a} w\|_{0,Q_\omega}+\|\lambda ^2s\theta\sqrt{s\theta}e^{-s\varphi}w\|_{0,Q_\omega}\right).
\end{align*}
\end{lemma}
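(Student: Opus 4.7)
The plan is to reduce to the non-magnetic Carleman estimate in \cite[Corollary 3.2]{MOR}, which states the analogous inequality for $i\partial_t+\Delta$ with $\nabla$ in place of $\nabla_\mathbf{a}$. Starting from a given $w\in\mathscr{H}$ with $f=(i\partial_t+\Delta_\mathbf{a})w\in L^2(Q)$, I rewrite
\[
(i\partial_t+\Delta)w = f - 2i\mathbf{a}\cdot\nabla w -i\,\mathrm{div}(\mathbf{a})w+|\mathbf{a}|^2 w,
\]
so that $\|e^{-s\varphi}(i\partial_t+\Delta)w\|_{0,Q}$ is controlled, up to an absolute constant depending on $\|\mathbf{a}\|_\infty$ and $\|\mathrm{div}\,\mathbf{a}\|_\infty$ (finite since $\mathbf{a}\in W^{1,\infty}$), by $\|e^{-s\varphi}f\|_{0,Q}+\|e^{-s\varphi}\nabla w\|_{0,Q}+\|e^{-s\varphi}w\|_{0,Q}$. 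Applying the non-magnetic Carleman inequality to $w$ with this right-hand side yields
\begin{align*}
&\|\sqrt{\lambda s\theta}\,e^{-s\varphi}\nabla w\|_{0,Q}+\|\lambda^2 s\theta\sqrt{s\theta}\,e^{-s\varphi}w\|_{0,Q}\\
&\qquad\lesssim \|e^{-s\varphi}f\|_{0,Q}+\|e^{-s\varphi}\nabla w\|_{0,Q}+\|e^{-s\varphi}w\|_{0,Q}\\
&\qquad\qquad+\|\sqrt{\lambda s\theta}\,e^{-s\varphi}\nabla w\|_{0,Q_\omega}+\|\lambda^2 s\theta\sqrt{s\theta}\,e^{-s\varphi}w\|_{0,Q_\omega}.
\end{align*}

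Next I absorb the two global $w$-terms on the right. Since $\psi>\frac23\|\psi\|_\infty$ on $\overline{\Omega}$, there is a constant $\theta_\ast>0$ with $\theta\geq \theta_\ast$ on $Q$, so that
\[
\|e^{-s\varphi}\nabla w\|_{0,Q}\leq (\lambda s\theta_\ast)^{-1/2}\|\sqrt{\lambda s\theta}\,e^{-s\varphi}\nabla w\|_{0,Q},\quad \|e^{-s\varphi}w\|_{0,Q}\leq (\lambda^2 s\theta_\ast\sqrt{s\theta_\ast})^{-1}\|\lambda^2 s\theta\sqrt{s\theta}\,e^{-s\varphi}w\|_{0,Q}.
\]
For $\lambda\geq \lambda_0'$ and $s\geq s_0'$ both large enough (depending only on $\|\mathbf{a}\|_{W^{1,\infty}}$, $\psi$, and $T$), these two terms are absorbed by the left-hand side, giving the desired estimate with $\nabla$ instead of $\nabla_\mathbf{a}$ on both sides.

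Finally I trade $\nabla$ for $\nabla_\mathbf{a}$ via the pointwise bounds $|\nabla w|\leq |\nabla_\mathbf{a}w|+|\mathbf{a}||w|$ and $|\nabla_\mathbf{a}w|\leq |\nabla w|+|\mathbf{a}||w|$. On the left one increases the gradient term, on the right one decreases it, and the discrepancy is an additional $\|\mathbf{a}\|_\infty \|\sqrt{\lambda s\theta}\,e^{-s\varphi}w\|_{0,Q}$ (global and local) which is bounded by $C\lambda^{-3/2}(s\theta_\ast)^{-1}\|\lambda^2 s\theta\sqrt{s\theta}\,e^{-s\varphi}w\|$, again absorbable for $\lambda,s$ large. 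The main obstacle to watch is that the first-order perturbation $2i\mathbf{a}\cdot\nabla w$ has the \emph{same} differential order as one of the quantities we control on the left; the gain needed for absorption is the small factor $(\lambda s\theta_\ast)^{-1/2}$ produced by the Carleman weight, so the quantitative thresholds $\lambda_0,s_0$ must be chosen strictly larger than those in \cite{MOR} and depending on $\|\mathbf{a}\|_{W^{1,\infty}}$. Once the thresholds are fixed this way, everything collapses to the claimed inequality, and $C_0$ is obtained as an explicit multiple of the constant in \cite[Corollary 3.2]{MOR}.
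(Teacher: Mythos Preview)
Your proof is correct. The paper itself gives no detailed argument for this lemma, only the sentence ``A straightforward modification of the proof of \cite[Corollary 3.2]{MOR} gives'' the result. Your perturbation argument --- applying the non-magnetic Carleman estimate of \cite{MOR} as a black box, rewriting $\Delta_\mathbf{a}=\Delta+2i\mathbf{a}\cdot\nabla+i\,\mathrm{div}(\mathbf{a})-|\mathbf{a}|^2$, and absorbing the first-order remainder via the gain $(\lambda s\theta_\ast)^{-1/2}$ coming from the weight --- is precisely the kind of straightforward modification the authors have in mind, and every absorption step you describe goes through since $\theta\ge 4e^{\lambda\min_\Omega\psi}/T^2>0$ uniformly on $Q$.
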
 

Pick $u_0\in D(A_0)$ and let $u(t)=e^{tA_0}u_0$. Taking into account that
\[
\|u(t)\|_0=\|u_0\|_{0,\Omega}\;\; \mbox{and}\;\; \|\nabla_\mathbf{a}u(t)\|_{0,\Omega}=\|\nabla_\mathbf{a}u_0\|_{0,\Omega},
\]
we obtain by applying Lemma \ref{lemma4.1} the following observability inequality

\begin{corollary}\label{corollary4.1}
There exists a constant $C>0$, depending on $\Omega$, $\mathbf{a}$, $\omega$ and $T$, so that for any $u_0\in D(A_0)$ and $u(t)=e^{tA_0}u_0$, we have
\begin{equation}\label{4.1}
\|\nabla_\mathbf{a}u_0\|_{0,\Omega}\le C\left(\|\nabla_\mathbf{a}u\|_{0,Q_\omega}+\|u\|_{0,Q_\omega}\right).
\end{equation}
\end{corollary}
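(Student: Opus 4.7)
The plan is to apply the Carleman inequality of Lemma \ref{lemma4.1} directly to $w=u$, where $u(t)=e^{tA_0}u_0$ with $u_0\in D(A_0)$. Since $D(A_0)=H^2(\Omega)\cap H_0^1(\Omega)$, one has $u\in C([0,T],D(A_0))\cap C^1([0,T],L^2(\Omega))$, so in particular $u\in L^2((0,T),H_0^1(\Omega))$ and $(i\partial_t+\Delta_\mathbf{a})u=0$; hence $u\in\mathscr{H}$ and the source term in the right-hand side of Lemma \ref{lemma4.1} vanishes. From now on, I fix $\lambda=\lambda_0$ and $s=s_0$ provided by that lemma.

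The key analytic step is to control the weights on $\overline{Q}$. Thanks to the normalization $\psi>\frac{2}{3}\|\psi\|_\infty$, one has $e^{2\lambda\|\psi\|_\infty}-e^{\lambda\psi(x)}\ge\kappa>0$ uniformly in $x\in\overline{\Omega}$, so that $\varphi(x,t)\ge\kappa/[t(T-t)]$. The factor $e^{-s\varphi}$ therefore decays super-polynomially as $t\to0^+$ or $t\to T^-$, dominating the singularity of $\theta\le e^{\lambda\|\psi\|_\infty}/[t(T-t)]$ at the endpoints. I thus obtain a constant $M=M(s_0,\lambda_0,\psi,T)$ such that
\[
\sqrt{\lambda_0 s_0\theta}\,e^{-s_0\varphi}\le M,\qquad \lambda_0^2s_0\theta\sqrt{s_0\theta}\,e^{-s_0\varphi}\le M\quad\text{on }\overline{Q}.
\]
Inserted into the right-hand side of Lemma \ref{lemma4.1}, this gives
\[
\|\sqrt{\lambda_0 s_0\theta}\,e^{-s_0\varphi}\nabla_\mathbf{a}u\|_{0,Q_\omega}+\|\lambda_0^2s_0\theta\sqrt{s_0\theta}\,e^{-s_0\varphi}u\|_{0,Q_\omega}\le M\left(\|\nabla_\mathbf{a}u\|_{0,Q_\omega}+\|u\|_{0,Q_\omega}\right).
\]

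To bound the left-hand side from below, I would restrict the time integration to the middle interval $[T/4,3T/4]$. On the strip $[T/4,3T/4]\times\overline{\Omega}$, both $\theta$ and $\varphi$ are continuous and bounded, so $\sqrt{\lambda_0 s_0\theta}\,e^{-s_0\varphi}\ge m>0$ there. Using the conservation property $\|\nabla_\mathbf{a}u(\cdot,t)\|_{0,\Omega}=\|\nabla_\mathbf{a}u_0\|_{0,\Omega}$ (already exploited in the proof of Corollary \ref{corollary3.1}), I obtain
\[
\|\sqrt{\lambda_0 s_0\theta}\,e^{-s_0\varphi}\nabla_\mathbf{a}u\|_{0,Q}^2\ge m^2\int_{T/4}^{3T/4}\|\nabla_\mathbf{a}u_0\|_{0,\Omega}^2\,dt=\frac{m^2T}{2}\|\nabla_\mathbf{a}u_0\|_{0,\Omega}^2.
\]
Combining with the previous bound and Lemma \ref{lemma4.1} yields \eqref{4.1}. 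There is no serious obstacle; the only quantitative point to verify is the super-polynomial decay of $e^{-s\varphi}$ beating the powers of $\theta$, which is the standard mechanism in Carleman-based observability arguments.
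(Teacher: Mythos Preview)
Your proof is correct and follows exactly the approach the paper indicates: apply Lemma~\ref{lemma4.1} to $w=u$ (so the source term vanishes), use the boundedness of the Carleman weights on $\overline{Q}$ to control the right-hand side, and exploit the conservation law $\|\nabla_\mathbf{a}u(\cdot,t)\|_{0,\Omega}=\|\nabla_\mathbf{a}u_0\|_{0,\Omega}$ together with a lower bound on the weights on a compact time slab to control the left-hand side from below. The paper compresses this entire argument into one sentence, and you have supplied the standard details (in particular the super-polynomial decay of $e^{-s_0\varphi}$ at $t=0,T$ dominating the powers of $\theta$, and the positive lower bound on $[T/4,3T/4]$); there is nothing missing.
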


This observability inequality at hand, we can proceed similarly to the proof of Theorem \ref{theorem3.1} in order to get the following theorem.

\begin{theorem}\label{theorem4.1}
Let $\omega$ be a neighborhood of $\omega_0$ in $\Omega$, where $\omega_0$ obeys to the condition $(\mathcal{G})$. Then there exists a constant $\varrho>0$, depending only on $\Omega$, $T$, $\omega$, so that
\[
\mathcal{E}_{u_0}^1(t)\le e^{-\varrho t}\mathcal{E}_{u_0}^1(0),\;\; u_0\in L^2(\Omega ).
\]
\end{theorem}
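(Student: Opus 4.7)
The plan is to follow the same structure as the proof of Theorem \ref{theorem3.1}, replacing the multiplier-based observability of Proposition \ref{proposition3.3} by a pure $L^2$ observability deduced from the Carleman estimate of Lemma \ref{lemma4.1} (or from Corollary \ref{corollary4.1}). First I would reduce to $u_0\in D(A_0)$ by density, and decompose $u(t)=e^{tA_1}u_0=v(t)+w(t)$ with
\[
v(t)=e^{tA_0}u_0,\qquad w(t)=-i\int_0^t e^{(t-s)A_0}c\,u(s)\,ds,
\]
so that $v$ solves the undamped magnetic Schr\"odinger equation while $w$ collects the contribution of the damping.

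To produce the required observability, I would apply Lemma \ref{lemma4.1} to $v$: since $(i\partial_t+\Delta_\mathbf{a})v=0$, the source term on the right-hand side of the Carleman estimate disappears. Restricting the $L^2$-type term on the left to a sub-interval of $(0,T)$ on which the weights $\theta$ and $\varphi$ are bounded above and below, and using the conservation of the $L^2$ norm $\|v(t)\|_{0,\Omega}=\|u_0\|_{0,\Omega}$, this yields
\[
\|u_0\|_{0,\Omega}\le C\bigl(\|v\|_{0,Q_\omega}+\|\nabla_\mathbf{a}v\|_{0,Q_\omega}\bigr).
\]
Next, mimicking the Caccioppoli step in the proof of Proposition \ref{proposition3.3}, I would absorb $\|\nabla_\mathbf{a}v\|_{0,Q_{\omega'}}$ by $\|v\|_{0,Q_\omega}+\|\partial_tv\|_{L^2((0,T),H^{-1}(\omega))}$ on a slightly smaller open set $\omega'\Subset\omega$, and then close the loop by the same interpolation argument used at the end of the proof of Proposition \ref{proposition3.3}. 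This gives the clean $L^2$ observability
\[
\|u_0\|_{0,\Omega}\le C\|v\|_{0,Q_\omega}.
\]

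With this observability in hand, the conclusion is obtained exactly as in the proof of Theorem \ref{theorem3.1}. Since $t\mapsto\mathcal{E}^1_{u_0}(t)$ is non-increasing and $\mathcal{E}^1_{u_0}(0)=\frac{1}{2}\|u_0\|_{0,\Omega}^2$, one writes
\[
\mathcal{E}^1_{u_0}(t)\le C\|v\|_{0,Q_\omega}^2\le C\|\sqrt{c}\,u\|_{0,Q}^2+C\|w\|_{0,Q}^2,
\]
where the first inequality uses the observability and the second the triangle inequality together with $c\ge c_0>0$ a.e.\ on $\omega$. Combining with the elementary estimate $\|w\|_{0,Q}^2\le \|c\|_\infty\|\sqrt{c}\,u\|_{0,Q}^2$ yields
\[
\mathcal{E}^1_{u_0}(t)\le C\|\sqrt{c}\,u\|_{0,Q}^2=-C\frac{d}{dt}\mathcal{E}^1_{u_0}(t),
\]
and Gronwall's lemma delivers the exponential decay with a rate $\varrho$ depending only on $\Omega$, $T$, $\omega$ and the constants in $(A_c)$.

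The main obstacle is the conversion of the natural Carleman observability, which mixes $L^2$ and $H^1$ norms on $Q_\omega$, into a clean $L^2$ observability. This is precisely the role of the flexibility in the assumption that $\omega$ be a \emph{neighborhood} of a set $\omega_0$ satisfying $(\mathcal{G})$: one applies the Carleman estimate on a smaller subdomain $\omega_0$ and uses the surrounding collar to perform Caccioppoli and interpolation, much as in the proof of Proposition \ref{proposition3.3}. Once this technical reduction is in place, the rest of the argument is a verbatim copy of Theorem \ref{theorem3.1}.
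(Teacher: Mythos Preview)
Your proposal is correct and follows essentially the same approach as the paper. The paper's own proof is extremely brief—it simply says that with Corollary~\ref{corollary4.1} in hand one proceeds ``similarly to the proof of Theorem~\ref{theorem3.1}''—and you have correctly unpacked what that entails: convert the mixed $L^2$/$H^1$ observability coming from the Carleman estimate into the pure $L^2$ observability $\|u_0\|_{0,\Omega}\le C\|v\|_{0,Q_\omega}$ (exactly the inequality highlighted in Remark~\ref{remark4.1}) via Caccioppoli and interpolation on the collar $\omega\setminus\omega_0$, and then repeat verbatim the decomposition $u=v+w$ and the dissipation argument of Theorem~\ref{theorem3.1}.
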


Clearly, from the previous discussion, Theorem \ref{theorem4.1} improve Theorem \ref{theorem3.1}. However we do not know whether we can construct a domain $\omega$ obeying to condition $(\mathcal{G})$ but doesn't satisfy the assumption in Theorem \ref{theorem3.1}.

\begin{remark}\label{remark4.1}
{\rm
As in Theorem \ref{theorem3.1}, one step in the proof consists in establishing the following observability inequality
\[
\|u_0\|_{0,\Omega}\le C\|e^{tA_0}u_0\|_{0,Q_\omega},\;\; u_0\in L^2(\Omega ).
\]
According to \cite[Theorem 5.1]{Miller:2005},  
under the assumption of Theorem \ref{theorem4.1},  this inequality is equivalent to the following the so-called observability resolvent estimate: there exists two constants $\aleph_0$ and $\aleph_1$, depending on $\Omega$, $\omega$ and $\mathbf{a}$ so that, for any $\mu \in \mathbb{R}$ and $u\in D(A_0)$, we have
\[
\|u\|_{0,\Omega}^2\le \aleph_0\|(A_0-i\mu)u\|_0^2+\aleph_1\|u\|_{0,\omega}^2.
\]
}
\end{remark}

\subsection{Observability inequality in a product space}

We consider the case in which $\Omega =\Omega_1\times \Omega_2$, with $\Omega_j$ a $C^\infty$ bounded domain of $\mathbb{R}^{n_j}$, $j=1,2$ and $n_1+n_2=n$. Assume that
\[
\mathbf{a}(x_1,x_2)=(\mathbf{a_1}(x_1),\mathbf{a}_2(x_2))\in \mathbb{R}^{n_1}\oplus\mathbb{R}^{n_2},\;\; (x_1,x_2)\in \Omega .
\]
where $\mathbf{a}_j$ satisfies the same assumptions as $\mathbf{a}$ when $\Omega$ is substituted by $\Omega_j$, $j=1,2$. Denote by $A_{0,j}$ the operator $A_0$ when $\Omega=\Omega_j$ and $\mathbf{a}$ is substituted by $\mathbf{a}_j$, $j=1,2$.

For $u_{0,j}\in L^2 (\Omega _j)$, $j=1,2$, it is not hard to check that
\begin{equation}\label{4.2}
e^{tA_0}(u_{0,1}\otimes u_{0,2})=e^{tA_{0,1}}u_{0,1}\otimes e^{tA_{0,2}}u_{0,2}.
\end{equation}

Let $\omega_1$ be an open subset of $\Omega_1$, $Q_{\omega_1}=\omega_1\times (0,T)$, $\omega =\omega_1\times \Omega_2$ and $Q_\omega=\omega\times (0,T)$.

Following a simple idea in \cite{burq2}, we have

\begin{theorem}\label{theorem4.2}
Assume that there exists a constant $C>0$ so that the following observability inequality holds
\begin{equation}\label{4.3}
\|u_{0,1}\|_{0,\Omega_1}^2\le C\|e^{tA_{0,1}}u_{0,1}\|_{0,Q_{\omega_1}},\;\; u_{0,1}\in L^2(\Omega _1).
\end{equation}
Then
\[
\|u_0\|_{0,\Omega}^2\le C\|e^{tA_0}u_0\|_{0,Q_\omega},\;\; u_0\in L^2(\Omega ).
\]
\end{theorem}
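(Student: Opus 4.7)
The plan is to exploit the tensor product structure $L^{2}(\Omega)\simeq L^{2}(\Omega_{1})\otimes L^{2}(\Omega_{2})$ together with the spectral decomposition of $A_{0,2}$, following the scheme of Burq--Zworski \cite{burq2}. Since $-\Delta_{\mathbf{a}_{2}}$ on $\Omega_{2}$ with Dirichlet boundary condition is self-adjoint with compact resolvent, there exists an orthonormal basis $(\phi_{k})_{k\ge 1}$ of $L^{2}(\Omega_{2})$ and real numbers $\mu_{k}\ge 0$ such that $A_{0,2}\phi_{k}=-i\mu_{k}\phi_{k}$, whence $e^{tA_{0,2}}\phi_{k}=e^{-i\mu_{k}t}\phi_{k}$.

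Given $u_{0}\in L^{2}(\Omega)$, I would decompose it in the basis $(\phi_{k})$: writing
\[
v_{k}(x_{1})=\int_{\Omega_{2}}u_{0}(x_{1},x_{2})\overline{\phi_{k}(x_{2})}\,dx_{2},
\]
we obtain $u_{0}=\sum_{k\ge 1}v_{k}\otimes\phi_{k}$ in $L^{2}(\Omega)$ with $v_{k}\in L^{2}(\Omega_{1})$, and Parseval gives $\|u_{0}\|_{0,\Omega}^{2}=\sum_{k}\|v_{k}\|_{0,\Omega_{1}}^{2}$. Extending \eqref{4.2} by linearity and density (and using continuity of $e^{tA_{0}}$ on $L^{2}(\Omega)$ together with the fact that $|e^{-i\mu_{k}t}|=1$ yields uniform-in-$t$ control of the partial sums), one gets
\[
(e^{tA_{0}}u_{0})(x_{1},x_{2})=\sum_{k\ge 1}e^{-i\mu_{k}t}(e^{tA_{0,1}}v_{k})(x_{1})\,\phi_{k}(x_{2}).
\]

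Next, I would compute $\|e^{tA_{0}}u_{0}\|_{0,Q_{\omega}}^{2}$. Integrating first over $x_{2}\in\Omega_{2}$ and using orthonormality of $(\phi_{k})$, the cross terms vanish (the oscillating factors $e^{-i(\mu_{k}-\mu_{j})t}$ are multiplied by zero after $x_{2}$-integration), so
\[
\int_{\Omega_{2}}|(e^{tA_{0}}u_{0})(x_{1},x_{2})|^{2}\,dx_{2}=\sum_{k\ge 1}|(e^{tA_{0,1}}v_{k})(x_{1})|^{2},
\]
and integrating over $\omega_{1}\times(0,T)$ gives the clean identity
\[
\|e^{tA_{0}}u_{0}\|_{0,Q_{\omega}}^{2}=\sum_{k\ge 1}\|e^{tA_{0,1}}v_{k}\|_{0,Q_{\omega_{1}}}^{2}.
\]

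Finally, I would apply the assumed observability inequality \eqref{4.3} (read with square on the right-hand side, the statement as displayed appears to have a typographic mismatch of exponents) to each $v_{k}\in L^{2}(\Omega_{1})$, obtaining
\[
\|u_{0}\|_{0,\Omega}^{2}=\sum_{k\ge 1}\|v_{k}\|_{0,\Omega_{1}}^{2}\le C\sum_{k\ge 1}\|e^{tA_{0,1}}v_{k}\|_{0,Q_{\omega_{1}}}^{2}=C\|e^{tA_{0}}u_{0}\|_{0,Q_{\omega}}^{2},
\]
which is the desired conclusion. There is no real obstacle beyond justifying the convergence of the series in the various $L^{2}$-spaces; this is straightforward because the partial sums of $u_{0}$ converge in $L^{2}(\Omega)$ and $e^{tA_{0}}$ is a contraction (indeed unitary) on $L^{2}(\Omega)$, so passing to the limit under the norms and under the integral over $Q_{\omega}$ causes no difficulty.
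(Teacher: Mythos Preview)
Your proof is correct and follows essentially the same route as the paper's own argument: spectral decomposition in the $\Omega_2$-variable via an orthonormal basis of eigenfunctions of $A_{0,2}$, the tensor product formula \eqref{4.2} extended by linearity, Parseval in $L^2(\Omega_2)$ to decouple the modes, and then the one-dimensional observability \eqref{4.3} applied term by term. Your remark on the missing square on the right-hand side of \eqref{4.3} is apt; the paper's proof also uses the squared norm there.
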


\begin{proof}
Denote by $(\phi_k)_{k\ge 1}$ an orthonormal basis consisting of eigenfunctions of $A_{0,2}$. For $u_0\in L^2(\Omega )$, we have
\[
u_0(x_1,x_2)=\sum_{k\ge 1}\psi_k(x_1)\phi_k(x_2).
\]
Here
\[
\psi_k(x_1)=(u(x_1,\cdot )|\phi_k)_{0,\Omega_2}\in L^2(\Omega_1),\;\; k\ge 1
\]

In light of \eqref{4.2} we have, where $(i\lambda_k)\subset i\mathbb{R}$ is the sequence of eigenvalues of $A_{0,2}$,
\[
e^{tA_0}u_0(x_1,x_2)=\sum_{k\ge 1}e^{i\lambda_kt}e^{tA_{0,1}}\psi_k(x_1)\phi_k(x_2).
\]
We get by applying Parseval's identity
\[
\|e^{tA_0}u_0\|_{0,\Omega}^2=\sum_{k\ge 1}\|e^{tA_{0,1}}\psi_k\|_{0,\Omega_1}^2=\sum_{k\ge 1}\|\psi_k\|_{0,\Omega_1}^2=\|u_0\|_{0,\Omega}^2
\]
and
\begin{equation}\label{4.4}
\|e^{tA_0}u_0\|_{0,\omega}^2=\sum_{k\ge 1}\|e^{tA_{0,1}}\psi_k\|_{0,\omega_1}^2.
\end{equation}
On other hand, apply  observability inequality \eqref{4.3} in order to obtain
\[
\|u_0\|_{0,\Omega}^2=\sum_{k\ge 1}\|\psi_k\|_{0,\Omega_1}^2\le C\sum_{k\ge 1}\|e^{tA_{0,1}}\psi_k\|_{0,Q_{\omega_1}}^2.
\]
This and \eqref{4.4} entail
\[
\|u_0\|_{0,\Omega}^2\le C\|e^{tA_0}u_0\|_{0,Q_{\omega}}^2.
\]
This is the expected inequality.
\end{proof}

\textbf{Acknowledgements} We would like to thank the referees for their valuable comments which enabled us to improve  the paper.



\begin{thebibliography}{99}

\bibitem{AFM} {\sc N. Anantharaman, C. Fermanian-Kammerer} and {\sc F. Maci\`a}, 
{\em Semiclassical completely integrable systems : long-time dynamics and observability 
via two-microlocal Wigner measures},  Amer. J. Math. 137 (3) (2015), 577-638.

\bibitem{ALM} {\sc N. Anantharaman, M. L\'eautaud} and {\sc F. Maci\`a}. 
{\em Wigner measures and observability for the Schro\"odinger equation on the disk}, 
Inventiones Mathematicae. 206 (2) (2016), 485-599.

\bibitem{AM}{\sc N. Anantharaman} and {\sc F. Maci\`a}, {\em Semiclassical measures 
for the Schr\"odinger equation on the torus},  J. European Math. Soc.  16 (6) (2014), 1253-1288.

\bibitem{AL} {\sc N. Anantharaman} and {\sc M. L\'eautaud,} {\em Sharp polynomial decay 
rates for the damped wave equation on the torus. With an appendix by St\'ephane 
Nonnenmacher},  Anal. PDE. 7 (2014), 159-214. 

\bibitem{nalini3} {\sc N. Anantharaman} and  {\sc G. Rivi\`ere,} {\em Dispersion and controllability 
for the Schr\"odinger equation
on negatively curved manifolds}, Anal. PDE. 5 (2012), 313-338. 

\bibitem{blr} {\sc C. Bardos, G. Lebeau} and {\sc J. Rauch,} {\em Sharp sufficient conditions 
for the observation, control, and stabilization of waves from the boundary}, 
SIAM J. Control Optim. 30 (1992), 1024-1065.

\bibitem{batty} {\sc C. J. K. Batty} and {\sc T. Duyckaerts,}
{\em Non-uniform stability for bounded semi-groups on Banach spaces},
J. Evol. Equ. 8 (2008), 765-780.

\bibitem{BC}{\sc M. Bellassoued} and {\sc M. Choulli}, {\em Stability estimate for an 
inverse problem for the magnetic Schr\"odinger equation from the Dirichlet-to-Neumann map}, 
J. Funct. Anal. 258 (2010) 161-195.

\bibitem{burq1} {\sc N. Burq} and {\sc M. Zworski,} {\em Control for Schr\"odinger operators on tori}, 
Math. Res. Lett. 19 (2012), 309-324.
 
\bibitem{burq2} {\sc N. Burq} and {\sc M. Zworski,} {\em Geometric control in the presence of a 
black box}, J.  Amer. Math. Soc. 17 (2004), 443-471.
 
\bibitem{burq3} {\sc N. Burq,} {\em D\'ecroissance de l'\'energie locale de l'\'equation des 
ondes pour le probl\`eme ext\'erieur et absence de r\'esonance au voisinage du r\'eel}, Acta 
Math. 180 (1998), 1-29.

\bibitem{Ch2002} {\sc M. Choulli}, {\em Stability estimates for an inverse elliptic problem}, 
J. Inverse Ill-Posed Probl. 10 (6) (2002), 601-610.

\bibitem{DGL} {\sc B. Dehman, P. G\'erard} and {\sc G. Lebeau}, {\em Stabilization and control 
for the nonlinear Schr\"odinger equation on a compact surface},
Math. Z. 254 (4) (2006), 729-749.

\bibitem{EL} {\sc M. J. Esteban} and {\sc P. L. Lions}, {\em Stationary solutions of nonlinear Schr\"odinger equations with an external magnetic field}, Partial differential equations and the calculus of variations, Vol. I, 401-449, Progr. Nonlinear Differential Equations Appl., 1, Birkh\" auser Boston, Boston, MA, 1989.

\bibitem{fursikov} {\sc A. Fursikov} and {\sc O. Yu. Imanuvilov,} {\em Controllability of 
evolution equations,} Lecture notes, 34, 
Seoul National University, Korea, 1996.

\bibitem{gr} {\sc P. Grisvard,} {\em Contr\^olabilit\'e exacte des solutions de l'\'equation 
des ondes en pr\'esence de singularit\'es}, J. Math. Pures Appl. 68 (1989), 215-259.

\bibitem{Haraux} {\sc A. Haraux},  {\em S\'eries lacunaires et contr\^ole semi-interne des vibrations d'une plaque rectangulaire}, J. de Math. Pures Appl.  68 (4), (1990) 457-465.


\bibitem{Hormander} {\sc L. H\"ormander,}  {\em Linear partial differential operators,} 
Springer-Verlag, Berlin, 1963. 

\bibitem{Jaffard} {\sc S. Jaffard}, {\em Contr\^ole interne exact des vibrations d'une plaque rectangulaire}, Portugal. Math.  47(4) (1990), 423-429.


\bibitem{zk} {\sc V. Komornik} and {\sc E. Zuazua,} {\em A direct method for the boundary 
stabilization of the wave equation}, J. Math. Pures et Appl. 69 (1990), 33-54.


\bibitem{laurent} {\sc C. Laurent}, {\em Internal control of the Schr\"odinger equation}, 
Math. Control Relat. Fields. 4 (2) (2014), 161-186.

\bibitem{lebeau} {\sc G. Lebeau,} {\em Contr\^ole de l'\'equation de Schr\"odinger}, 
J. Math. Pures Appl. 71 (1992), 267-291. 

\bibitem{Lebeau-Rob95} {\sc G.~Lebeau} and {\sc L.~Robbiano},  {\em Contr\^ole exact de l'\'equation de la chaleur},
 Commun. Partial Differ. Equat. 20 (1995), 335--356.

\bibitem{Lebeau-Rob97} {\sc G.~Lebeau} and {\sc L.~Robbiano},  {\em Stabilisation de l'\'equation des ondes par le bord}, 
Duke Math. J. 86 (1997), 465-491.

\bibitem{mach} {\sc E. Machtyngier,} {\em Exact controllability for the Schr\"odinger equation},
 SIAM. J. Control Optim. 32 (1994), 24--34.

\bibitem{zuazua} {\sc E. Machtyngier and E. Zuazua,} {\em Stabilization of the 
Schr\"odinger equation}, Portugal. Math.
51 (1994), 244-256.

\bibitem{macia} {\sc F. Maci\`a}, {\em Semiclassical measures and the Schr\"odinger flow 
on Riemannian manifolds} Nonlinearity 22 (2009) , 1003-1020.

\bibitem{MR1} {\sc F. Maci\`a} and {\sc G. Rivi\`ere}, {\em Concentration and non-concentration 
for the Schr\"odinger evolution on Zoll manifolds}, Commun. Math. Phy. 345 (3) 
(2016), 1019-1054.

\bibitem{MR2} {\sc F. Maci\`a} and {\sc G. Rivi\`ere}, {\em Observability and Quantum 
Limits for the Schr\"odinger equation on $\mathbb{S}^d$}. Preprint (2016).


\bibitem{fabricio} {\sc F. Maci\`a,} {\em High-frequency propagation for the Schr\"odinger 
equation on the torus}, J. Funct. Anal.
258 (2010), 933-955.

\bibitem{MOR} {\sc A. Mercado, A. Osses} and {\sc L. Rosier}, {\em Inverse problems 
for the Sch\"odinger equation via Carleman inequalities with degenerate weights},  
Inverse Problems. 24 (2008) 015017 (18pp). 

\bibitem{Miller:2005} {\sc L. Miller,} {\em Controllability cost of conservative systems: 
resolvent condition and transmutation}, J. Funct. Anal. 218 (2005),  425-444.

\bibitem{miller}  {\sc L. Miller,} {\em Non-structural controllability of linear elastic systems 
with structural damping}, J. Funct. Anal.  236 (2006), 592--608.


\bibitem{phung}  {\sc K.-D. Phung,} {\em Observability and control of Schr\"odinger 
equations}, SIAM J. Control Optim. 40 (2001), 211-230.

\bibitem{RTTT} {\sc K. Ramdani, T. Takahashi, G. Tenenbaum} and {\sc M. Tucsnak}, 
{\em A spectral approach for the exact observability of infinite-dimensional systems with 
skew-adjoint generator}, J. Funct. Anal. 226 (1) (2005),193-229.

\bibitem{Ro}{\sc L. Robbiano}, {\em Fonction de co\^ut et contr\^ole des solutions des 
\'equations hyperboliques}, Asymp. Anal. 10 (1995), 95-115.

\bibitem{tucsnakbook} {\sc M. Tucsnak} and {\sc G. Weiss}, {\em Observation and control for 
operator semigroups.} Birkh\"auser Advanced Texts, Birkh\"auser Verlag, Basel, 2009.

\end{thebibliography}
\end{document}